\theoremstyle{thmstyleone}%
\newtheorem{theorem}{Theorem}[section]
\newtheorem{proposition}[theorem]{Proposition}
\newtheorem{lemma}[theorem]{Lemma}%
\newtheorem{corollary}[theorem]{Corollary}
\theoremstyle{thmstyletwo}%
\newtheorem{remark}{Remark}[section]%
\theoremstyle{thmstylethree}%
\newtheorem{assumption}{Assumption}
\newtheorem{definition}{Definition}[section]%
\numberwithin{equation}{section}
\begin{document}

\title[A consensus-based optimization method for nonsmooth nonconvex programs]{A consensus-based optimization method for nonsmooth nonconvex programs with approximated gradient descent scheme}


\author[1]{\fnm{Jiazhen} \sur{Wei}}\email{jiazhenwei98@163.com}

\author[1]{\fnm{Fan} \sur{Wu}}\email{wufanmath@163.com}

\author*[1]{\fnm{Wei} \sur{Bian}}\email{bianweilvse520@163.com}

\affil[1]{\orgdiv{School of Mathematics}, \orgname{Harbin Institute of Technology}, \orgaddress{\city{Harbin}, \country{China}}}


\abstract{
In this paper, we are interested in finding the global minimizer of a nonsmooth nonconvex unconstrained optimization problem. By combining the discrete consensus-based optimization (CBO) algorithm and the gradient descent method, we develop a novel CBO algorithm with an extra gradient descent scheme evaluated by the forward-difference technique on the function values, where only the objective function values are used in the proposed algorithm. First, we prove that the proposed algorithm can exhibit global consensus in an exponential rate in two senses and possess a unique global consensus point. Second, we evaluate the error estimate between the objective function value on the global consensus point and its global minimum. In particular, as the parameter $\beta$ tends to $\infty$, the error converges to zero and the convergence rate is $\mathcal{O}\left(\frac{\log\beta}{\beta} \right)$. Third, under some suitable assumptions on the objective function, we provide the number of iterations required for the mean square error in expectation to reach the desired accuracy. It is worth underlining that the theoretical analysis in this paper does not use the mean-field limit.
Finally, we illustrate the improved efficiency and promising performance of our novel CBO method through some experiments on several nonconvex benchmark problems and the application to train deep neural networks.} 

\keywords{Nonsmooth nonconvex program, global optimization, consensus-based optimization method, approximated gradient descent method, error estimate}



\maketitle

\section{Introduction}\label{sec1}
Searching for the global minimizer $x^*$ of a potentially nonconvex objective function $f:\mathbb{R}^d\to\mathbb{R}$ is a long-standing problem and carries considerable significance across a wide range of applications in science, economy, engineering, as well as in the field of machine learning. However, computing global minimum $f^*$ or global minimizer $x^*$ is an NP-hard problem in general. More often than not, traditional optimization methods for solving such inherently challenging problems can only obtain local minimizers or even stationary points. Besides, numerous real-world problems face the challenges that the first-order derivative of the objective function may be either unavailable or expensive to calculate. For instance, obtaining the gradient of objective function is difficult in reinforcement learning \cite{Choromanski2018}, and training deep neural networks often encounters gradient explosion or vanishing \cite{Bengio1997}. Thus, gradient-based optimization algorithms are likely not suitable for global optimization. In recent years, as an alternative approximate optimization technique, a large number of meta-heuristic algorithms are proposed.

Meta-heuristic algorithms are a class of trustworthy and easy-to-understand optimization techniques employed to tackle highly nonconvex optimization problems. These encompass evolutionary algorithms \cite{Fogel2000}, genetic algorithms \cite{Tang1996}, particle swarm optimization \cite{Kennedy1995} and biology-inspired optimization algorithms \cite{Dorigo2005,Krishnanand2009,Karaboga2014}, which are rooted in the collective dynamics of biological swarm. By orchestrating an interaction between local and global strategies, as well as integrating stochastic and deterministic decisions, meta-heuristics generate a process capable of escaping the local minimizer and searching for the global minimizer. These algorithms are widely adopted in the fields such as image processing \cite{Karaboga2014}, machine learning \cite{Atyabi2013} and path planning \cite{Liang2020}. In spite of their significant empirical achievements and widespread implementation in various applications, the complexity of most meta-heuristic methods hinders them from establishing a robust mathematical framework that can demonstrate convergence under some reasonable assumptions.

Inspired by consensus dynamics and opinion formation, Pinnau et al. \cite{Pinnau2017} propose a new meta-heuristic algorithm, so-called consensus-based optimization (CBO), for solving nonconvex optimization problems. As a multi-agent system, CBO algorithm makes use of the interaction of finite particles to seek consensus around the global minimizer. Let $x^i(t)$ denote the position of the $i$-th particle at time $t$, for $i\in[N]$. Carrillo et al. \cite{Carrillo2021} study the following CBO algorithm: 
\begin{equation*}
	\begin{cases}
		\displaystyle \mathrm{d}x^i(t)=-\lambda\left (x^i(t)-\bar{x}^*(t)\right )\mathrm{d}t+\delta\sum\nolimits_{l=1}^d\left (x_l^i(t)-\bar{x}^*(t)\right )\mathrm{d}W^i_l(t)e_l,\\
		\displaystyle \bar{x}^*(t)=\dfrac{\sum_{i=1}^Nx^i(t)\mathrm{e}^{-\beta f(x^i(t))}}{\sum_{i=1}^N\mathrm{e}^{-\beta f(x^i(t))}},\\
	\end{cases}
\end{equation*}
where $\lambda>0$, $\delta\geq0$, $\beta>0$, $\{W^i_l(t)\}$ are independent standard Brownian motions for $i\in[N]$, $l\in[d]$, and $e_l\in\mathbb{R}^d$ is the $l$-th column of the identity matrix. 
In the above CBO algorithm, particles are pulled toward a consensus point by a deterministic drift term. This consensus point represents the instantaneous weighted average of all particle's current positions and serves as a temporary guess for the global minimizer. Additionally, due to Brownian motion, there is a stochastic diffusion term. This term encourages particles to explore the global minimizer, and the exploration capability is influenced by the distance between the particles and the consensus point. 

Lately, there are growing concerns on different variants of CBO algorithm and their rigorous theoretical analysis, e.g. Adam-CBO \cite{Chen2022}, CBO with memory effects \cite{Borghi2023,Riedl2024}, CBO with truncated noise \cite{Fornasier2024}, constrained CBO \cite{Fornasier2020,Fornasier2021a,Borghi2023a} and CBO for saddle point problems \cite{Huang2024}. The authors in the works mentioned above study the convergence of the algorithms, utilizing the Fokker-Planck equation, which can be derived in the mean-field model as the number of particles $N$ tends to infinity. However, since the Fokker-Planck equation is not the original model, these convergence results can not directly indicate the convergence of CBO algorithm itself \cite{Ha2020}. Therefore, showing a theoretical understanding of the finite particles system is highly significant.

In a recent work, Ha et al. \cite{Ha2021} propose a time-discrete CBO algorithm, namely vanilla-CBO, i.e.
\begin{equation*}
	\begin{cases}
		\displaystyle 
		x^{i,k+1}=x^{i,k}-\lambda\left( x^{i,k}-\bar{x}^{\star,k}\right) -\left( x^{i,k}-\bar{x}^{\star,k}\right)\odot \eta^k,\\
		\displaystyle 
		\bar{x}^{\star,k} =\frac{\sum_{i=1}^N x^{i,k}\mathrm{e}^{-\beta f(x^{i,k})}}{\sum_{i=1}^N \mathrm{e}^{-\beta f(x^{i,k})}},
	\end{cases}
\end{equation*}
where the operation $\odot$ denotes the Hadamard product, and  $\eta^k=\left(\eta_1^k,\ldots,\eta_d^k \right)^\top $ is a random vector with i.i.d. components and satisfying
\begin{equation*}
	\mathbb{E}[\eta_l^k]=0,\quad \mathbb{E}[|\eta_l^k|^2]=\delta^2,\quad \forall k\geq0,\quad l\in[d].
\end{equation*}
The authors in \cite{Ha2021} provide a framework to guarantee the convergence of the objective function values at $\{x^{i,k}\}_{k\geq0}$ toward the global minimum of $f$ without using the mean-field model. We also refer the readers to \cite{Ko2022,Ha2024} for the convergence analysis of other CBO variants with fixed number of particles $N$.

In this paper, we consider the following unconstrained optimization problem: 
\begin{equation}\label{problem}
	\min_{x\in\mathbb{R}^d} f(x),
\end{equation}
where $f:\mathbb{R}^d\rightarrow\mathbb{R}_+$ is continuous, but not necessarily convex or smooth, and the explicit expression of the gradient or subgradient for $f$ is not available, instead, only the function values of $f$ can be accessed. 
To further enhance the performance of the vanilla-CBO algorithm, we incorporate the gradient descent idea into our method. Specifically, we introduce an additional term to
describe the negative of the  gradient, which is a descent direction along the objective function values at the current particles. It is likely to accelerate the local convergence rate and push the algorithm move towards the minimizer. Moreover, in order to overcome the loss information of the gradients, we utilize the forward-difference method to approximate and describe the gradient information. Consequently, we develop a novel CBO algorithm that integrates the discrete CBO algorithm and derivative-free method. The rigorous convergence analysis of the proposed algorithm with fixed $N$ is given. More precisely, we first  prove that the iterate process of the algorithm ultimately converges to a global consensus point and then we discuss the optimality gap at this point. Moreover, we study the worst-case iteration complexity of this new algorithm.

\textbf{Contributions.} In view of the numerically observed better performance of the proposed algorithm compared to the vanilla-CBO algorithm in  \cite{Ha2020,Ha2021}, the proposed novel CBO algorithm in this paper is interesting and its convergence analysis is also of theoretical interest. The main contributions of this paper are as follows:
\begin{itemize}
	\item  We propose a new CBO algorithm to solve possibly nonconvex and nonsmooth optimization problem \eqref{problem}, based on the discrete CBO algorithm and derivative-free method. And unlike the works in \cite{Pinnau2017,Fornasier2021,Fornasier2022,Riedl2024}, we provide theoretical guarantees for the convergence of the proposed algorithm to the global minimizer $x^*$ without resorting to the corresponding mean-field limit.
	\item For Lipschitz continuous objective function $f$ in \eqref{problem}, we prove the convergence of the proposed algorithm toward the global minimum, whereas the result in \cite{Ha2020} requires that $f$ is a twice continuously differentiable function with bounded second-order derivative. Moreover, under some reasonable assumptions, the expected worst-case iteration complexity result is investigated. More precisely, the expected mean square distance between the iterates and global minimizer is less than or equal to $\varepsilon$ after at most $\mathcal{O}\left( \log(\varepsilon^{-1})\right) $ iterations. Note that the above complexity results are not provided in previous literatures \cite{Ha2020,Ha2021,Ko2022,Ha2024}.
	\item  We carry out numerical experiments to compare the performance of the proposed algorithm and the vanilla-CBO method in \cite{Ha2020,Ha2021} for minimizing some nonconvex benchmark functions. The results demonstrate that our method has a higher possibility of finding the global minimizer, outperforming the vanilla-CBO method. Furthermore, a fast algorithm is presented to solve the problems in deep learning, demonstrating its practicability.
\end{itemize}

Finally, we summarize the main results of this paper in Fig. \ref{fig9}.
\begin{figure}[h]
	\vspace*{-2mm}
	\centering
	\includegraphics[width=0.95\linewidth]{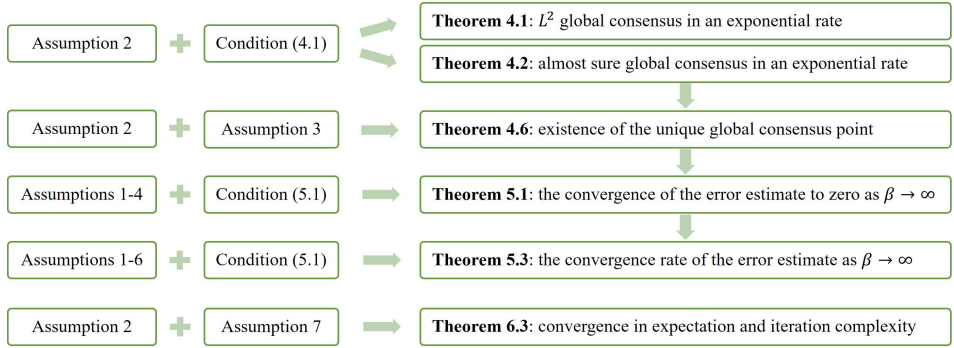}
	\caption{A brief description on the main results of this paper}
	\label{fig9}
\end{figure}

The remainder of this paper is organized as follows. In Section \ref{sec2}, we review some relevant preliminary results. In Section \ref{sec3}, we propose a novel CBO algorithm with approximated gradient descent scheme for solving problem \eqref{problem}. We study its corresponding stochastic global consensus results in Section \ref{sec4}, and provide the error estimate on the objective function values in Section \ref{sec5}. In Section \ref{sec6}, we study the worst-case iteration complexity result when $f$ satisfies some additional assumptions. In Section \ref{sec7}, numerical experiments are performed to demonstrate the efficiency of the proposed algorithm.
\section{Preliminaries}\label{sec2}
\subsection{Notations}\label{sec2.1}
 Let $\mathbb{R}^d$ denote the $d$-dimensional real-valued vector space and $\mathbb{R}^d_+=[0,+\infty)^d$. 
 $\left\| \cdot\right\|$ denotes the $\ell_2$ norm of the corresponding vector or matrix. 
We set $[N]:=\{1,\ldots,N\}$. $\odot$ represents the Hadamard product and $I_d$ denotes the $d\times d$ identity matrix.
For the random variables $X$ and $Y$ in the probability space $(\Omega,\mathcal{F},\mathbb{P})$, $X\sim Y$ means both $X$ and $Y$ follow the same distribution. $X\sim \mathcal{N}(m,s^2)$ denotes that $X$ follows a normal distribution with mean $m$ and variance $s^2$. $\mathbb{E}[X]$ and $\rm{Var}(X)$ indicate the expectation and variance of $X$, respectively. We write $\mathcal{P}_{ac}(\mathbb{R}^d)$ for the space of Borel probability measures that are absolutely continuous w.r.t the Lebesgue measure on $\mathbb{R}^d$. We use the abbreviations ``i.i.d." and ``a.s." for ``independent and identically distributed" and ``almost surely", respectively.  

\subsection{Probabilistic tools}\label{sec2.2}
At the beginning, we give the definitions of the stochastic consensus and submartingale for completeness. For a detailed study, the readers are referred to \cite{Klenke2008} and the references therein. 
\begin{definition}{\cite{Resnick1999}}
	Suppose $\{X_k\}_{k\geq0}$ and $X$ are random variables in $(\Omega,\mathcal{F},\mathbb{P})$. $\{X_k\}$ is said to converge to $X$ a.s., if there exists an event $A\in\mathcal{F}$ such that $\mathbb{P}(A)=0$ and $\lim_{k\to\infty}X_k(\omega)=X$ for all $\omega\notin A$.
\end{definition}
\begin{definition}\label{def-con}\cite{Ha2021}
	Let $\{X_k\}_{k\geq 0}=\left\{x^{i,k}: i\in[N]\right\}_{k\geq 0}$ be a discrete stochastic process.
	\begin{itemize}
		\item[{$(i)$}] The stochastic process $\{X_k\}_{k\geq 0}$ exhibits  $L^p$ global consensus with $p\geq 1$, if
		$$\lim_{k\rightarrow\infty}\max_{i,j\in[N]}\mathbb{E}\left[ \left\|x^{i,k}-x^{j,k} \right\|^p \right]=0.$$
		\item[{$(ii)$}] The stochastic process $\{X_k\}_{k\geq 0}$ exhibits almost sure global consensus, if
		$$\lim_{k\rightarrow\infty} \left\|x^{i,k}-x^{j,k} \right\|=0, ~~\forall  i,j\in[N],~a.s..$$
	\end{itemize}
\end{definition}

\begin{definition}{\cite{Resnick1999}}\label{def-mar}
	Given integrable random variables $\{X_k\}_{k\geq 0}$ and filtrations $\{\mathcal{F}_k\}_{k\geq 0}$, we call $\{(X_k,\mathcal{F}_k)\}_{k\geq 0}$ a submartingale if
	\begin{itemize}
		\item[{$(i)$}]  $X_k\in\mathcal{F}_k$ for each $k$, that is, $X_k$ is $\mathcal{F}_k$-measurable;
		\item[{$(ii)$}]  $\mathbb{E}[X_{k+1}|\mathcal{F}_k]\geq X_k$.
	\end{itemize}
\end{definition}

In what follows, we state some indispensable results which provide the basis for proofs of almost sure convergence.
\begin{theorem}[Doob's martingale convergence theorem \cite{Resnick1999}]\label{th-martingale}
	If $\{(X_k,\mathcal{F}_k)\}_{k\geq 0}$ is a submartingale and satisfies $\sup_k\mathbb{E}[|X_k|]<\infty$, then there exists $X_\infty$ such that $\mathbb{E}\left[ |X_\infty|\right]<\infty$ and $X_k\rightarrow X_\infty$ a.s..
\end{theorem}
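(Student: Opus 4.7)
The plan is to follow the classical upcrossing argument due to Doob, since this is exactly the standard route through the statement. The strategy has three steps: bound the expected number of upcrossings of any rational interval, conclude that $\liminf X_k = \limsup X_k$ almost surely, and then invoke Fatou to ensure integrability of the limit.

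First, I would fix rationals $a < b$ and, for each $k \ge 0$, let $U_k^{[a,b]}$ denote the number of completed upcrossings of $[a,b]$ by the finite sequence $X_0,\ldots,X_k$. The key tool is Doob's upcrossing inequality, which I would derive by a predictable gambling construction: set $Y_k = (X_k - a)^+$ (still a submartingale by Jensen's inequality applied to the convex, nondecreasing map $x\mapsto (x-a)^+$), and define a $\{0,1\}$-valued predictable process $H_k$ that turns on exactly when the path begins an attempted upcrossing of $[a,b]$ and turns off once it completes. The transform $(H\cdot Y)_k=\sum_{j=1}^k H_j(Y_j-Y_{j-1})$ is then a nonnegative submartingale whose value dominates $(b-a)U_k^{[a,b]}$ pathwise, while $\mathbb{E}[(H\cdot Y)_k]\le\mathbb{E}[Y_k-Y_0]$ because $1-H$ is also predictable and nonnegative. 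Rearranging yields
\begin{equation*}
(b-a)\,\mathbb{E}\bigl[U_k^{[a,b]}\bigr] \le \mathbb{E}\bigl[(X_k-a)^+\bigr] - \mathbb{E}\bigl[(X_0-a)^+\bigr] \le |a| + \sup_k\mathbb{E}[|X_k|].
\end{equation*}

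Second, I would pass to the limit $k\to\infty$. Since $U_k^{[a,b]}\uparrow U_\infty^{[a,b]}$ monotonically, the monotone convergence theorem together with the hypothesis $\sup_k\mathbb{E}[|X_k|]<\infty$ gives $\mathbb{E}[U_\infty^{[a,b]}]<\infty$, so $U_\infty^{[a,b]}<\infty$ almost surely. Taking a countable union over all pairs of rationals $a<b$, I conclude that, outside a single null set, every sample path performs only finitely many upcrossings of every rational interval. A standard real-analysis observation then shows that such a path cannot satisfy $\liminf_k X_k(\omega) < \limsup_k X_k(\omega)$, so $X_\infty(\omega):=\lim_k X_k(\omega)$ exists in $[-\infty,+\infty]$ almost surely.

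Finally, I would upgrade this almost-sure limit to an integrable one. Applying Fatou's lemma to the nonnegative sequence $|X_k|$ yields
\begin{equation*}
\mathbb{E}[|X_\infty|] = \mathbb{E}\bigl[\liminf_{k\to\infty}|X_k|\bigr] \le \liminf_{k\to\infty}\mathbb{E}[|X_k|] \le \sup_k\mathbb{E}[|X_k|] < \infty,
\end{equation*}
which forces $|X_\infty|<\infty$ almost surely and delivers the claimed integrability. The main obstacle is the upcrossing inequality itself: constructing the predictable process $H_k$ so that it is genuinely adapted to $\{\mathcal{F}_{k-1}\}$ and verifying the pathwise domination $(H\cdot Y)_k\ge (b-a)U_k^{[a,b]}$ requires a careful case analysis of when an upcrossing starts and ends; once that is in place, the remainder of the argument is routine measure-theoretic bookkeeping.
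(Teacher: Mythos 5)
Your proposal is correct: the upcrossing inequality via the predictable gambling transform, monotone convergence over rational intervals, and Fatou's lemma for integrability of the limit is the standard and complete proof of this result, and the small subtleties you flag (adaptedness of $H$, the pathwise domination $(H\cdot Y)_k\ge (b-a)U_k^{[a,b]}$, and the limit a priori living in $[-\infty,+\infty]$) are handled correctly. Note that the paper itself does not prove this theorem at all — it is quoted as a classical preliminary tool from the cited reference \cite{Resnick1999} — so your argument is simply the textbook proof that the citation points to, and there is nothing to reconcile with the paper's exposition.
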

\begin{proposition}[Borel-Cantelli lemma \cite{Resnick1999}]\label{pro-BC}
	Let $\{A_k\}_{k\geq0}$ be any events. If $\sum_k\mathbb{P}(A_k)<\infty$, then
\begin{equation*}
	\mathbb{P}\left( \limsup_{k\to\infty}A_k\right)=\mathbb{P}\left( \bigcap\limits_{k=1}^\infty\bigcup\limits_{n=k}^\infty A_n\right) =0. 
\end{equation*}
\end{proposition}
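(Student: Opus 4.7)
The plan is to derive $\mathbb{P}(\limsup_k A_k) = 0$ from the two basic properties of a probability measure that are available without any independence hypothesis: countable subadditivity and continuity from above on a decreasing sequence of finite-measure sets. The entire argument fits within three short steps and uses no machinery beyond what is recalled earlier in Section \ref{sec2.2}.

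First, I would introduce the auxiliary events $B_k := \bigcup_{n=k}^\infty A_n$ and record two structural facts: the sequence $\{B_k\}_{k\geq 1}$ is decreasing with $\bigcap_{k=1}^\infty B_k = \limsup_{k\to\infty} A_k$, and $\mathbb{P}(B_1)\leq \sum_{n=1}^\infty \mathbb{P}(A_n) < \infty$ by countable subadditivity combined with the hypothesis. Second, because $\mathbb{P}(B_1)$ is finite and $B_k\downarrow \bigcap_{k=1}^\infty B_k$, continuity of $\mathbb{P}$ from above gives
\begin{equation*}
	\mathbb{P}\Bigl(\limsup_{k\to\infty}A_k\Bigr)=\mathbb{P}\Bigl(\bigcap_{k=1}^\infty B_k\Bigr)=\lim_{k\to\infty}\mathbb{P}(B_k).
\end{equation*}
Third, I would estimate $\mathbb{P}(B_k)\leq \sum_{n=k}^\infty \mathbb{P}(A_n)$ by countable subadditivity once more, and observe that since the series $\sum_{n=1}^\infty \mathbb{P}(A_n)$ converges, its tail $\sum_{n=k}^\infty \mathbb{P}(A_n)$ tends to $0$ as $k\to\infty$. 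Chaining the three displays yields $\mathbb{P}(\limsup_k A_k)=0$.

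There is no genuine obstacle in this proof: it is a textbook fact whose only subtle point is the legitimacy of passing the limit inside $\mathbb{P}$, which requires finiteness of some $\mathbb{P}(B_{k_0})$ (here $k_0=1$) to invoke continuity from above. That finiteness is delivered automatically by the convergence of $\sum_k \mathbb{P}(A_k)$, so the proof goes through without any additional assumption. Note in particular that, unlike the converse direction of Borel--Cantelli, no independence of the events $\{A_k\}$ is required, which matches the statement as given.
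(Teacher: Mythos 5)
Your proof is correct and is the standard textbook argument for the first Borel--Cantelli lemma; the paper itself does not prove this proposition but simply cites it from the literature, so there is nothing to compare against beyond noting that your route (subadditivity plus continuity from above on the decreasing tails $B_k=\bigcup_{n\geq k}A_n$) is exactly the classical one. One small remark: since $\mathbb{P}$ is a probability measure, $\mathbb{P}(B_1)\leq 1$ is finite automatically, so the finiteness needed for continuity from above does not actually rely on the convergence of $\sum_k\mathbb{P}(A_k)$ — that hypothesis is only needed to make the tail sums vanish in your third step.
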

\begin{proposition}{\cite{Resnick1999}}\label{iff}
	$\{X_k\}$ converges to $X$ a.s. iff $\mathbb{P}\left( \bigcap\limits_{k=1}^\infty\bigcup\limits_{n=k}^\infty \left[ \left| X_n-X\right|\geq \varepsilon \right] \right) =0$ for all $\varepsilon>0$.
\end{proposition}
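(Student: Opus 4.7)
The plan is to establish the equivalence by re-expressing the event of non-convergence as a countable union of limsup events, and then exploiting countable subadditivity and monotonicity of $\mathbb{P}$.

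First, I would introduce the shorthand $A_\varepsilon := \bigcap_{k=1}^\infty \bigcup_{n=k}^\infty [|X_n - X| \geq \varepsilon]$, so that the statement to be proved becomes: $X_k \to X$ a.s.\ iff $\mathbb{P}(A_\varepsilon) = 0$ for every $\varepsilon > 0$. The crux of the argument is the set-theoretic identity
\[
\{\omega : X_n(\omega) \not\to X(\omega)\} = \bigcup_{m=1}^\infty A_{1/m}.
\]
To derive it, I would write out pointwise convergence in the usual $\varepsilon$--$K$ form, negate it to obtain ``there exists $\varepsilon > 0$ such that $|X_n(\omega) - X(\omega)| \geq \varepsilon$ for infinitely many $n$'', and then use the Archimedean property to replace the uncountable quantifier $\exists\, \varepsilon > 0$ by the countable quantifier $\exists\, m \in \mathbb{N}$ with $\varepsilon = 1/m$. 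This step simultaneously restores measurability of $\{X_n \not\to X\}$ and hands the problem over to routine measure-theoretic manipulation.

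For the forward direction ($\Rightarrow$), a.s.\ convergence means $\mathbb{P}\!\left(\bigcup_m A_{1/m}\right) = 0$, whence $\mathbb{P}(A_{1/m}) = 0$ for every $m$. Given an arbitrary $\varepsilon > 0$, I would pick $m$ with $1/m \leq \varepsilon$; since $\{|X_n - X| \geq \varepsilon\} \subseteq \{|X_n - X| \geq 1/m\}$, this gives $A_\varepsilon \subseteq A_{1/m}$ and therefore $\mathbb{P}(A_\varepsilon) = 0$. For the reverse direction ($\Leftarrow$), assuming $\mathbb{P}(A_\varepsilon) = 0$ for every $\varepsilon > 0$, countable subadditivity yields
\[
\mathbb{P}\!\left(\bigcup_{m=1}^\infty A_{1/m}\right) \leq \sum_{m=1}^\infty \mathbb{P}(A_{1/m}) = 0,
\]
and the identity above then delivers $X_n \to X$ a.s.

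The only genuinely subtle ingredient is the countable reduction of the uncountable quantifier ``$\forall\, \varepsilon > 0$'': the naive union $\bigcup_{\varepsilon > 0} A_\varepsilon$ is not a priori measurable, so the whole proof hinges on being able to replace it by the countable union $\bigcup_m A_{1/m}$. Once that reduction is in hand, everything else reduces to standard applications of monotonicity ($A_\varepsilon \subseteq A_{1/m}$ when $1/m \leq \varepsilon$) and countable subadditivity, which I expect to write out in a few lines.
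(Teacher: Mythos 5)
Your argument is correct: the identity $\{X_n \not\to X\} = \bigcup_{m=1}^{\infty} A_{1/m}$, combined with monotonicity ($A_\varepsilon \subseteq A_{1/m}$ for $1/m \leq \varepsilon$) in one direction and countable subadditivity in the other, is exactly the standard proof of this result. The paper itself gives no proof, citing it directly from the reference \cite{Resnick1999}, and your write-up is essentially the argument found there, so nothing further is needed.
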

\begin{proposition}[Laplace's principle \cite{Pinnau2017}]\label{Laplace}
	Assume that $f:\mathbb{R}^d\rightarrow\mathbb{R}_+$ is continuous, bounded and attains the global minimum at the unique point $x^*$, then for any probability measure $\rho\in\mathcal{P}_{ac}(\mathbb{R}^d)$, it holds
	\begin{equation*}\label{laplace}
		\lim\limits_{\beta\rightarrow\infty}\left(-\frac{1}{\beta}\log\left(\int_{\mathbb{R}^d}\mathrm{e}^{-\beta f(x)}\mathrm{d}\rho(x)\right)\right)=f(x^*).
	\end{equation*}
\end{proposition}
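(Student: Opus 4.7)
The plan is to prove the identity by a two-sided squeeze on $-\beta^{-1}\log I(\beta)$, where I write $I(\beta) := \int_{\mathbb{R}^d} \mathrm{e}^{-\beta f(x)}\,\mathrm{d}\rho(x)$ and $f^* := f(x^*)$. First I would dispose of the easy lower bound: since $f(x)\geq f^*$ pointwise, the integrand is dominated by $\mathrm{e}^{-\beta f^*}$, and because $\rho$ is a probability measure this gives $I(\beta)\leq \mathrm{e}^{-\beta f^*}$, hence
\begin{equation*}
-\frac{1}{\beta}\log I(\beta) \;\geq\; f^* \qquad \text{for every } \beta>0.
\end{equation*}

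For the matching upper bound, I would fix $\varepsilon>0$ and exploit continuity of $f$ at $x^*$ to choose $\delta=\delta(\varepsilon)>0$ such that $f(x)<f^*+\varepsilon$ on $B(x^*,\delta)$. Restricting the integral to this ball yields
\begin{equation*}
I(\beta) \;\geq\; \int_{B(x^*,\delta)} \mathrm{e}^{-\beta f(x)}\,\mathrm{d}\rho(x) \;\geq\; \mathrm{e}^{-\beta(f^*+\varepsilon)}\,\rho(B(x^*,\delta)),
\end{equation*}
so taking logarithms and dividing by $-\beta$ gives
\begin{equation*}
-\frac{1}{\beta}\log I(\beta) \;\leq\; f^*+\varepsilon \;-\; \frac{1}{\beta}\log \rho(B(x^*,\delta)).
\end{equation*}
Letting $\beta\to\infty$ kills the last term, and then sending $\varepsilon\downarrow 0$ closes the squeeze.

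The point I expect to be the main obstacle is the step requiring $\rho(B(x^*,\delta))>0$ for every $\delta>0$, because membership in $\mathcal{P}_{ac}(\mathbb{R}^d)$ alone does not rule out a Radon--Nikodym density that vanishes in a whole neighborhood of $x^*$. I would resolve this by reading the hypothesis in the sense used in \cite{Pinnau2017}, namely that $x^*$ lies in the support of $\rho$; under this interpretation $\rho(B(x^*,\delta))$ is strictly positive for all $\delta>0$ and the argument goes through. Combining the two bounds yields
\begin{equation*}
f^* \;\leq\; \liminf_{\beta\to\infty}\left(-\frac{1}{\beta}\log I(\beta)\right) \;\leq\; \limsup_{\beta\to\infty}\left(-\frac{1}{\beta}\log I(\beta)\right) \;\leq\; f^*,
\end{equation*}
which is the desired identity. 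The uniqueness of $x^*$ and the boundedness of $f$ play no active role in this particular argument but are standard hypotheses under which the Laplace principle is usually packaged.
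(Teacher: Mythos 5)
The paper does not prove this proposition at all: it is quoted verbatim as a known result from \cite{Pinnau2017}, so there is no internal proof to compare against. Your squeeze argument is the standard proof of the Laplace principle and is correct as written: the lower bound $-\beta^{-1}\log I(\beta)\geq f^*$ is immediate from $f\geq f^*$ and $\rho(\mathbb{R}^d)=1$, and the upper bound follows from restricting to a small ball where continuity gives $f<f^*+\varepsilon$. You have also put your finger on the one genuine delicacy: as literally stated (``for any $\rho\in\mathcal{P}_{ac}(\mathbb{R}^d)$'') the claim can fail, since an absolutely continuous density may vanish on a neighborhood of $x^*$, in which case the limit is $\operatorname{ess\,inf}_{\operatorname{supp}\rho}f>f^*$. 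Your resolution is the right one and matches the source: in \cite{Pinnau2017} the principle is formulated with $\inf_{x\in\operatorname{supp}(\rho)}f(x)$ on the right-hand side, so the statement here should be read with the implicit assumption $x^*\in\operatorname{supp}(\rho)$, under which $\rho(B(x^*,\delta))>0$ for every $\delta>0$ and your argument closes. Your remarks that boundedness of $f$ and uniqueness of $x^*$ are inert in this argument are also accurate (nonnegativity of $f$ already makes $I(\beta)$ finite); they matter for how the proposition is used later in the paper, not for its proof.
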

\subsection{Gradient estimation}
 In problem \eqref{problem}, since the explicit expression of gradient for $f$ is not available, and only the function values are available, we use the finite difference methods to approximate its gradient. The most straightforward and economical finite difference method for approximating the first-order derivative is forward-difference approximation, which is defined by
\begin{equation}\label{fd}
	[g(x)]_l=\frac{f(x+\sigma e_l)-f(x)}{\sigma},\quad\forall l\in[d],
\end{equation}
where $\sigma>0$ is the finite difference interval and $e_l\in\mathbb{R}^d$ is the $l$-th column of the identity matrix.

\begin{lemma}\label{le-fd}
	If $f$ in \eqref{fd} is Lipschitz continuous with Lipschitz constant $L_f>0$, then there exist positive constants $M_g$ and $L_g$ such that
	\begin{align*}
		\left\|g(x)\right\|&\leq M_g, \\
		\left\|g(x)-g(y)\right\|&\leq L_g\|x-y\|, 
	\end{align*}
	for all $x,y\in\mathbb{R}^d$.
\end{lemma}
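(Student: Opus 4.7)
The plan is to derive both estimates by applying the Lipschitz hypothesis on $f$ componentwise to the definition \eqref{fd} of $g$. Since $g(x) = ([g(x)]_1, \ldots, [g(x)]_d)^\top$, it suffices to bound each coordinate separately and then sum the squares.

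For the first bound, I would note that for every $l \in [d]$,
\begin{equation*}
|[g(x)]_l| = \frac{|f(x+\sigma e_l)-f(x)|}{\sigma} \leq \frac{L_f \|\sigma e_l\|}{\sigma} = L_f,
\end{equation*}
so that $\|g(x)\| \leq \sqrt{d}\, L_f$, which allows the choice $M_g := \sqrt{d}\, L_f$.

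For the Lipschitz bound on $g$, the key step is to split each coordinate difference into two paired $f$-differences as
\begin{equation*}
[g(x)]_l - [g(y)]_l = \frac{1}{\sigma}\Bigl[\bigl(f(x+\sigma e_l)-f(y+\sigma e_l)\bigr) - \bigl(f(x)-f(y)\bigr)\Bigr],
\end{equation*}
and then apply the triangle inequality followed by Lipschitz continuity of $f$ to each pair. This produces $|[g(x)]_l - [g(y)]_l| \leq \tfrac{2L_f}{\sigma}\|x-y\|$, and summing squares over $l \in [d]$ gives $\|g(x)-g(y)\| \leq \tfrac{2\sqrt{d}\,L_f}{\sigma}\|x-y\|$, so one may take $L_g := \tfrac{2\sqrt{d}\,L_f}{\sigma}$.

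There is no real obstacle here: both estimates reduce to a direct application of the triangle inequality and the hypothesis on $f$. The only noteworthy feature is that the resulting Lipschitz constant $L_g$ scales like $1/\sigma$, which reflects the fact that the forward-difference operator becomes more sensitive as the step size $\sigma$ shrinks; this scaling will presumably reappear in later sections when the interplay between $\sigma$ and the convergence rate is analyzed.
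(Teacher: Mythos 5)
Your proposal is correct and follows essentially the same route as the paper's own proof: both bound each coordinate $[g(x)]_l$ via the Lipschitz property of $f$, split the difference $[g(x)]_l-[g(y)]_l$ into the two paired $f$-differences, and sum squares over $l\in[d]$ to obtain $M_g=\sqrt{d}\,L_f$ and $L_g=\tfrac{2\sqrt{d}\,L_f}{\sigma}$, exactly the constants used in the paper.
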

\begin{proof}
	By the Lipschitz continuity of $f$ and \eqref{fd}, it holds
	\begin{equation*}
		\left| \left[ g(x)\right]_l \right| \leq\frac{L_f}{\sigma}\left\|x+\sigma\mathrm{e}_l-x \right\|=L_f
	\end{equation*}
	and
	\begin{equation*}
		\left| \left[ g(x)\right]_l -\left[ g(y)\right]_l\right| \leq\frac{1}{\sigma}\left( \left|f(x+\sigma\mathrm{e}_l)-f(y+\sigma\mathrm{e}_l) \right| +\left| f(x)-f(y) \right|\right) \leq\frac{2L_f}{\sigma}\|x-y\|,
	\end{equation*}
	for all $l\in[d]$ and $x,y\in\mathbb{R}^d$.
	Then, we have
	\begin{equation*}
		\left\|g(x) \right\|=\sqrt{\sum_{l=1}^d\left| \left[ g(x)\right]_l \right|^2}\leq\sqrt{d}L_f=:M_g
	\end{equation*}
	and
	\begin{equation*}
		\left\|g(x)-g(y) \right\|=\sqrt{\sum_{l=1}^d\left| \left[ g(x)\right]_l-\left[ g(y)\right]_l \right|^2}\leq\frac{2\sqrt{d}L_f}{\sigma}\|x-y\|=:L_g\|x-y\|,
	\end{equation*}
	for all $x,y\in\mathbb{R}^d$.
\end{proof}	

\begin{remark}
	It is noteworthy that the convergence analysis of the proposed algorithm only requires that the gradient estimator $g$ to meet the properties given in Lemma \ref{le-fd}. While we focus on using forward-difference to estimate the gradient in this paper, alternative derivative-free techniques, such as central finite differences, polynomial interpolation, Gaussian smoothed gradient and ball smoothed gradient (refer to \cite{Berahas2022} for details), are also viable options for gradient estimation. Lemma \ref{le-fd} and Lemma \ref{le-fd} in the expected sense are applicable to these deterministic and stochastic gradient estimators, respectively. Thus, if \eqref{fd} is replaced with these estimators, the theoretical results of the proposed algorithm in this paper are still valid.
\end{remark}

\section{A CBO algorithm with an extra approximated gradient descent step}\label{sec3}

In this section, we propose a variant of CBO algorithm with an extra approximated gradient descent step. Specifically, for all particles, we first calculate new trial points via the CBO algorithm and then perform an additional approximated gradient step to obtain the next iterates. Here, we use the forward-difference, defined in \eqref{fd}, to approximate the gradient of the objective function at current iterate. A detailed description of our algorithm, namely Extra-Step Consensus-Based Optimization (ESCBO) algorithm, is given in Algorithm \ref{alg-escbo}. 

\begin{algorithm}[htp]
\caption{Extra-Step Consensus-Based Optimization (ESCBO) algorithm}
\label{alg-escbo}
\begin{algorithmic}[1]
	\Require
	the number of particles $N$, initial points $x^{i,0}\in\mathbb{R}^d$, $i\in[N]$,
	parameters $\beta>0$, $\lambda> 0$, $\delta\geq0$, $\sigma>0$ and a nonnegative sequence $\left\lbrace \alpha_k\right\rbrace $.\\
	Set $k=0$.
	\While{a termination criterion is not met,}
	\State Compute the weighted average point by
	\begin{equation}\label{weightedstate}
		\bar{x}^{\star,k}=\left( \bar{x}^{\star,k}_1,\ldots,\bar{x}^{\star,k}_d\right)^\top =\frac{\sum_{i=1}^N x^{i,k}\mathrm{e}^{-\beta f(x^{i,k})}}{\sum_{i=1}^N \mathrm{e}^{-\beta f(x^{i,k})}}.
	\end{equation}
	\State Generate a random vector $\eta^k=\left(\eta^k_1,\ldots,\eta^k_d \right)^\top $ whose components are i.i.d. and satisfies
	\begin{equation}\label{eta}
		\eta_l^k\sim\mathcal{N}(0,\delta^2),\quad l\in[d].
	\end{equation}
	\State Compute the gradient estimator $g^{i,k}:=g(x^{i,k})$ using \eqref{fd} for $i\in[N]$.
	\State Update $x^{i,k+1}$ for $i\in[N]$ by
    \begin{align}
			&y^{i,k+1}=x^{i,k}-\lambda\left( x^{i,k}-\bar{x}^{\star,k}\right) -\left( x^{i,k}-\bar{x}^{\star,k}\right) \odot\eta^k,\label{updatestate1} \\ 
			&x^{i,k+1}=y^{i,k+1}-\alpha_{k}g^{i,k}.\label{updatestate2}
	\end{align}
	\State Set $k=k+1$.
	\EndWhile
	\Ensure
	$x^{i,k+1}$, $i\in[N]$.
\end{algorithmic}
\end{algorithm}

The stochasticity in the ESCBO algorithm mainly results from the random variables $\{\eta_l^k\}_{k,l}$ in \eqref{eta}. Furthermore, we suppose that the probability space $(\Omega, \mathcal{F}, \mathbb{P})$ is sufficiently rich, enabling us to model the associated random processes in a unified way. We now define $\mathcal{F}_k$ as the $\sigma$-algebra generated by the family of random vectors $\eta^0,\ldots,\eta^k$, that is, 
\begin{equation}\label{Fk}
	\mathcal{F}_k:=\sigma\left(\eta^0,\ldots,\eta^k\right),\quad k\geq0.
\end{equation}
The design of the ESCBO algorithm yields that $x^{i,k}$ is adapted to the filtration $\mathcal{F}_{k-1}$, i.e. we have $x^{i,k}\in\mathcal{F}_{k-1}$ for all $i\in[N]$ and $k\geq0$.
Throughout this paper, we make use of the following assumptions.

\begin{assumption}\label{ass0}
	There exists a unique global minimizer $x^*\in\mathbb{R}^d$ to problem \eqref{problem}. 
\end{assumption}
\begin{remark}
	The uniqueness of the global minimizer in Assumption \ref{ass0} is a  common condition for analyzing the convergence of the CBO methods \cite{Ha2021,Ha2024,Fornasier2021,Fornasier2024,Carrillo2024}. This assumption stems from the definition of the weighted average point in \eqref{weightedstate}. Specifically, $\bar{x}^{\star,k}$ can be interpreted as an approximation of $\arg\min_{x\in\{x^{i,k},i\in[N]\}}f(x)$, whose approximation performance improves as $\beta\to\infty$, provided the global minimizer uniquely exists. If there are two global minimizers, $\bar{x}^{\star,k}$ could potentially lie between them, regardless of how large the value of $\beta$ is. Therefore, Assumption \ref{ass0} is essential for theoretical analysis.
\end{remark}
\begin{assumption}\label{ass1}
	$f$ in \eqref{problem} is Lipschitz continuous with Lipschitz constant $L_f>0$.
\end{assumption}
Under Assumption \ref{ass1} and Lemma \ref{le-fd}, let the random process $\{x^{i,k}\}_{i=1}^N$ be generated by the ESCBO algorithm, certain properties of gradient estimators $g^{i,k}$ are as follows:
\begin{align}
	\mathbb{E}\left[ \left\|g^{i,k}\right\|\right] &\leq M_g, \label{bound}\\
	\mathbb{E}\left[\left\|g^{i,k}-g^{j,k}\right\|\right] &\leq L_g\mathbb{E}\left[ \left \|x^{i,k}-x^{j,k}\right \|\right] , \label{lip}
\end{align}
for all $i,j\in[N]$ and $k\geq0$.

\section{Global consensus analysis}\label{sec4}
Global consensus means that all particles achieve an agreement on their common state by utilizing information derived from communication mechanisms. In this section, we are dedicated to analyzing the global consensus of the ESCBO algorithm under some appropriate conditions on parameters $\lambda$, $\delta$ and step sizes $\{\alpha_k\}_{k\geq0}$.
\subsection{Emergence of global consensus}\label{4.1}
According to Definition \ref{def-con}, we study several results related to the emergence of global consensus for the ESCBO algorithm in this subsection.
\begin{theorem}\label{th-L2}
	Let the random process $\{x^{i,k}\}_{i=1}^N$ be generated by the ESCBO algorithm. Suppose that Assumption \ref{ass1} is fulfilled and assume that the parameters $\lambda$, $\delta$ and step sizes $\{\alpha_k\}_{k\geq0}$ satisfy the following condition:
	\begin{equation}\label{condition1}
		(1-\lambda)^2+\delta^2<\frac{1}{2}\quad\mbox{and}\quad \lim_{k\to\infty}\alpha_k=0.
	\end{equation}
	Then, the ESCBO algorithm reaches $L^2$ global consensus in an exponential rate.
\end{theorem}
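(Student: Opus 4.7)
The plan is to control the pairwise difference $\Delta^k_{ij} := x^{i,k} - x^{j,k}$ by deriving a one-step contraction in expectation, and then iterate. First I would subtract the update \eqref{updatestate1}--\eqref{updatestate2} for particle $j$ from that of particle $i$. Since the weighted average $\bar{x}^{\star,k}$ cancels in both the drift and noise terms (they are functions of $x^{i,k}-\bar{x}^{\star,k}$ and $x^{j,k}-\bar{x}^{\star,k}$), this yields the clean recursion
\begin{equation*}
\Delta^{k+1}_{ij}=(1-\lambda)\Delta^k_{ij}-\Delta^k_{ij}\odot\eta^k-\alpha_k\bigl(g^{i,k}-g^{j,k}\bigr).
\end{equation*}

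Next I would expand $\|\Delta^{k+1}_{ij}\|^2$ coordinate-wise and take conditional expectation with respect to $\mathcal{F}_{k-1}$. The crucial observation is that $x^{i,k}, x^{j,k}, g^{i,k}, g^{j,k}$ are all $\mathcal{F}_{k-1}$-measurable while $\eta^k$ is independent of $\mathcal{F}_{k-1}$ with $\mathbb{E}[\eta^k_l]=0$ and $\mathbb{E}[(\eta^k_l)^2]=\delta^2$. Consequently every cross term involving a single factor of $\eta^k$ vanishes, and one is left with
\begin{equation*}
\mathbb{E}\!\left[\|\Delta^{k+1}_{ij}\|^2 \mid \mathcal{F}_{k-1}\right]=\bigl((1-\lambda)^2+\delta^2\bigr)\|\Delta^k_{ij}\|^2+\alpha_k^2\|g^{i,k}-g^{j,k}\|^2-2(1-\lambda)\alpha_k\langle \Delta^k_{ij},\,g^{i,k}-g^{j,k}\rangle.
\end{equation*}
Applying the Lipschitz bound from Lemma \ref{le-fd}, namely $\|g^{i,k}-g^{j,k}\|\le L_g\|\Delta^k_{ij}\|$, together with Cauchy--Schwarz on the inner product, absorbs all the extra terms into a multiplicative factor. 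Taking full expectation yields
\begin{equation*}
\mathbb{E}\!\left[\|\Delta^{k+1}_{ij}\|^2\right]\le a_k\,\mathbb{E}\!\left[\|\Delta^k_{ij}\|^2\right],\qquad a_k:=(1-\lambda)^2+\delta^2+2|1-\lambda|L_g\alpha_k+L_g^2\alpha_k^2.
\end{equation*}

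Finally, I would translate the hypothesis \eqref{condition1} into exponential decay. Because $(1-\lambda)^2+\delta^2<1/2$ and $\alpha_k\to 0$, we have $a_k\to (1-\lambda)^2+\delta^2<1/2$; in particular there exist $K_0\in\mathbb{N}$ and a constant $c\in(0,1)$ (e.g.\ any number strictly between $(1-\lambda)^2+\delta^2$ and $1/2$) such that $a_k\le c$ for all $k\ge K_0$. For $k<K_0$ the factors $a_k$ are bounded by some finite constant, so iterating the one-step recursion gives
\begin{equation*}
\mathbb{E}\!\left[\|\Delta^k_{ij}\|^2\right]\le C\,c^{\,k}\quad\text{for all } i,j\in[N],
\end{equation*}
with $C$ depending only on $\lambda,\delta,L_g,N$ and the initial data $\{x^{i,0}\}$; taking the maximum over $i,j$ yields $L^2$ global consensus at exponential rate.

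I do not expect any serious obstacle: the Hadamard noise and the extra gradient step both preserve linearity in $\Delta^k_{ij}$, so the only nontrivial verification is the bookkeeping of the variance term and the cross term with $g^{i,k}-g^{j,k}$. The one subtlety to state clearly is the measurability argument that lets the $\eta^k$-linear cross terms vanish under conditional expectation, which is what makes the coefficient collapse to $(1-\lambda)^2+\delta^2$ plus an $\mathcal{O}(\alpha_k)$ correction.
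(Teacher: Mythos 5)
Your proof is correct, and it follows the same overall strategy as the paper (a one-step contraction for $\mathbb{E}\bigl[\|x^{i,k}-x^{j,k}\|^2\bigr]$, iterated to get geometric decay), but the way you absorb the extra gradient step is genuinely different and in fact sharper. The paper treats the two substeps \eqref{updatestate1} and \eqref{updatestate2} separately: it first gets the exact identity $\mathbb{E}\bigl[\|y^{i,k+1}-y^{j,k+1}\|^2\bigr]=\bigl((1-\lambda)^2+\delta^2\bigr)\mathbb{E}\bigl[\|x^{i,k}-x^{j,k}\|^2\bigr]$, and then bounds the gradient correction via $(a+b)^2\le 2(a^2+b^2)$, which produces the contraction factor $C_{1,k}=2\bigl((1-\lambda)^2+\delta^2+\alpha_k^2L_g^2\bigr)$; the factor $2$ is exactly why the hypothesis in \eqref{condition1} is stated with the threshold $\tfrac12$. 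You instead combine both substeps into the single recursion $\Delta^{k+1}_{ij}=(1-\lambda)\Delta^k_{ij}-\Delta^k_{ij}\odot\eta^k-\alpha_k(g^{i,k}-g^{j,k})$, kill the $\eta^k$-linear cross terms by conditioning on $\mathcal{F}_{k-1}$ (this is legitimate: $x^{i,k}$ and hence $g^{i,k}$ are $\mathcal{F}_{k-1}$-measurable, $\eta^k$ is independent of $\mathcal{F}_{k-1}$ with mean zero, and the pointwise Lipschitz bound of Lemma \ref{le-fd} gives $\|g^{i,k}-g^{j,k}\|\le L_g\|\Delta^k_{ij}\|$ almost surely), and handle the remaining cross term by Cauchy--Schwarz. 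This yields the factor $a_k=(1-\lambda)^2+\delta^2+2|1-\lambda|L_g\alpha_k+L_g^2\alpha_k^2$, so your argument actually only needs $(1-\lambda)^2+\delta^2<1$ rather than $<\tfrac12$; under the stated hypothesis \eqref{condition1} it certainly goes through, and the handling of the finitely many early indices (where $a_k$ may exceed the limiting value but is bounded since $\alpha_k\to 0$) matches the paper's treatment via the constant $C_2$. The only thing your route gives up is the explicit product form $\prod_{n}C_{1,n}$ that the paper reuses verbatim in the corollary on $\mathbb{E}\bigl[\|x^{i,k}-\bar{x}^{\star,k}\|^2\bigr]$ and in the error estimate of Theorem \ref{th-err}; with your constants one would simply carry $\prod_n a_n$ instead, which works equally well.
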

\begin{proof}
	Due to \eqref{updatestate1}, it follows
	\begin{equation*}
		\begin{split}
			y^{i,k+1}_l-y^{j,k+1}_l=&x^{i,k}_l-x^{j,k}_l-\lambda\left(x^{i,k}_l-x^{j,k}_l \right) -\left( x^{i,k}_l-x^{j,k}_l\right) \eta^k_l\\
			=&\left( 1-\lambda-\eta^k_l\right) \left( x^{i,k}_l-x^{j,k}_l\right).
		\end{split}
		\end{equation*}
	Then, this yields
	\begin{equation}\label{yij}
		\left| y^{i,k+1}_l-y^{j,k+1}_l\right|^2=\left( 1-\lambda-\eta^k_l\right)^2\left| x^{i,k}_l-x^{j,k}_l\right|^2.  
	\end{equation}
Taking expectation in \eqref{yij}, it holds that
\begin{equation}\label{Eyij}
	\begin{split}
			\mathbb{E}\left[ \left|y^{i,k+1}_l-y^{j,k+1}_l\right|^2\right] =&\mathbb{E}\left[\left( 1-\lambda-\eta^k_l\right)^2\left| x^{i,k}_l-x^{j,k}_l\right|^2\right] \\
			=&\mathbb{E}\left[\left( 1-\lambda-\eta^k_l\right)^2\right] \mathbb{E}\left[ \left| x^{i,k}_l-x^{j,k}_l\right|^2\right]\\
			=&\left((1-\lambda)^2+\delta^2 \right) \mathbb{E}\left[ \left| x^{i,k}_l-x^{j,k}_l\right|^2\right],
\end{split}		
	\end{equation}
where the second equality follows from the independence of $\eta_l^k$ and $x^{i,k}_l-x^{j,k}_l$ for any given $ k\geq 0$ and $l\in[d]$, and the last equality is due to \eqref{eta}.
Summing up \eqref{Eyij} over $l$, we have
\begin{equation}\label{Eyij2}
	\mathbb{E}\left[\left\|  y^{i,k+1}-y^{j,k+1}\right\| ^2\right]= \left((1-\lambda)^2+\delta^2 \right) \mathbb{E}\left[ \left\| x^{i,k}-x^{j,k}\right\|^2\right].
\end{equation}
Next, by \eqref{updatestate2}, we deduce
\begin{equation}\label{Exij2}
	\begin{split}
		\mathbb{E}\left[ \left\| x^{i,k}-x^{j,k}\right\|^2\right]=&\mathbb{E}\left[\left\| y^{i,k}-\alpha_{k-1}g^{i,k-1}-y^{j,k}+\alpha_{k-1}g^{j,k-1}\right\|^2  \right] \\
		\leq&2\mathbb{E}\left[\left\| y^{i,k}-y^{j,k}\right\|^2  \right]+2\alpha^2_{k-1}\mathbb{E}\left[\left\| g^{i,k-1}-g^{j,k-1}\right\|^2  \right]\\
		\leq&2\left((1-\lambda)^2+\delta^2 +\alpha_{k-1}^2L_g^2\right) \mathbb{E}\left[ \left\| x^{i,k-1}-x^{j,k-1}\right\|^2\right],
	\end{split}
\end{equation}
where the second inequality utilizes $(a+b)^2\leq 2(a^2+b^2)$, $\forall a,b\in\mathbb{R}$, and the last inequality follows from \eqref{lip} and \eqref{Eyij2}.
By the recursive relation in \eqref{Exij2}, we get
\begin{equation}\label{cn}
		\mathbb{E}\left[ \left\| x^{i,k}-x^{j,k}\right\|^2\right]\leq\prod_{n=0}^{k-1}C_{1,n}\mathbb{E}\left[ \left\| x^{i,0}-x^{j,0}\right\|^2\right],
\end{equation}
where $C_{1,n}:=2\left((1-\lambda)^2+\delta^2 +\alpha_{n}^2L_g^2\right)$.
Thus, combining this and \eqref{condition1}, it holds that if one chooses a positive constant $\tau_1$ satisfying 
\begin{equation*}
	2\left((1-\lambda)^2+\delta^2\right)<\tau_1<1,
	\end{equation*}
there exists a $K>0$ such that 
\begin{equation*}
	\mathbb{E}\left[ \left\| x^{i,k}-x^{j,k}\right\|^2\right]\leq\tau_1^k \mathbb{E}\left[ \left\| x^{i,0}-x^{j,0}\right\|^2\right],\quad \forall k\geq K.
\end{equation*}
In other words, there exists a positive and bounded random variable 
$$C_2(\omega):=\max_{0\leq k\leq K}\left\lbrace \tau_1^{-k}	\mathbb{E}\left[ \left\| x^{i,k}-x^{j,k}\right\|^2\right]\right\rbrace, $$
which satisfies
\begin{equation}\label{tau1}
	\mathbb{E}\left[ \left\| x^{i,k}-x^{j,k}\right\|^2\right]\leq C_2(\omega)\tau_1^k\leq C_2(\omega)\mathrm{e}^{(\tau_1-1)k},\quad \forall k\geq0,
\end{equation}
where the last inequality follows from $a\leq\mathrm{e}^{a-1},~\forall a\in\mathbb{R}$.
\end{proof}
In the subsequent corollary, we will establish a bound on the expectation of the difference between each particle's position and the weighted average point $\bar{x}^{\star}$, as the direct applications of consensus estimates provided by Theorem \ref{th-L2}. Before proceeding, we define the diameter at iteration $k$ as
$$\mathcal{D}\left( x^k\right) :=\max_{i,j\in[N],i\neq j}\left\|x^{i,k}-x^{j,k} \right\|^2. $$
\begin{corollary}
	Under the same setting as in Theorem \ref{th-L2} together with the following additional assumptions on the distribution of initial data:
	\begin{equation}\label{condition2}
		\{x^{i,0}\} ~\mbox{are i.i.d.},\quad x^{i,0}\sim x_{\rm{in}},
	\end{equation}
for all $i\in[N]$ and any given random vector $x_{\rm{in}}$, one has
\begin{equation}\label{var}
	\mathbb{E}\left[\left\|  x^{i,k}-\bar{x}^{\star,k}\right\| ^2\right] \leq2\prod_{n=0}^{k-1}C_{1,n}{\rm{Var}}(x_{\rm{in}}),
\end{equation}
where $C_{1,n}$ is defined as in \eqref{cn}.
\end{corollary}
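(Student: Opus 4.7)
My plan is to exploit the convex-combination structure of $\bar{x}^{\star,k}$ so that the claim reduces to the pairwise estimate already derived in the proof of Theorem \ref{th-L2}. Writing the CBO weights $w_j^{k}:=\mathrm{e}^{-\beta f(x^{j,k})}/\sum_{l=1}^{N}\mathrm{e}^{-\beta f(x^{l,k})}$, which satisfy $w_j^{k}\in[0,1]$ and $\sum_{j=1}^N w_j^{k}=1$, I would first use the identity $\sum_j w_j^{k}=1$ to write
$$x^{i,k}-\bar{x}^{\star,k}=\sum_{j=1}^{N}w_j^{k}\bigl(x^{i,k}-x^{j,k}\bigr),$$
and then apply Jensen's inequality for the convex function $\|\cdot\|^{2}$ to obtain the pathwise bound
$$\bigl\|x^{i,k}-\bar{x}^{\star,k}\bigr\|^{2}\le\sum_{j=1}^{N}w_j^{k}\bigl\|x^{i,k}-x^{j,k}\bigr\|^{2}.$$

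I would then take expectations on both sides and bring in the recursive pairwise estimate \eqref{cn} from the proof of Theorem \ref{th-L2}, namely
$$\mathbb{E}\bigl[\|x^{i,k}-x^{j,k}\|^{2}\bigr]\le\prod_{n=0}^{k-1}C_{1,n}\,\mathbb{E}\bigl[\|x^{i,0}-x^{j,0}\|^{2}\bigr],\qquad i\ne j.$$
Under the i.i.d.\ hypothesis \eqref{condition2}, the classical identity $\mathbb{E}[\|X-Y\|^{2}]=2\,\mathrm{Var}(X)$ for i.i.d.\ random vectors $X,Y\sim x_{\rm in}$ converts the initial expectation into $2\,\mathrm{Var}(x_{\rm in})$, which is precisely the constant appearing in the right-hand side of the target inequality, and the prefactor $\prod_{n=0}^{k-1}C_{1,n}$ then comes from the recursion.

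The hard part is controlling the expectation $\mathbb{E}\bigl[\sum_{j}w_j^{k}\|x^{i,k}-x^{j,k}\|^{2}\bigr]$, because the random weight $w_j^{k}$ and the squared distance $\|x^{i,k}-x^{j,k}\|^{2}$ are correlated through the common configuration $\{x^{l,k}\}_{l=1}^{N}$ and cannot be factored out of the expectation. My plan is to exploit the permutation symmetry of the ESCBO dynamics together with the i.i.d.\ initialization \eqref{condition2}: the resulting exchangeability forces the joint law of $(w_j^{k},x^{i,k},x^{j,k})$ to be identical for every $j\in[N]\setminus\{i\}$, so that combined with the constraint $\sum_j w_j^{k}=1$ and the elementary bound $w_j^{k}\le 1$, a single pairwise estimate from Theorem \ref{th-L2} controls the entire convex combination without introducing any spurious particle-count factor. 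Because the final bound is uniform in $i$, the same argument delivers the stated bound for every index.
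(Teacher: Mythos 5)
Your opening steps coincide with the paper's own proof: writing $x^{i,k}-\bar{x}^{\star,k}=\sum_{j}w_j^k\bigl(x^{i,k}-x^{j,k}\bigr)$, applying Jensen's inequality for $\|\cdot\|^2$, and using the i.i.d.\ hypothesis to turn the initial pairwise expectation into $2\,{\rm Var}(x_{\rm in})$. The gap sits exactly at the step you flag as the hard part. Exchangeability does give that $\mathbb{E}\bigl[\sum_{j\neq i}w_j^k\|x^{i,k}-x^{j,k}\|^2\bigr]=(N-1)\,\mathbb{E}\bigl[w_{j_0}^k\|x^{i,k}-x^{j_0,k}\|^2\bigr]$ for any fixed $j_0\neq i$, but to conclude with a \emph{single} pairwise estimate from Theorem \ref{th-L2} and no factor of $N$ you would need $\mathbb{E}\bigl[w_{j_0}^k\|x^{i,k}-x^{j_0,k}\|^2\bigr]\leq\frac{1}{N-1}\mathbb{E}\bigl[\|x^{i,k}-x^{j_0,k}\|^2\bigr]$, i.e.\ essentially that the Gibbs weight and the squared distance are nonpositively correlated. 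Neither exchangeability nor the pathwise facts $w_j^k\leq1$ and $\sum_j w_j^k=1$ imply this: the weights concentrate on particles with small $f$-values, and nothing prevents those from being precisely the particles far from $x^{i,k}$ on the realizations where the distances are large, in which case $\mathbb{E}\bigl[w_{j}^k\|x^{i,k}-x^{j,k}\|^2\bigr]$ can exceed $\frac{1}{N-1}\mathbb{E}\bigl[\|x^{i,k}-x^{j,k}\|^2\bigr]$. What the normalization $\sum_j w_j^k=1$ does give is only the pathwise bound by $\max_{j}\|x^{i,k}-x^{j,k}\|^2$, and the expectation of a maximum dominates, rather than is dominated by, the single-pair expectation, so that route does not close the argument either; the crude alternative $w_j^k\leq 1$ termwise reintroduces the factor $N-1$ you are trying to avoid.

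The paper sidesteps the correlation issue differently: it bounds the convex combination pathwise by the diameter $\mathcal{D}(x^k)=\max_{i\neq j}\|x^{i,k}-x^{j,k}\|^2$, propagates the exponential decay of Theorem \ref{th-L2} at the level of $\mathbb{E}\bigl[\mathcal{D}(x^k)\bigr]$ (the recursion behind \eqref{Exij2} and \eqref{cn} applied to the worst pair), and only then uses the i.i.d.\ initialization to evaluate $\mathbb{E}\bigl[\mathcal{D}(x^0)\bigr]=2\,{\rm Var}(x_{\rm in})$. To repair your proposal you should follow this route — work with the diameter (or prove a diameter analogue of \eqref{cn}) and insert the initialization at $k=0$ — rather than attempting to decorrelate the weights from the pairwise distances by symmetry, which is not justified.
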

\begin{proof}
	We apply the definition of $\bar{x}^{\star,k}$ in \eqref{weightedstate} to get
	\begin{equation}\label{Existar}
		\begin{split}
			\mathbb{E}\left[\left\|  x^{i,k}-\bar{x}^{\star,k}\right\| ^2\right]=&	\mathbb{E}\left[\left\|\frac{\sum_{j=1}^N\mathrm{e}^{-\beta f(x^{j,k})}\left(x^{i,k}-x^{j,k}\right)}{\sum_{j=1}^N\mathrm{e}^{-\beta f(x^{j,k})}} \right\|^2 \right]\\
				\leq&\mathbb{E}\left[\left(\frac{\sum_{j=1}^N\mathrm{e}^{-\beta f(x^{j,k})}\left\|x^{i,k}-x^{j,k}\right\|}{\sum_{j=1}^N\mathrm{e}^{-\beta f(x^{j,k})}} \right) ^2\right]\\
				\leq&\mathbb{E}\left[\frac{\sum_{j=1}^N\mathrm{e}^{-\beta f(x^{j,k})}\left\|x^{i,k}-x^{j,k}\right\|^2}{\sum_{j=1}^N\mathrm{e}^{-\beta f(x^{j,k})}}\right]\\
				\leq&\mathbb{E}\left[ \mathcal{D}\left(x^k\right) \right]\\
				\leq&\prod_{n=0}^{k-1}C_{1,n}\mathbb{E}\left[  \mathcal{D}\left( x^0\right) \right],
		\end{split}
	\end{equation}
where the first and the second inequalities follow from triangle inequality and  Cauchy-Schwarz inequality, respectively, and the last inequality is due to \eqref{cn}.
Moreover, by the condition in \eqref{condition2}, we have
\begin{equation}\label{Dx0}
	\begin{split}
		\mathbb{E}\left[  \mathcal{D}\left(x^0\right)\right]=&\mathbb{E}\left[ \max_{i,j\in[N],i\neq j}\left\|x^{i,0}-x^{j,0} \right\|^2 \right]\\
		=&\mathbb{E}\left[ \max_{i,j\in[N],i\neq j}\left( \left\|x^{i,0}\right\|^2+\left\|x^{j,0}\right\|^2-2\left(x^{i,0}\right)^\top x^{j,0}\right)  \right]\\
		=&2\mathbb{E}\left[ \|x_{\rm{in}}\|^2 \right] - 2\left( \mathbb{E}\left[x_{\rm{in}}\right] \right)^2 \\
		=&2{\rm{Var}}(x_{\rm{in}}).
	\end{split}
\end{equation}
We combine \eqref{Existar} and \eqref{Dx0} to obtain the final estimate \eqref{var}.
\end{proof}

In next theorem, we point out the almost sure global consensus of the ESCBO algorithm.
\begin{theorem}\label{th-as}
	Under the assumptions in Theorem \ref{th-L2}, the random process $\{x^{i,k}\}_{i=1}^N$ generated by the ESCBO algorithm admits almost sure global consensus in an exponential rate, i.e. there exists a constant $\tau_2>0$ such that 
	\begin{equation*}
		\lim_{k\to\infty}\mathrm{e}^{\tau_2k}\left\| x^{i,k}-x^{j,k}\right\|^2=0,\quad\mbox{a.s.},~ \forall i,j\in[N].
	\end{equation*}
	\end{theorem}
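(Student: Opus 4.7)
The plan is to bootstrap the exponential $L^2$ decay of Theorem \ref{th-L2} into an almost sure exponential decay via a Markov--Borel-Cantelli argument. The key observation is that the bound \eqref{tau1} already gives a geometric rate with base $\tau_1<1$, so there is room to multiply by an exponential growth factor $\mathrm{e}^{\tau_2 k}$ and still retain summability in $k$.

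I would proceed as follows. Fix a pair $i,j\in[N]$ and choose any constant $\tau_2>0$ small enough that $\tau_2+\tau_1<1$, where $\tau_1$ is the constant appearing in \eqref{tau1} from the proof of Theorem \ref{th-L2}. For an arbitrary $\varepsilon>0$, apply Markov's inequality together with \eqref{tau1} to obtain
\begin{equation*}
\mathbb{P}\left(\mathrm{e}^{\tau_2 k}\left\|x^{i,k}-x^{j,k}\right\|^2\geq\varepsilon\right)\leq\frac{\mathrm{e}^{\tau_2 k}\mathbb{E}\left[\left\|x^{i,k}-x^{j,k}\right\|^2\right]}{\varepsilon}\leq\frac{C_2}{\varepsilon}\,\mathrm{e}^{(\tau_1+\tau_2-1)k}.
\end{equation*}
Since $\tau_1+\tau_2-1<0$, these probabilities form a convergent geometric series, so the Borel-Cantelli lemma (Proposition \ref{pro-BC}) yields
\begin{equation*}
\mathbb{P}\left(\limsup_{k\to\infty}\left\{\mathrm{e}^{\tau_2 k}\left\|x^{i,k}-x^{j,k}\right\|^2\geq\varepsilon\right\}\right)=0.
\end{equation*}

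To remove the dependence on $\varepsilon$, I would apply the usual countable-sequence trick: specialise to $\varepsilon_m=1/m$ for $m\in\mathbb{N}$, and intersect the resulting null events. On the complement (still a full-measure set) one has $\limsup_{k\to\infty}\mathrm{e}^{\tau_2 k}\|x^{i,k}-x^{j,k}\|^2\leq 1/m$ for every $m$, and hence this $\limsup$ equals zero; since the quantity is non-negative, the limit is zero as well. A final (finite) union over the $N(N-1)/2$ pairs $(i,j)$ upgrades the statement to simultaneous almost sure convergence for all pairs, which is exactly the claim. The argument is essentially routine once \eqref{tau1} is available from Theorem \ref{th-L2}; the only step that deserves a little care in the write-up is the order of quantifiers when discharging $\varepsilon$, and the choice of $\tau_2$ so that the exponent $\tau_1+\tau_2-1$ stays strictly negative.
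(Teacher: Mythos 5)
Your proposal is correct and follows essentially the same route as the paper: Markov's inequality applied to the exponentially weighted $L^2$ bound \eqref{tau1} with $\tau_1+\tau_2<1$, summability, and the Borel--Cantelli lemma. The only cosmetic difference is that you discharge the arbitrary $\varepsilon$ by intersecting over $\varepsilon_m=1/m$, whereas the paper invokes Proposition \ref{iff} directly; these are the same argument.
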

\begin{proof}
	By \eqref{tau1}, we can choose a positive constant $\tau_2$ satisfying $\tau_1+\tau_2<1$, then for all $i,j\in[N]$, it holds
	\begin{equation*}
		\mathbb{E}\left[ \mathrm{e}^{\tau_2k}\left\| x^{i,k}-x^{j,k}\right\|^2\right]\leq C_2(\omega)\mathrm{e}^{(\tau_1+\tau_2-1)k},\quad \forall k\geq0.
	\end{equation*}
This yields
\begin{equation*}
	\sum_{k=0}^\infty\mathbb{E}\left[ \mathrm{e}^{\tau_2k}\left\| x^{i,k}-x^{j,k}\right\|^2\right]\leq\sum_{k=0}^\infty C_2(\omega)\mathrm{e}^{(\tau_1+\tau_2-1)k}<\infty.
\end{equation*}
Combining the above inequality with Markov inequality, we have
\begin{equation*}
	\sum_{k=0}^\infty\mathbb{P}\left( \mathrm{e}^{\tau_2k}\left\| x^{i,k}-x^{j,k}\right\|^2\geq\varepsilon \right)\leq\frac{\sum_{k=0}^\infty\mathbb{E}\left[ \mathrm{e}^{\tau_2k}\left\| x^{i,k}-x^{j,k}\right\|^2\right]}{\varepsilon} <\infty,\quad\forall \varepsilon>0.
\end{equation*}
Then, applying the Borel-Cantelli lemma in Proposition \ref{pro-BC}, it holds
\begin{equation}\label{bc}
	\mathbb{P}\left( \bigcap\limits_{k=1}^\infty\bigcup\limits_{n=k}^\infty\left[\mathrm{e}^{\tau_2n}\left\| x^{i,n}-x^{j,n}\right\|^2\geq\varepsilon \right] \right) =0,\quad\forall \varepsilon>0.
\end{equation}
By Proposition \ref{iff}, \eqref{bc} is equivalent to
\begin{equation*}
	\lim_{k\to\infty}\mathrm{e}^{\tau_2k}\left\| x^{i,k}-x^{j,k}\right\|^2=0,\quad\mbox{a.s.}.
\end{equation*}
 Consequently, we have the desired result.
\end{proof}

\subsection{Emergence of the consensus point}\label{sec4.2}
 According to Theorems \ref{th-L2} and \ref{th-as}, $\left\|x^{i,k}-x^{j,k} \right\|$  is shown to decay exponentially in expectation and in almost sure sense, which can not guarantee that the solution process converges to a common point, since it is possible for the solution to approach a limit cycle or display chaotic behavior. In this subsection, we will study the existence of the global consensus point of the ESCBO algorithm.

Let $\{x^{i,k}\}_{i=1}^N$ be a solution process to the ESCBO algorithm. For the following theoretical analysis, we set
\begin{equation}\label{AkBk}
	A^{i,k}:=\sum_{n=0}^k\left\|x^{i,n}-\bar{x}^{\star,n} \right\| \quad \mbox{and} \quad B^{i,k}:= \sum_{n=0}^k\left\|\left( x^{i,n}-\bar{x}^{\star,n}\right)\odot\eta^n  \right\|,
\end{equation}
for all $i\in[N]$ and $k\geq0$.
\begin{lemma}\label{le-mar}
	Consider $\mathcal{F}_k$ defined in \eqref{Fk}, then for all $i\in[N]$, $\left\{\left( A^{i,k},\mathcal{F}_{k-1}\right)\right\}_{k\geq0}$ and $\left\{\left( B^{i,k},\mathcal{F}_{k}\right)\right\}_{k\geq0}$ are submartingales.
\end{lemma}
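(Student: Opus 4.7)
The plan is to verify the three defining requirements for a submartingale from Definition \ref{def-mar}: adaptedness, integrability, and the conditional increment inequality. Both processes are defined as partial sums of nonnegative random terms, so the submartingale inequality will follow almost immediately once adaptedness and integrability are in place. The only part requiring care is making sure the adaptedness is matched correctly to the filtration stated in each case.

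For $\{(A^{i,k},\mathcal{F}_{k-1})\}$, I would first recall that by construction $x^{i,n}$ is $\mathcal{F}_{n-1}$-measurable for every $i\in[N]$ and $n\geq 0$, hence so is the weighted average $\bar{x}^{\star,n}$ (which is a continuous function of $x^{1,n},\ldots,x^{N,n}$ through \eqref{weightedstate}). Therefore each summand $\|x^{i,n}-\bar{x}^{\star,n}\|$ lies in $\mathcal{F}_{n-1}\subseteq\mathcal{F}_{k-1}$ for $n\leq k$, and $A^{i,k}$ is $\mathcal{F}_{k-1}$-measurable. Integrability $\mathbb{E}[|A^{i,k}|]<\infty$ comes from the fact that $\mathbb{E}\bigl[\|x^{i,n}-\bar{x}^{\star,n}\|\bigr]\leq\bigl(\mathbb{E}[\|x^{i,n}-\bar{x}^{\star,n}\|^2]\bigr)^{1/2}$ by Jensen's inequality, and the right-hand side is finite for every fixed $n$ by the estimate \eqref{var} (or, more directly, by \eqref{cn} and the triangle inequality). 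For the submartingale inequality, write $A^{i,k+1}=A^{i,k}+\|x^{i,k+1}-\bar{x}^{\star,k+1}\|$; the new term is nonnegative and $\mathcal{F}_k$-measurable, so
\begin{equation*}
\mathbb{E}[A^{i,k+1}\mid\mathcal{F}_k]=A^{i,k}+\|x^{i,k+1}-\bar{x}^{\star,k+1}\|\geq A^{i,k}.
\end{equation*}

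For $\{(B^{i,k},\mathcal{F}_k)\}$, the term $(x^{i,n}-\bar{x}^{\star,n})\odot\eta^n$ involves $x^{i,n}-\bar{x}^{\star,n}\in\mathcal{F}_{n-1}$ and $\eta^n\in\mathcal{F}_n$, so each summand is $\mathcal{F}_n$-measurable and hence $B^{i,k}\in\mathcal{F}_k$. For integrability I would use $\|a\odot b\|\leq\|a\|\|b\|$ together with the independence between $\eta^n$ and $\mathcal{F}_{n-1}$ (so that $\eta^n$ is independent of $x^{i,n}-\bar{x}^{\star,n}$), giving
\begin{equation*}
\mathbb{E}\bigl[\|(x^{i,n}-\bar{x}^{\star,n})\odot\eta^n\|\bigr]\leq\mathbb{E}\bigl[\|x^{i,n}-\bar{x}^{\star,n}\|\bigr]\,\mathbb{E}\bigl[\|\eta^n\|\bigr],
\end{equation*}
and both factors are finite (the second because $\eta^n$ is Gaussian by \eqref{eta}). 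The submartingale inequality again follows from the decomposition $B^{i,k+1}=B^{i,k}+\|(x^{i,k+1}-\bar{x}^{\star,k+1})\odot\eta^{k+1}\|$ and nonnegativity of the added term.

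The main obstacle is really only bookkeeping: one must be careful that in the first case the natural filtration is $\mathcal{F}_{k-1}$ rather than $\mathcal{F}_k$ (because $\bar{x}^{\star,k}$ and $x^{i,k}$ are already determined by the noise up to time $k-1$), whereas in the second case the presence of $\eta^n$ in the summand forces the filtration index to match. Everything else reduces to the trivial observation that a partial sum of nonnegative, integrable, adapted random variables forms a submartingale with respect to the filtration generated by its summands.
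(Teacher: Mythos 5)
Your proposal is correct and follows essentially the same route as the paper: adaptedness of $A^{i,k}$ to $\mathcal{F}_{k-1}$ and of $B^{i,k}$ to $\mathcal{F}_k$ via the update rules, plus nonnegativity of each new summand to get the conditional-expectation inequality (your conditioning on $\mathcal{F}_k$ for the $A$-process is harmless, since the tower property, or simply the pointwise monotonicity $A^{i,k+1}\geq A^{i,k}$, gives the required inequality with respect to $\mathcal{F}_{k-1}$). The only difference is that you also verify integrability explicitly, which the paper leaves implicit here and effectively handles later in Lemma \ref{le-bound}.
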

\begin{proof}
	Following \eqref{updatestate1} and \eqref{updatestate2}, it is clear that
	\begin{equation}\label{adapt}
		A^{i,k}\in\mathcal{F}_{k-1} \quad\mbox{and}\quad B^{i,k}\in\mathcal{F}_{k}.
	\end{equation}
In addition, for all $i\in[N]$ and $k\geq0$, it yields
\begin{equation*}
		\mathbb{E}\left[ A^{i,k+1}\big|\mathcal{F}_{k-1}\right]=\mathbb{E}\left[ A^{i,k}\big|\mathcal{F}_{k-1}\right]+\mathbb{E}\left[\left\| x^{i,k+1}-\bar{x}^{\star,k+1}\right\|\Big|\mathcal{F}_{k-1}\right]\geq A^{i,k},
\end{equation*}
where the last inequality follows from \eqref{adapt}.
Again, the proof for random sequence $B^{i,k}$, which is analogous to $A^{i,k}$, is provided as follows.
\begin{equation*}
		\mathbb{E}\left[ B^{i,k+1}\big|\mathcal{F}_{k}\right]
		=\mathbb{E}\left[ B^{i,k}\big|\mathcal{F}_{k}\right]+\mathbb{E}\left[\left\|\left(  x^{i,k+1}-\bar{x}^{\star,k+1}\right)\odot\eta^{k+1}\right\| \Big|\mathcal{F}_{k}\right]
		\geq B^{i,k}.
\end{equation*}
Therefore, we know from Definition \ref{def-mar} that $\left\{\left( A^{i,k},\mathcal{F}_{k-1}\right)\right\}_{k\geq0}$ and $\left\{\left( B^{i,k},\mathcal{F}_k\right)\right\}_{k\geq0}$ are submartingales.
\end{proof}
\begin{lemma}\label{le-bound}
	Under the assumptions in Theorem \ref{th-L2}, the expectations of random sequences $A^{i,k}$ and $B^{i,k}$ are uniformly bounded for all $k\geq0$, that is, 
	\begin{equation*}
		\sup_{k}\mathbb{E}\left[ A^{i,k}\right]<\infty\quad \mbox{and}\quad  \sup_{k}\mathbb{E}\left[ B^{i,k}\right]<\infty,\quad\forall i\in[N].
	\end{equation*}
\end{lemma}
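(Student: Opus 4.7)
The plan is to show that both $\mathbb{E}[\|x^{i,n}-\bar{x}^{\star,n}\|]$ and $\mathbb{E}[\|(x^{i,n}-\bar{x}^{\star,n})\odot\eta^n\|]$ decay geometrically in $n$, which makes the partial sums defining $A^{i,k}$ and $B^{i,k}$ dominated by convergent geometric series uniformly in $k$. The exponential $L^2$ decay already obtained in Theorem \ref{th-L2} is the key input; I just need to transfer it to the $L^1$ level and then handle the extra $\eta^n$ factor in $B^{i,k}$ by independence.

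First I would bound $\mathbb{E}[\|x^{i,n}-\bar{x}^{\star,n}\|^2]$ by reusing the chain of inequalities already appearing in \eqref{Existar}: writing $\bar{x}^{\star,n}$ as the weighted average and applying Cauchy–Schwarz yields $\mathbb{E}[\|x^{i,n}-\bar{x}^{\star,n}\|^2]\le \mathbb{E}[\mathcal{D}(x^n)]\le N^2 \max_{i,j\in[N]}\mathbb{E}[\|x^{i,n}-x^{j,n}\|^2]$. By Theorem \ref{th-L2}, the right-hand side is bounded by $C\,\tau_1^n$ for some constant $C$ depending on $N$, $C_2$, and the initial spread. Then Jensen's inequality gives the $L^1$ bound $\mathbb{E}[\|x^{i,n}-\bar{x}^{\star,n}\|]\le \sqrt{C}\,\tau_1^{n/2}$, and since $\sqrt{\tau_1}<1$,
\begin{equation*}
\mathbb{E}[A^{i,k}]=\sum_{n=0}^{k}\mathbb{E}\bigl[\|x^{i,n}-\bar{x}^{\star,n}\|\bigr]\le \sqrt{C}\sum_{n=0}^{\infty}\tau_1^{n/2}=\frac{\sqrt{C}}{1-\sqrt{\tau_1}}<\infty,
\end{equation*}
uniformly in $k$.

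Next, for $B^{i,k}$ I would use the pointwise inequality $\|a\odot b\|^2=\sum_l a_l^2 b_l^2\le \|a\|^2\|b\|^2$, giving $\|(x^{i,n}-\bar{x}^{\star,n})\odot\eta^n\|\le \|x^{i,n}-\bar{x}^{\star,n}\|\cdot\|\eta^n\|$. The essential observation is that, by the recursive structure \eqref{updatestate1}–\eqref{updatestate2} together with \eqref{Fk}, both $x^{i,n}$ and $\bar{x}^{\star,n}$ are $\mathcal{F}_{n-1}$-measurable and hence independent of $\eta^n$. Therefore
\begin{equation*}
\mathbb{E}\bigl[\|(x^{i,n}-\bar{x}^{\star,n})\odot\eta^n\|\bigr]\le \mathbb{E}\bigl[\|x^{i,n}-\bar{x}^{\star,n}\|\bigr]\,\mathbb{E}[\|\eta^n\|],
\end{equation*}
and since $\eta^n_l\sim\mathcal{N}(0,\delta^2)$ i.i.d., Jensen gives $\mathbb{E}[\|\eta^n\|]\le\sqrt{\mathbb{E}[\|\eta^n\|^2]}=\delta\sqrt{d}$. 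Combining with the bound from the previous step, $\mathbb{E}[B^{i,k}]$ is dominated by $\delta\sqrt{d}$ times the same convergent geometric series, so $\sup_k\mathbb{E}[B^{i,k}]<\infty$ as required.

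The only subtle point is the downgrade from $L^2$ to $L^1$ decay, where the exponent $\tau_1^n$ becomes $\tau_1^{n/2}$ after taking the square root; summability is still preserved because $\tau_1<1$ was chosen strictly, so there is no real difficulty. Everything else is a direct application of measurability with respect to $\mathcal{F}_{n-1}$, independence of the noise $\eta^n$, and the geometric estimate already established in Theorem \ref{th-L2}, so I do not anticipate a substantial obstacle.
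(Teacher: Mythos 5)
Your proposal is correct and follows essentially the same route as the paper: pass from the exponential $L^2$ consensus decay of Theorem \ref{th-L2} to an $L^1$ bound via Jensen's inequality and the estimate $\mathbb{E}[\|x^{i,n}-\bar{x}^{\star,n}\|^2]\leq\mathbb{E}[\mathcal{D}(x^n)]$, then dominate $\mathbb{E}[A^{i,k}]$ and $\mathbb{E}[B^{i,k}]$ by a convergent geometric series uniformly in $k$. The only cosmetic difference is in $B^{i,k}$: you use $\|a\odot\eta^n\|\leq\|a\|\,\|\eta^n\|$ together with independence at the $L^1$ level (constant $\delta\sqrt{d}$), whereas the paper computes $\mathbb{E}[\|(x^{i,n}-\bar{x}^{\star,n})\odot\eta^n\|^2]=\delta^2\mathbb{E}[\|x^{i,n}-\bar{x}^{\star,n}\|^2]$ exactly componentwise before applying Jensen (constant $\delta$), which makes no difference to the conclusion.
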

\begin{proof}
	Using the definition of $A^{i,k}$ in \eqref{AkBk}, we have
	\begin{equation}\label{EA}
		\mathbb{E}\left[ A^{i,k}\right]=\mathbb{E}\left[\sum_{n=0}^k\left\|x^{i,n}-\bar{x}^{\star,n} \right\| \right]\leq\sum_{n=0}^k\sqrt{\mathbb{E}\left[\left\|x^{i,n}-\bar{x}^{\star,n} \right\|^2 \right] }\leq  \sum_{n=0}^k\sqrt{\mathbb{E}\left[\mathcal{D}\left(x^n \right) \right] },		
	\end{equation}
where the first inequality is due to Jensen's inequality for $\left(\mathbb{E}[\cdot]\right)^2$, and the last inequality follows from \eqref{Existar}. As in the proof of Theorem \ref{th-L2}, the expectation of diameter process $\mathbb{E}\left[ \mathcal{D}(x^n)\right] $ decays to zero exponentially. Hence, the relations in \eqref{EA} imply
\begin{equation*}
	\sup_{k}\mathbb{E}\left[ A^{i,k}\right]\leq\sum_{n=0}^\infty\sqrt{\mathbb{E}\left[\mathcal{D}\left(x^n \right) \right]}<\infty,\quad\forall i\in[N].
\end{equation*}
In addition, we calculate the square of each term in $B^{i,k}$ as
\begin{equation*}
	\left\|\left( x^{i,n}-\bar{x}^{\star,n}\right)\odot\eta^n  \right\|^2=\sum_{l=1}^d\left( \left( x^{i,n}_l-\bar{x}^{\star,n}_l\right)\eta^n_l \right) ^2.
\end{equation*}
Taking expectation on both side of the above equation and following \eqref{eta} and \eqref{Existar}, we obtain
\begin{equation}\label{Exeta}
	\begin{split}
		\mathbb{E}\left[ \left\|\left( x^{i,n}-\bar{x}^{\star,n}\right)\odot\eta^n  \right\|^2\right]=& \sum_{l=1}^d\mathbb{E}\left[\left( \left( x^{i,n}_l-\bar{x}^{\star,n}_l\right)\eta^n_l \right) ^2\right] 
		=\delta^2\sum_{l=1}^d\mathbb{E}\left[ \left( x^{i,n}_l-\bar{x}^{\star,n}_l\right)^2\right]\\
		=&\delta^2\mathbb{E}\left[\left\|x^{i,n}-\bar{x}^{\star,n} \right\|^2  \right]
		\leq\delta^2\mathbb{E}\left[ \mathcal{D}\left(x^n\right)\right] .	
	\end{split}
\end{equation}
By Jensen's inequality and \eqref{Exeta}, one has
\begin{equation}\label{EB}
	\begin{split}
		\mathbb{E}\left[ B^{i,k}\right] =&\sum_{n=0}^k\mathbb{E}\left[ \left\|\left( x^{i,n}-\bar{x}^{\star,n}\right)\odot\eta^n  \right\|\right]\\
		\leq&\sum_{n=0}^k\sqrt{\mathbb{E}\left[ \left\|\left( x^{i,n}-\bar{x}^{\star,n}\right)\odot\eta^n  \right\|^2\right]}\leq\delta\sum_{n=0}^k\sqrt{\mathbb{E}\left[ \mathcal{D}\left(x^n\right)\right]}.
	\end{split}
\end{equation}
Thus, similar to the proof of $\mathbb{E}\left[A^{i,k} \right] $, by \eqref{EB}, we have
\begin{equation*}
	\sup_{k}\mathbb{E}\left[ B^{i,k}\right]\leq\delta\sum_{n=0}^\infty\sqrt{\mathbb{E}\left[\mathcal{D}\left(x^n \right) \right]}<\infty,\quad\forall i\in[N].
\end{equation*}
\end{proof}

Following the above two lemmas, we obtain our main result on the uniqueness of the global consensus point to the ESCBO algorithm.

\begin{assumption}\label{ass2}
	The parameters $\lambda$, $\delta$ and step sizes $\{\alpha_k\}_{k\geq0}$ in the ESCBO algorithm satisfy
	\begin{equation*}
		(1-\lambda)^2+\delta^2<\frac{1}{2}\quad\mbox{and}\quad \sum_{k=0}^\infty\alpha_k<\infty.
	\end{equation*}
\end{assumption}

\begin{theorem}\label{th-xinfty}
	Let the random process $\{x^{i,k}\}_{i=1}^N$ be generated by the ESCBO algorithm and suppose that Assumptions \ref{ass1} and \ref{ass2} are satisfied. 
Then, there exists a random vector $x_\infty$ such that
\begin{equation*}
	\lim_{k\to\infty}x^{i,k}=x_\infty,\quad{\mbox{a.s.}},~\forall i\in[N].
\end{equation*}
\end{theorem}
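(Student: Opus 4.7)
The plan is to express $x^{i,k}$ as a telescoping series and then show that each contribution in the series converges almost surely. Summing the update rules \eqref{updatestate1}--\eqref{updatestate2} from $n=0$ to $k-1$ yields
\begin{equation*}
x^{i,k} = x^{i,0} - \lambda\sum_{n=0}^{k-1}\bigl(x^{i,n}-\bar{x}^{\star,n}\bigr) - \sum_{n=0}^{k-1}\bigl(x^{i,n}-\bar{x}^{\star,n}\bigr)\odot\eta^n - \sum_{n=0}^{k-1}\alpha_n g^{i,n},
\end{equation*}
so almost sure convergence of $x^{i,k}$ reduces to the almost sure convergence of the three sums on the right-hand side.

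For the first sum, note that $A^{i,k}=\sum_{n=0}^k\|x^{i,n}-\bar{x}^{\star,n}\|$ is a nonnegative submartingale with respect to $\{\mathcal{F}_{k-1}\}$ by Lemma \ref{le-mar}, and has uniformly bounded expectation by Lemma \ref{le-bound}. Applying Doob's martingale convergence theorem (Theorem \ref{th-martingale}) then gives that $A^{i,k}$ has an almost sure finite limit, so $\sum_{n=0}^{\infty}\|x^{i,n}-\bar{x}^{\star,n}\|<\infty$ a.s., and hence the first sum converges absolutely almost surely. An identical argument, now using the submartingale $\{(B^{i,k},\mathcal{F}_k)\}$ together with $\sup_k\mathbb{E}[B^{i,k}]<\infty$, shows that the second sum converges absolutely almost surely.

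For the third sum, Lemma \ref{le-fd} provides the deterministic bound $\|g^{i,n}\|\leq M_g$ for every $n$ and $i$. Since Assumption \ref{ass2} asserts $\sum_{n=0}^{\infty}\alpha_n<\infty$, the series $\sum_{n=0}^{\infty}\alpha_n g^{i,n}$ converges absolutely, deterministically in fact. Combining the three, we conclude that $x^{i,k}$ converges almost surely to some random vector $x^{i,\infty}$ for each $i\in[N]$.

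It remains to argue that this limit is the same for all particles. Theorem \ref{th-as} provides almost sure exponential consensus $\|x^{i,k}-x^{j,k}\|\to 0$ for every pair $i,j$, and passing to the limit inside the almost sure convergence yields $x^{i,\infty}=x^{j,\infty}$ on an event of full probability. Defining $x_\infty$ as this common limit finishes the proof. The main technical subtlety I anticipate is the rigorous handover from absolute convergence of the submartingales $A^{i,k}, B^{i,k}$ (which control scalar $\ell_2$-norms) to convergence of the corresponding vector-valued series, but this is routine since absolute summability of $\|\cdot\|$ implies summability in the Banach space $\mathbb{R}^d$; modulo this, the argument is essentially an application of Doob's theorem plus the summability of $\{\alpha_n\}$.
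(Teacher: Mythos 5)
Your proposal is correct and follows essentially the same route as the paper: both decompose the increments via the update rule, control the three contributions through $A^{i,k}$, $B^{i,k}$ and the $\alpha_k$-weighted gradient terms using Lemmas \ref{le-mar}, \ref{le-bound}, the bound $\|g^{i,k}\|\leq M_g$ with $\sum_k\alpha_k<\infty$, invoke Doob's theorem (the paper phrases this as an a.s.\ Cauchy argument rather than absolute summability of the series, and also treats the gradient sum as a submartingale $G^{i,k}$, but this is the same idea), and finish by identifying the common limit via the almost sure consensus of Theorem \ref{th-as}.
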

\begin{proof}
	To begin with, we denote $G^{i,k}:=\sum_{n=0}^k\alpha_n\left\| g^{i,n}\right\|$.
	Note that \eqref{updatestate1} and \eqref{updatestate2} can be equivalently rewritten as
	\begin{equation}\label{xk+1}
		x^{i,k+1}-x^{i,k}=-\lambda\left( x^{i,k}-\bar{x}^{\star,k}\right)- \left( x^{i,k}-\bar{x}^{\star,k}\right)\odot\eta^k-\alpha_kg^{i,k},\quad\forall k\geq0.
	\end{equation}
Then, for all $m,n\geq0$, we have 
\begin{equation}\label{xk0}
	\begin{split}
		\left\| x^{i,m}-x^{i,n}\right\| &\leq\sum_{k=n}^{m-1}\left\| x^{i,k+1}-x^{i,k}\right\|\\
		&\leq\sum_{k=n}^{m-1}\lambda \left\| x^{i,k}-\bar{x}^{\star,k} \right\| +\sum_{k=n}^{m-1}\left\| \left( x^{i,k}-\bar{x}^{\star,k}\right)\odot\eta^k\right\|+\sum_{k=n}^{m-1}\alpha_k\left\| g^{i,k}\right\| \\
		&=\lambda \left(A^{i,m-1}-A^{i,n-1} \right) +\left(B^{i,m-1}-B^{i,n-1} \right)+\left(G^{i,m-1}-G^{i,n-1} \right).
	\end{split}
\end{equation}
Similar to the proof in Lemma \ref{le-mar}, it is clear that $\left\{\left( G^{i,k},\mathcal{F}_{k-1}\right)\right\}_{k\geq0}$ is a submartingale. 
Moreover, following from \eqref{bound} and Assumption \ref{ass2}, we have
\begin{equation*}
	\sup_{k}\mathbb{E}\left[ G^{i,k}\right]\leq\sum_{n=0}^{\infty}\alpha_n\mathbb{E}\left[ \left\| g^{i,n}\right\|\right] \leq M_g\sum_{n=0}^{\infty}\alpha_n<\infty,
\end{equation*}
which together with Lemmas \ref{le-mar} and \ref{le-bound} yields that $\left\{\left( A^{i,k},\mathcal{F}_{k-1}\right)\right\}_{k\geq0}$, $\left\{\left( B^{i,k},\mathcal{F}_k\right)\right\}_{k\geq0}$ and $\left\{\left( G^{i,k},\mathcal{F}_{k-1}\right)\right\}_{k\geq0}$ are uniformly bounded submartingales. Therefore, by Doob's martingale convergence theorem in Theorem \ref{th-martingale}, there exist random variables $A^i_\infty$, $B^i_\infty$ and $G^i_\infty$ such that
\begin{equation*}
	\lim_{k\to\infty}A^{i,k}=A^i_\infty, \quad \lim_{k\to\infty}B^{i,k}=B^i_\infty,\quad \lim_{k\to\infty}G^{i,k}=G^i_\infty,\quad{\mbox{a.s.}},~\forall i\in[N].
\end{equation*}
Thus, $\left\lbrace A^{i,k}\right\rbrace_{k\geq0}$, $\left\lbrace B^{i,k}\right\rbrace_{k\geq0}$ and $\left\lbrace G^{i,k}\right\rbrace_{k\geq0}$ are Cauchy for a.s. $\omega\in\Omega$. For any given $\varepsilon>0$, there exists a sufficiently large $K$ such that for all $m,n>K$, 
\begin{equation}\label{cauchy}
	\max\left\lbrace\left|A^{i,m}-A^{i,n} \right|, \left|B^{i,m}-B^{i,n} \right|, \left|G^{i,m}-G^{i,n} \right| \right\rbrace\leq\varepsilon, \quad{\mbox{a.s.}},~\forall i\in[N].  
\end{equation}
Therefore, \eqref{xk0} and \eqref{cauchy} imply that $\{x^{i,k}\}_{k\geq0}$ is Cauchy. Then, there exists a $x^i_\infty$ such that $\lim_{k\to\infty}x^{i,k}=x^i_\infty$, a.s., for all $i\in[N]$. This  along with the global consensus result in Theorem \ref{th-as} deduces that there exists a random vector $x_\infty$ such that $x^{i,k}$ converges to $x_\infty$ a.s. for all $i\in[N]$.
\end{proof}

\begin{corollary}
	Let the assumptions in Theorem \ref{th-xinfty} hold. Then,
	\begin{itemize}
		\item[{$(i)$}] there exists a positive constant $M_f$ such that for all $i\in[N]$ and $k\geq0$, $$\left| f(x^{i,k})\right| \leq M_f,\quad \mbox{a.s.};$$
		\item[{$(ii)$}] $\lim_{k\to\infty}\bar{x}^{\star,k}=x_\infty$, a.s., where $x^\infty$ is the same as in Theorem \ref{th-xinfty}.
	\end{itemize}
\end{corollary}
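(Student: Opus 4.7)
The plan is to derive both statements directly from the almost sure convergence $x^{i,k}\to x_\infty$ established in Theorem \ref{th-xinfty}, together with the continuity of $f$. No new probabilistic machinery is needed; the work amounts to passing to limits carefully along the sample paths for which convergence holds.

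For part $(i)$, I would first invoke Assumption \ref{ass1}, which guarantees that $f$ is (Lipschitz) continuous, to obtain
\begin{equation*}
\bigl|f(x^{i,k})-f(x_\infty)\bigr|\le L_f\,\|x^{i,k}-x_\infty\|\xrightarrow{k\to\infty}0,\quad\text{a.s.},
\end{equation*}
using Theorem \ref{th-xinfty}. Since every almost surely convergent real sequence is almost surely bounded, for a.s.\ $\omega\in\Omega$ the quantity $\sup_{k\ge 0}|f(x^{i,k}(\omega))|$ is finite; taking the maximum over the finite index set $i\in[N]$ preserves finiteness, so one may define
\begin{equation*}
M_f(\omega):=\max_{i\in[N]}\sup_{k\ge 0}\bigl|f(x^{i,k}(\omega))\bigr|,
\end{equation*}
which is almost surely finite. (This is the sense in which $M_f$ should be read: an a.s.-finite constant along sample paths, since Assumption \ref{ass1} alone, with unrestricted initial data, does not yield a deterministic uniform bound.)

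For part $(ii)$, I would apply a continuous mapping argument to the weighted-average formula \eqref{weightedstate}. Consider the map
\begin{equation*}
\Phi(y^1,\ldots,y^N)=\frac{\sum_{i=1}^N y^i\,\mathrm{e}^{-\beta f(y^i)}}{\sum_{i=1}^N \mathrm{e}^{-\beta f(y^i)}},
\end{equation*}
which is continuous on $(\mathbb{R}^d)^N$ because the denominator is strictly positive and $f$ is continuous. Theorem \ref{th-xinfty} gives $(x^{1,k},\ldots,x^{N,k})\to(x_\infty,\ldots,x_\infty)$ a.s., so the continuity of $\Phi$ yields
\begin{equation*}
\bar{x}^{\star,k}=\Phi(x^{1,k},\ldots,x^{N,k})\xrightarrow{k\to\infty}\Phi(x_\infty,\ldots,x_\infty)=\frac{N x_\infty\,\mathrm{e}^{-\beta f(x_\infty)}}{N\,\mathrm{e}^{-\beta f(x_\infty)}}=x_\infty,\quad\text{a.s.}
\end{equation*}

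There is no serious obstacle: the only subtlety is the interpretation of the constant $M_f$ in $(i)$, and the observation in $(ii)$ that the denominator of $\Phi$ is bounded away from zero along almost every sample path (uniformly in $k$, since by $(i)$ it dominates $N\mathrm{e}^{-\beta M_f}>0$ almost surely), which legitimizes passing to the limit in the quotient. Both conclusions therefore follow as immediate corollaries of the a.s.\ convergence result.
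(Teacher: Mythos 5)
Your proposal is correct and follows essentially the same route as the paper: part $(i)$ is obtained from the a.s.\ convergence of $\{x^{i,k}\}$ (hence a.s.\ boundedness) combined with the continuity of $f$, and part $(ii)$ is obtained by passing to the limit in the weighted-average formula \eqref{weightedstate}, which is exactly what the paper means by calling it a direct consequence of \eqref{weightedstate} and Theorem \ref{th-xinfty}. Your side remark that $M_f$ should be read as an a.s.-finite, sample-path-dependent bound is consistent with the paper's own argument, which likewise only asserts boundedness with probability $1$.
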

\begin{proof}
	$(i)$ From Theorem \ref{th-xinfty}, it implies that the random process $\{x^{i,k}\}_{i=1}^N$ is bounded with probability $1$. This along with the continuity of $f$ leads to the boundedness of $f(x^{i,k})$.
	
	$(ii)$ This assertion is a direct consequence of \eqref{weightedstate} and Theorem \ref{th-xinfty}.
\end{proof}

\section{Almost sure convergence}\label{sec5}
In this section, we provide the analysis of the optimal gap at the global consensus point, namely $\left| {\rm ess}\inf f(x_\infty)-f(x^*)\right| $. We prove that given suitable assumptions concerning the parameters and the initial distribution of particles in the ESCBO algorithm, this error estimate is bounded by a quantity based on the parameter $\beta$.

\subsection{Convergence analysis of the ESCBO algorithm}\label{sec5.1}
We first consider the case that $f$ is Lipschitz continuous, that is, Assumption \ref{ass1} holds. Hereinafter, we make use of the following additional assumption.

\begin{assumption}\label{ass3}
Let $x_{\rm{in}}$ be a reference random variable with a law $\rho$ that satisfies $\rho\in\mathcal{P}_{ac}(\mathbb{R}^d)$. The initial data of the ESCBO algorithm satisfies that $\{x^{i,0}\}$ are i.i.d. with $x^{i,0}\sim x_{\rm{in}}$ for each $i\in[N]$.
\end{assumption}
To ease notation, let $f^*:=f(x^*)$. And we now derive the error estimate between the almost sure limit of objective function values along the ESCBO algorithm and the global minimum of problem \eqref{problem}.

\begin{theorem}\label{th-err}
	 Assume that Assumptions \ref{ass0}-\ref{ass3} are satisfied. For some $\epsilon\in(0,1)$, if the parameters $\{\beta, \lambda, \delta\}$, step sizes $\{\alpha_k\}_{k\geq0}$ and the initial distribution in the ESCBO algorithm satisfy
	\begin{equation}\label{condition4}
		(1-\epsilon)\mathbb{E}\left[ \mathrm{e}^{-\beta f(x_{\rm{in}})}\right] \geq \beta L_fC_3\mathrm{e}^{-\beta f^*},
	\end{equation}
 where $C_3=\sum_{n=0}^{\infty}\left((\lambda+\delta)\sqrt{2\prod_{m=0}^{n-1}C_{1,m}{\rm{Var}}(x_{\rm{in}})}+\alpha_nM_g \right)$, and $C_{1,m}$ is defined in \eqref{cn}. Then, it holds
\begin{equation*}
	\left| {\rm ess}\inf_{\omega\in\Omega}f(x_\infty(\omega))-f^*\right| \leq E(\beta)\quad\mbox{and}\quad \lim_{\beta\to\infty}E(\beta)=0,
\end{equation*}
where $x_\infty$ is the global consensus point defined in Theorem \ref{th-xinfty}.
\end{theorem}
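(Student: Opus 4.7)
The plan is to apply Laplace's principle (Proposition \ref{Laplace}) to the initial distribution $\rho$ of $x_{\rm in}$, after carefully controlling how far each particle can drift from its starting position. The central quantitative inequality will be a lower bound of the form
$$\mathbb{E}\bigl[e^{-\beta f(x^{i,k})}\bigr]\geq \mathbb{E}\bigl[e^{-\beta f(x_{\rm in})}\bigr]-\beta L_f e^{-\beta f^*}\,\mathbb{E}\bigl[\|x^{i,k}-x^{i,0}\|\bigr],$$
obtained pointwise from the mean value theorem applied to $t\mapsto e^{-\beta t}$: since any intermediate value lies above $f^*$, one has $|e^{-\beta f(x)}-e^{-\beta f(y)}|\leq \beta e^{-\beta f^*}|f(x)-f(y)|\leq \beta L_f e^{-\beta f^*}\|x-y\|$ by Assumption \ref{ass1}.

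The first main step establishes the uniform drift estimate $\mathbb{E}[\|x^{i,k}-x^{i,0}\|]\leq C_3$. Telescoping \eqref{xk+1}, applying the triangle inequality and taking expectations gives
$$\mathbb{E}[\|x^{i,k}-x^{i,0}\|]\leq\sum_{n=0}^{k-1}\Bigl(\lambda\mathbb{E}[\|x^{i,n}-\bar x^{\star,n}\|]+\mathbb{E}[\|(x^{i,n}-\bar x^{\star,n})\odot\eta^n\|]+\alpha_n\mathbb{E}[\|g^{i,n}\|]\Bigr).$$
Each term is already handled in the excerpt: Jensen's inequality together with \eqref{var} controls the first by $\sqrt{2\prod_{m=0}^{n-1}C_{1,m}\,{\rm Var}(x_{\rm in})}$, the computation leading to \eqref{Exeta} together with Jensen inflates the same quantity by $\delta$ for the second, and \eqref{bound} gives $\mathbb{E}[\|g^{i,n}\|]\leq M_g$ for the third. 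Summing the majorant over $n\in\mathbb{N}$ produces precisely $C_3$, which is finite because Assumption \ref{ass2} forces geometric decay of $\prod_{m<n}C_{1,m}$ and summability of $\{\alpha_n\}$.

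Combining the previous two ingredients and invoking \eqref{condition4} yields
$$\mathbb{E}[e^{-\beta f(x^{i,k})}]\geq \mathbb{E}[e^{-\beta f(x_{\rm in})}]-\beta L_f C_3 e^{-\beta f^*}\geq \epsilon\,\mathbb{E}[e^{-\beta f(x_{\rm in})}].$$
Theorem \ref{th-xinfty} gives $x^{i,k}\to x_\infty$ almost surely, so dominated convergence (with envelope $1$) transfers the bound to the limit: $\mathbb{E}[e^{-\beta f(x_\infty)}]\geq\epsilon\,\mathbb{E}[e^{-\beta f(x_{\rm in})}]$. Since $\mathbb{E}[e^{-\beta f(x_\infty)}]\leq{\rm ess\,sup}\,e^{-\beta f(x_\infty)}=e^{-\beta\,{\rm ess\,inf}_\omega f(x_\infty(\omega))}$, taking $-\beta^{-1}\log$ and subtracting $f^*$ (the lower bound ${\rm ess\,inf}\,f(x_\infty)\geq f^*$ being immediate from $f\geq f^*$) gives
$$0\leq {\rm ess\,inf}_\omega f(x_\infty(\omega))-f^*\leq \left(-\frac{1}{\beta}\log\mathbb{E}[e^{-\beta f(x_{\rm in})}]-f^*\right)-\frac{\log\epsilon}{\beta}=:E(\beta).$$
Proposition \ref{Laplace}, applicable because $\rho\in\mathcal{P}_{ac}(\mathbb{R}^d)$ and $x^*$ is the unique minimizer (Assumption \ref{ass0}), sends the first summand to $0$, while $-\log\epsilon/\beta\to 0$ trivially, so $E(\beta)\to 0$ as claimed.

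The main obstacle I anticipate is the bookkeeping in the drift estimate so that the majorant matches $C_3$ exactly and the infinite series converges under Assumption \ref{ass2}; once that is secured, the mean value theorem step and the passage to the limit via Laplace's principle are routine under the assumptions in force.
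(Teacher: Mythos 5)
Your proposal is correct and follows essentially the same route as the paper: the same per-step drift bound $(\lambda+\delta)\sqrt{2\prod_{m=0}^{n-1}C_{1,m}{\rm Var}(x_{\rm in})}+\alpha_nM_g$ summed to $C_3$, the same Lipschitz-type control of $\mathrm{e}^{-\beta f}$ with the factor $\beta L_f\mathrm{e}^{-\beta f^*}$ (the paper via $a\le \mathrm{e}^a-1$, you via the mean value theorem, which is the same estimate), the same use of \eqref{condition4}, the ess-inf bound, and Laplace's principle. The only differences are cosmetic — you bound the total displacement $\|x^{i,k}-x^{i,0}\|$ per particle instead of telescoping the averaged exponentials, and you make explicit the dominated-convergence passage to the limit and the trivial lower bound ${\rm ess}\inf f(x_\infty)\ge f^*$, which the paper leaves implicit.
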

\begin{proof}
	To begin with, for all $k\geq0$, we deduce
	\begin{equation}\label{exk+1}
		\begin{split}
			\frac{1}{N}\sum_{i=1}^N\mathrm{e}^{-\beta f(x^{i,k+1})}-\frac{1}{N}\sum_{i=1}^N\mathrm{e}^{-\beta f(x^{i,k})}
			=&\frac{1}{N}\sum_{i=1}^N\mathrm{e}^{-\beta f(x^{i,k})}\left( \mathrm{e}^{-\beta \left( f(x^{i,k+1})-f(x^{i,k})\right) }-1\right) \\
			\geq&\frac{1}{N}\sum_{i=1}^N\mathrm{e}^{-\beta f(x^{i,k})}(-\beta)\left( f(x^{i,k+1})-f(x^{i,k})\right)\\
			\geq&-\frac{\beta L_f\mathrm{e}^{-\beta f^*}}{N}\sum_{i=1}^N\left\|x^{i,k+1}-x^{i,k} \right\|, 
		\end{split}
	\end{equation}
where the first inequality uses the relation $a\leq\mathrm{e}^a-1,~\forall a\in\mathbb{R}$, and the last inequality follows from Assumption \ref{ass1}.
We take expectation on both sides of \eqref{xk+1} to get
\begin{equation}\label{Exk+1}
	\begin{split}
		\mathbb{E}\left[ \left\| x^{i,k+1}-x^{i,k}\right\| \right]\leq&\lambda\mathbb{E}\left[\left\| x^{i,k}-\bar{x}^{\star,k}\right\|\right] +\mathbb{E}\left[\left\|  \left( x^{i,k}-\bar{x}^{\star,k}\right)\odot\eta^k\right\| \right] +\alpha_k\mathbb{E}\left[\left\| g^{i,k}\right\| \right] \\		
		\leq&(\lambda+\delta)\sqrt{\mathbb{E}\left[\left\|x^{i,k}-\bar{x}^{\star,k} \right\|^2 \right] }+\alpha_kM_g\\
		\leq&(\lambda+\delta)\sqrt{2\prod_{n=0}^{k-1}C_{1,n}{\rm{Var}}(x_{\rm{in}})}+\alpha_kM_g,
	\end{split}
\end{equation}
where the second inequality is due to Jensen's inequality, \eqref{bound} and \eqref{Exeta}, and the last inequality follows from \eqref{var}. Next, summing up \eqref{exk+1} over $k$, and applying expectation on the resulting relation, we obtain
\begin{equation}\label{Eek0}
	\begin{split}
		&\mathbb{E}\left[\frac{1}{N}\sum_{i=1}^N\mathrm{e}^{-\beta f(x^{i,k})} \right]\\
		\geq&  \mathbb{E}\left[\frac{1}{N}\sum_{i=1}^N\mathrm{e}^{-\beta f(x^{i,0})} \right]-\frac{\beta L_f\mathrm{e}^{-\beta f^*}}{N}\sum_{i=1}^N\sum_{n=0}^{k-1}\mathbb{E}\left[\left\|x^{i,n+1}-x^{i,n} \right\|\right] \\
		\geq&\mathbb{E}\left[\mathrm{e}^{-\beta f(x_{\rm{in}})} \right] -\beta L_f\mathrm{e}^{-\beta f^*}\sum_{n=0}^{k-1}\left((\lambda+\delta)\sqrt{2\prod_{m=0}^{n-1}C_{1,m}{\rm{Var}}(x_{\rm{in}})}+\alpha_nM_g \right) ,
	\end{split}
\end{equation}
where the last inequality uses \eqref{Exk+1} and Assumption \ref{ass3}.
Now, by the conditions in this theorem and the definition of $C_{1,m}$ in \eqref{cn}, we know
\begin{equation*}
	\sum_{n=0}^{\infty}\sqrt{\prod_{m=0}^{n-1}C_{1,m}}<\infty \quad \mbox{and}\quad \sum_{n=0}^{\infty}\alpha_nM_g<\infty.
\end{equation*}
Thus, it is clear that $\sum_{n=0}^{\infty}\left((\lambda+\delta)\sqrt{2\prod_{m=0}^{n-1}C_{1,m}{\rm{Var}}(x_{\rm{in}})}+\alpha_nM_g \right)<\infty$, and we set its limit by $C_3$.
Then letting $k\rightarrow\infty$ in \eqref{Eek0}, by Theorem \ref{th-xinfty} and \eqref{condition4}, we get
\begin{equation*}
		\mathbb{E}\left[\mathrm{e}^{-\beta f(x_\infty)} \right]\geq\mathbb{E}\left[\mathrm{e}^{-\beta f(x_{\rm{in}})} \right] -\beta L_fC_3\mathrm{e}^{-\beta f^*}\geq\epsilon\mathbb{E}\left[ \mathrm{e}^{-\beta f(x_{\rm{in}})}\right].
\end{equation*}
Therefore, we have
\begin{equation*}\label{log}
	\mathrm{e}^{-\beta{\rm ess}\inf\limits_{\omega\in\Omega}f(x_\infty(\omega))}=\mathbb{E}\left[ \mathrm{e}^{-\beta{\rm ess}\inf\limits_{\omega\in\Omega}f(x_\infty(\omega))}\right] \geq\mathbb{E}\left[ \mathrm{e}^{-\beta f(x_\infty)}\right] \geq\epsilon\mathbb{E}\left[ \mathrm{e}^{-\beta f(x_{\rm{in}})}\right] .
\end{equation*}
We take the logarithm and divide by $-\beta$ to both side of the previous inequality to get
\begin{equation}\label{essinf}
		{\rm ess}\inf_{\omega\in\Omega}f(x_\infty(\omega))\leq
		-\frac{1}{\beta}\log\mathbb{E}\left[ \mathrm{e}^{-\beta f(x_{\rm{in}})}\right] -\frac{1}{\beta}\log\epsilon.
\end{equation}
So by Proposition \ref{Laplace}, if we define
\begin{equation*}\label{Ebetaa}
	E(\beta):=-\frac{1}{\beta}\log\mathbb{E}\left[ \mathrm{e}^{-\beta f(x_{\rm{in}})}\right] -f^*-\frac{1}{\beta}\log\epsilon,
\end{equation*}
then
\begin{equation*}
	\left| {\rm ess}\inf_{\omega\in\Omega}f(x_\infty(\omega))-f^*\right| \leq E(\beta),
\end{equation*}
where $\lim_{\beta\rightarrow\infty}E(\beta)=0$.
\end{proof}

\subsection{Convergence rate under stronger condition}\label{sec5.2}
In subsection \ref{sec5.1}, we show that the error function $E(\beta)$ converges to zero. In this subsection, we focus on the convergence rate of error estimate to zero as $\beta\to\infty$ under the case that $f$ in \eqref{problem} is twice continuously differentiable. 
Frist, we list the standard setting for the error analysis in this part.
\begin{assumption}\label{ass4}
	$f$ in \eqref{problem} is twice continuously differentiable and locally strictly convex at $x^*$, that is, $\nabla^2f(x^*)$ is positive definite, where $x^*$ is the unique global minimizer of \eqref{problem}. 
\end{assumption}
\begin{assumption}\label{ass5}
	The probability density function $\phi$ of random vector $x_{\rm{in}}$  defined as in Assumption \ref{ass3}, has a compact support set, is continuous at $x^*$ and satisfies $\phi(x^*)>0$.
\end{assumption}
\begin{lemma}{\cite{Ha2021}}\label{le-rate}
	Under Assumptions \ref{ass0} and \ref{ass3}-\ref{ass5}, we have
	\begin{equation*}
		-\frac{1}{\beta}\log\mathbb{E}\left[ \mathrm{e}^{-\beta f(x_{\rm{in}})}\right]=f^*+\frac{d\log\beta}{2\beta}+\mathcal{O}\left(\frac{1}{\beta}\right), \quad \beta\to\infty, 
	\end{equation*}
where $x_{\rm{in}}$ is given in Assumption \ref{ass3}.
\end{lemma}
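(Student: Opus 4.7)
The plan is to apply Laplace's method to the integral
\[
\mathbb{E}\!\left[\mathrm{e}^{-\beta f(x_{\mathrm{in}})}\right] = \int_{\mathbb{R}^d} \mathrm{e}^{-\beta f(x)}\,\phi(x)\,\mathrm{d}x.
\]
The first step is to localize. By Assumption \ref{ass5} the density $\phi$ has compact support, and by Assumption \ref{ass0} together with the continuity of $f$, there exist $r>0$ and $\kappa>0$ so that $f(x)\geq f^*+\kappa$ whenever $x\in\mathrm{supp}(\phi)$ and $\|x-x^*\|\geq r$. Hence the contribution from the exterior of $B_r(x^*)$ is bounded by $\mathrm{e}^{-\beta(f^*+\kappa)}\|\phi\|_1$, which is exponentially smaller than the contribution to be isolated near $x^*$ and can safely be absorbed into the $\mathcal{O}(1/\beta)$ remainder.

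For the inner integral I would invoke Assumption \ref{ass4}. Since $x^*$ is the unique minimizer, $\nabla f(x^*)=0$, and a Taylor expansion gives
\[
f(x)-f^* = \tfrac{1}{2}(x-x^*)^\top H (x-x^*) + R(x-x^*),\qquad |R(z)|\leq C\|z\|^3,
\]
with $H := \nabla^2 f(x^*)\succ 0$. The rescaling $y=\sqrt{\beta}(x-x^*)$ then recasts the inner integral as
\[
\mathrm{e}^{-\beta f^*}\beta^{-d/2}\int_{\|y\|\leq r\sqrt{\beta}}\mathrm{e}^{-\frac{1}{2}y^\top H y}\,\mathrm{e}^{-\beta R(y/\sqrt{\beta})}\,\phi\!\left(x^* + y/\sqrt{\beta}\right)\mathrm{d}y,
\]
which already exhibits the decisive $\beta^{-d/2}$ prefactor responsible for the $\tfrac{d\log\beta}{2\beta}$ term in the conclusion.

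The third step is to control the rescaled integrand. Because $\beta R(y/\sqrt{\beta}) = \mathcal{O}(\|y\|^3/\sqrt{\beta})$, the factor $\mathrm{e}^{-\beta R(y/\sqrt{\beta})}$ tends to $1$ uniformly on any region $\|y\|\leq\beta^{1/6}$, while the complementary region is Gaussian suppressed by $\mathrm{e}^{-\frac{1}{2}y^\top H y}$. Combining this with $\phi(x^*+y/\sqrt{\beta})\to\phi(x^*)>0$ from the continuity in Assumption \ref{ass5} and evaluating the Gaussian integral by the standard formula, dominated convergence gives
\[
\mathbb{E}\!\left[\mathrm{e}^{-\beta f(x_{\mathrm{in}})}\right] = \mathrm{e}^{-\beta f^*}\beta^{-d/2}\!\left(\frac{(2\pi)^{d/2}\phi(x^*)}{\sqrt{\det H}} + o(1)\right).
\]
Taking $-\tfrac{1}{\beta}\log(\cdot)$ on both sides produces exactly $f^* + \tfrac{d\log\beta}{2\beta} + \mathcal{O}(1/\beta)$, because the logarithm of a positive constant plus a bounded perturbation, once divided by $\beta$, is $\mathcal{O}(1/\beta)$.

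The main obstacle is obtaining an error strong enough for $\mathcal{O}(1/\beta)$ rather than only $o((\log\beta)/\beta)$ after the log-divide step; it suffices that the $o(1)$ in the previous display is merely $\mathcal{O}(1)$, which follows from the local boundedness of $\phi$ near $x^*$ and a one-step Taylor bound on the cubic remainder in the exponent, so no odd-moment cancellation argument is needed. Assumption \ref{ass5} is what sidesteps delicate tail estimates outside $\mathrm{supp}(\phi)$, while the positive definiteness in Assumption \ref{ass4} is what guarantees that the Gaussian integral $\int\mathrm{e}^{-\frac{1}{2}y^\top Hy}\,\mathrm{d}y$ produces a finite, strictly positive leading constant.
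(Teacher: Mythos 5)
The paper does not prove this lemma itself; it quotes it from \cite{Ha2021}, and the argument behind that citation is exactly the Laplace-method asymptotics you carry out (localize on the compact support, quadratic expansion at $x^*$, rescale by $\sqrt{\beta}$ to extract the $\beta^{-d/2}$ factor, then take $-\tfrac{1}{\beta}\log$), so your route is the standard one and the structure of your proof is sound.

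Two points deserve correction, though neither breaks the argument. First, your remainder bound $|R(z)|\leq C\|z\|^3$ is not available: Assumption \ref{ass4} only gives $f\in C^2$ with $\nabla^2 f(x^*)\succ0$, not a Lipschitz Hessian or $C^3$ regularity. The correct statement is $R(z)=\tfrac{1}{2}z^\top\bigl(\nabla^2 f(\xi)-H\bigr)z$, so $|R(z)|\leq\tfrac{1}{2}\omega(\|z\|)\|z\|^2$ with $\omega$ the local modulus of continuity of $\nabla^2 f$. This still suffices: for fixed $y$ one has $\beta R(y/\sqrt{\beta})\to0$ pointwise, and choosing the localization radius $r$ so small that $\omega(r)\leq\tfrac{1}{2}\lambda_{\min}(H)$ gives the integrable dominating function $\mathrm{e}^{-\frac{1}{4}\lambda_{\min}(H)\|y\|^2}$ (up to the bound on $\phi$ near $x^*$), so dominated convergence delivers the same limit $\frac{(2\pi)^{d/2}\phi(x^*)}{\sqrt{\det H}}$; your uniform-on-$\{\|y\|\leq\beta^{1/6}\}$ estimate, which relied on the cubic bound, is not needed. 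Second, your closing remark that it ``suffices that the $o(1)$ is merely $\mathcal{O}(1)$'' is too weak: a bounded perturbation could drive the bracket toward $0$ (e.g.\ like $1/\log\beta$), and then $-\tfrac{1}{\beta}\log(\cdot)$ would contribute $\tfrac{\log\log\beta}{\beta}$, which is not $\mathcal{O}(1/\beta)$. What you actually need, and what your dominated-convergence limit does provide, is that the bracket stays bounded above and bounded away from zero for large $\beta$ (using $\phi(x^*)>0$), after which $-\tfrac{1}{\beta}\log$ of it is genuinely $\mathcal{O}(1/\beta)$ and the exponentially small exterior contribution is absorbed as you say.
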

In what follows, based on Theorem \ref{th-err} and Lemma \ref{le-rate}, we provide the following error estimate.
\begin{theorem}\label{th-rate}
	Under the same assumptions as in Theorem \ref{th-err} together with Assumptions \ref{ass4} and \ref{ass5}, given the point $x_\infty$ defined in Theorem \ref{th-xinfty}, one has
	\begin{equation*}
		\left| {\rm ess}\inf_{\omega\in\Omega}f(x_\infty(\omega))-f^*\right| \leq \frac{d\log\beta}{2\beta}+\mathcal{O}\left(\frac{1}{\beta}\right), \quad \beta\to\infty.
	\end{equation*}
\end{theorem}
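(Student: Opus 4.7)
The plan is essentially to combine the two ingredients already in place: the nonasymptotic bound \eqref{essinf} established during the proof of Theorem \ref{th-err}, and the Laplace-type asymptotic expansion supplied by Lemma \ref{le-rate}. Since all of the hypotheses of Theorem \ref{th-err} are assumed here, inequality \eqref{essinf} holds for the given $\epsilon \in (0,1)$, namely
\begin{equation*}
{\rm ess}\inf_{\omega\in\Omega} f(x_\infty(\omega)) \;\le\; -\frac{1}{\beta}\log\mathbb{E}\!\left[\mathrm{e}^{-\beta f(x_{\rm in})}\right] \;-\; \frac{1}{\beta}\log\epsilon.
\end{equation*}

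First I would observe the trivial lower bound ${\rm ess}\inf_{\omega\in\Omega} f(x_\infty(\omega)) \ge f^*$, which follows immediately from the fact that $f^*$ is the global minimum of $f$ on $\mathbb{R}^d$ and hence $f(x_\infty(\omega)) \ge f^*$ for every $\omega$. This lets me drop the absolute value in the conclusion and reduces the task to upper bounding ${\rm ess}\inf f(x_\infty) - f^*$.

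Next, because Assumptions \ref{ass0}, \ref{ass3}, \ref{ass4} and \ref{ass5} are all in force, Lemma \ref{le-rate} applies and yields the expansion
\begin{equation*}
-\frac{1}{\beta}\log\mathbb{E}\!\left[\mathrm{e}^{-\beta f(x_{\rm in})}\right] \;=\; f^* + \frac{d\log\beta}{2\beta} + \mathcal{O}\!\left(\frac{1}{\beta}\right), \qquad \beta \to \infty.
\end{equation*}
I would substitute this into the right-hand side of \eqref{essinf}, then subtract $f^*$ from both sides. The remaining residue is
\begin{equation*}
\frac{d\log\beta}{2\beta} + \mathcal{O}\!\left(\frac{1}{\beta}\right) - \frac{1}{\beta}\log\epsilon,
\end{equation*}
and since $\epsilon$ is a fixed constant in $(0,1)$, the term $-(\log\epsilon)/\beta$ is itself $\mathcal{O}(1/\beta)$ and gets absorbed into the existing $\mathcal{O}(1/\beta)$ remainder, producing exactly the claimed rate.

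There is not really a hard step here; the work was done in Theorem \ref{th-err} (which required carefully propagating the consensus and gradient-estimate bounds to control $\mathbb{E}\|x^{i,k+1}-x^{i,k}\|$ and sum them over $k$) and in Lemma \ref{le-rate} (which is quoted from \cite{Ha2021} and is a standard Laplace-method computation around the non-degenerate minimizer $x^*$, using $\nabla^2 f(x^*) \succ 0$, continuity of $\phi$ at $x^*$ and $\phi(x^*) > 0$). The only mild care point is verifying that the hypothesis \eqref{condition4} of Theorem \ref{th-err}, which involves $\beta$ explicitly on both sides, is compatible with sending $\beta \to \infty$: since $C_3$ depends on the parameters but not on $\beta$, and $\mathbb{E}[\mathrm{e}^{-\beta f(x_{\rm in})}] \ge \mathrm{e}^{-\beta M}$ for any upper bound $M$ on $f$ on the support of $\phi$, one should briefly remark that under Assumption \ref{ass5} the compactness of $\operatorname{supp}(\phi)$ and continuity of $f$ make \eqref{condition4} sustainable for all sufficiently large $\beta$ after appropriate scaling of the parameters, so the combined estimate is indeed meaningful in the regime $\beta \to \infty$.
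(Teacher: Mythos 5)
Your proposal is correct and follows essentially the same route as the paper: it plugs the expansion of Lemma \ref{le-rate} into the bound \eqref{essinf} from the proof of Theorem \ref{th-err} and absorbs the $-\frac{1}{\beta}\log\epsilon$ term into the $\mathcal{O}\left(\frac{1}{\beta}\right)$ remainder. Your added remarks (the trivial lower bound $f(x_\infty)\geq f^*$ to handle the absolute value, and the compatibility of \eqref{condition4} with large $\beta$) are sensible refinements that the paper leaves implicit.
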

\begin{proof}
	By \eqref{essinf} and Lemma \ref{le-rate}, we have
	\begin{equation*}
		\begin{split}
			{\rm ess}\inf_{\omega\in\Omega}f(x_\infty(\omega))\leq&
			f^*+\frac{d\log\beta}{2\beta}+\mathcal{O}\left(\frac{1}{\beta}\right) -\frac{1}{\beta}\log\epsilon\\
			=&
			f^*+\frac{d\log\beta}{2\beta}+\mathcal{O}\left(\frac{1}{\beta}\right), \quad \beta\to\infty.
		\end{split}
	\end{equation*}
	which gives the conclusion.
\end{proof}
From Theorem \ref{th-rate}, we know that the convergence rate of the error $\left| {\rm ess}\inf_{\omega\in\Omega}f(x_\infty(\omega))-f^*\right|$ is $\mathcal{O}\left(\frac{\log\beta}{\beta} \right)$.

\section{Iterate convergence in expectation}\label{sec6}
In Section \ref{sec5}, we provide the error estimation between the objective function value at the consensus point and the global minimum in the almost sure sense. Here, we focus on the convergence of iterates generated by the ESCBO algorithm in expectation under some additional assumptions on $f$ in \eqref{problem}.
Inspired by \cite{Fornasier2022}, we define the mean square distance between iterate $x^{i,k}$ of the ESCBO algorithm and the unique minimizer $x^*$, given by
\begin{equation}\label{Wk}
	W_k=\frac{1}{N}\sum_{i=1}^N\left\|x^{i,k}-x^* \right\|^2 ,
\end{equation}
and provide an upper bound to the number of iterations that the algorithm takes until $\mathbb{E}\left[ W_k\right] \leq\varepsilon $, where $\varepsilon$ is a given positive number.

\begin{assumption}\label{ass6}
	For the unique minimizer $x^*$ of \eqref{problem}, there exist positive constants $f_\infty$, $R_0$, $\nu$ and $\mu$ such that
	\begin{align}
		\mu\left\|x-x^*\right\|&\leq\left(f(x)-f^*\right)^\nu, \quad \forall x\in\mathcal{B}_{R_0}\left(x^*\right), \label{inverse}\\
		f_\infty&<f(x)-f^*,\quad\forall x\notin\mathcal{B}_{R_0}\left(x^*\right),\label{finfty}
	\end{align}
where $\mathcal{B}_{R_0}\left(x^*\right):=\left\{x\in\mathbb{R}^d:\left\|x-x^*\right\|\leq R_0\right\}$ denotes the ball of radius $R_0$ centered at $x^*$.
\end{assumption}
	There exist some functions satisfying Assumption \ref{ass6}. For example, we consider the function $f:\mathbb{R}\to\mathbb{R}$ defined by
	\begin{equation*}
		f(x)= x^2-10\cos(2\pi x)+10.
	\end{equation*}
	Such objective function is nonconvex and satisfies Assumption \ref{ass6} with $f_\infty=1$, $R_0=1$, $\nu=\frac{1}{2}$ and $\mu=1$.
\begin{remark}
	Assumption \ref{ass6} is often used for the analysis of CBO algorithms \cite{Fornasier2021,Fornasier2022,Fornasier2024,Carrillo2024,Riedl2024}. Condition \eqref{inverse} guarantees that $f$ is locally coercive in a neighborhood of $x^*$ and gives a lower bound on the local growth of $f$. Such property is proposed in \cite{Fornasier2021}, namely inverse continuity condition, and also known as a quadratic growth condition with $\nu=\frac{1}{2}$ in \cite{Necoara2019}. Condition \eqref{finfty} avoids the case that $f(x)$ is close to $f^*$ for some $x$ far from $x^*$. It is noteworthy that Assumption \ref{ass6} yields the uniqueness of the global minimizer $x^*$ requested in Assumption \ref{ass0}.
\end{remark}

 In the following, $\mathcal{B}_{R_0}\left(x^*\right)$ is further abbreviated to $\mathcal{B}_{R_0}$, and for any $r>0$, we define
 \begin{equation}\label{fr}
 	f_r:=\max_{x\in\mathcal{B}_r}f(x)\quad \mbox{and}\quad \mathcal{I}_r^k:=\left\{i\in[N]:x^{i,k}\in\mathcal{B}_r\right\}.
 \end{equation}
  One important estimation for the following analysis is shown as follows, which is similar to the quantitative Laplace principle in \cite{Fornasier2022}.
 \begin{lemma}\label{le-barx}
 	  Suppose Assumption \ref{ass6} holds with the defined parameters $\{R_0, f_\infty, \mu, \nu\}$, and let $r\in(0, R_0]$ and $q>0$ satisfy $q+f_r-f^*\leq f_\infty$. Then, for any $k\geq 0$, if the indicator set $\mathcal{I}_r^k$ is nonempty, we have
 	\begin{equation*}
 		\left\|\bar{x}^{\star,k}-x^*\right\|\leq\frac{(q+f_r-f^*)^\nu}{\mu}+\frac{\mathrm{e}^{-\beta q}}{\left|\mathcal{I}_r^k\right| }\sum_{i=1}^N\left\|x^{i,k}-x^*\right\|.
 	\end{equation*}
 \end{lemma}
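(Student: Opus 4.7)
The plan is to prove this via the classical quantitative Laplace-type splitting argument. Starting from the definition of $\bar{x}^{\star,k}$ in \eqref{weightedstate}, I write
\begin{equation*}
\bar{x}^{\star,k}-x^* = \frac{\sum_{i=1}^N(x^{i,k}-x^*)\mathrm{e}^{-\beta f(x^{i,k})}}{\sum_{i=1}^N\mathrm{e}^{-\beta f(x^{i,k})}},
\end{equation*}
and apply the triangle inequality to move the norm inside the weighted sum. The task then reduces to bounding the weighted average of $\|x^{i,k}-x^*\|$ with weights $\mathrm{e}^{-\beta f(x^{i,k})}/\sum_{j}\mathrm{e}^{-\beta f(x^{j,k})}$.

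Next I would partition the index set $[N]$ according to the sublevel criterion
\begin{equation*}
\Omega^k := \bigl\{i\in[N]: f(x^{i,k})-f^*\leq q+f_r-f^*\bigr\},
\end{equation*}
and split the weighted sum into contributions from $\Omega^k$ and from its complement $[N]\setminus\Omega^k$. For $i\in\Omega^k$, the hypothesis $q+f_r-f^*\leq f_\infty$ together with the contrapositive of \eqref{finfty} forces $x^{i,k}\in\mathcal{B}_{R_0}$, so the inverse continuity condition \eqref{inverse} yields
\begin{equation*}
\|x^{i,k}-x^*\|\leq \frac{(f(x^{i,k})-f^*)^\nu}{\mu}\leq \frac{(q+f_r-f^*)^\nu}{\mu}.
\end{equation*}
Factoring this uniform bound out of the $\Omega^k$-portion and noting that its weight-ratio is bounded by $1$ produces the first term of the claimed inequality.

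For the remaining indices $i\notin\Omega^k$, one has $f(x^{i,k})>q+f_r$, hence $\mathrm{e}^{-\beta f(x^{i,k})}\leq \mathrm{e}^{-\beta q}\mathrm{e}^{-\beta f_r}$ in the numerator. To bound the denominator from below I exploit the hypothesis that $\mathcal{I}_r^k$ is nonempty: for every $i\in\mathcal{I}_r^k$ the definition in \eqref{fr} gives $f(x^{i,k})\leq f_r$, so
\begin{equation*}
\sum_{i=1}^N\mathrm{e}^{-\beta f(x^{i,k})}\;\geq\;\sum_{i\in\mathcal{I}_r^k}\mathrm{e}^{-\beta f(x^{i,k})}\;\geq\;|\mathcal{I}_r^k|\,\mathrm{e}^{-\beta f_r}.
\end{equation*}
The $\mathrm{e}^{-\beta f_r}$ factors cancel, leaving the second term of the desired bound (after enlarging the sum from $i\notin\Omega^k$ back to all $i\in[N]$).

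I expect no deep obstacle here: the argument is essentially a careful bookkeeping one. The subtle point to get right is the logical chain that justifies the uniform bound on $\|x^{i,k}-x^*\|$ over $\Omega^k$, namely recognizing that the constraint $q+f_r-f^*\leq f_\infty$ is precisely what allows \eqref{finfty} to force all $\Omega^k$-indices into the ball $\mathcal{B}_{R_0}$ where \eqref{inverse} is applicable; without this the inverse continuity estimate could not be invoked. Everything else is standard manipulation of Gibbs-type weights.
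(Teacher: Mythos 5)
Your proof is correct, and it reaches the stated bound by a slightly different decomposition than the paper. The paper splits the particles spatially: it sets $\tilde{r}=\frac{(q+f_r-f^*)^\nu}{\mu}$ and partitions $[N]$ into $\mathcal{I}_{\tilde{r}}^k$ and its complement, so the first term comes for free from $\left\|x^{i,k}-x^*\right\|\leq\tilde{r}$, while the work is concentrated in the second term, where a case analysis ($\tilde{r}\leq R_0$ versus $\tilde{r}>R_0$, using both \eqref{inverse} and \eqref{finfty}) is needed to show $\inf_{x^{i,k}\notin\mathcal{B}_{\tilde{r}}}f(x^{i,k})-f_r\geq q$. You instead split by function values, via the sublevel set $\Omega^k=\{i: f(x^{i,k})-f^*\leq q+f_r-f^*\}$: there the constraint $q+f_r-f^*\leq f_\infty$ and the contrapositive of \eqref{finfty} place these particles in $\mathcal{B}_{R_0}$, so \eqref{inverse} gives the uniform bound $\frac{(q+f_r-f^*)^\nu}{\mu}$ directly, and the complementary indices satisfy $\mathrm{e}^{-\beta f(x^{i,k})}\leq\mathrm{e}^{-\beta q}\mathrm{e}^{-\beta f_r}$ trivially by definition of $\Omega^k$. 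Both arguments share the same denominator estimate $\sum_{i}\mathrm{e}^{-\beta f(x^{i,k})}\geq\left|\mathcal{I}_r^k\right|\mathrm{e}^{-\beta f_r}$ from the nonemptiness of $\mathcal{I}_r^k$, and the two splits are essentially contrapositive reformulations of one another; your version buys a cleaner argument (no case analysis on $\tilde{r}$ versus $R_0$, and no need for the auxiliary fact $\tilde{r}\geq r$), while the paper's version isolates the exponential smallness as a statement about the infimum of $f$ outside a ball, which is closer in spirit to the quantitative Laplace principle of Fornasier et al. that it cites. No gap in your reasoning.
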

\begin{proof}
	Let us first choose $\tilde{r}=\frac{(q+f_r-f^*)^\nu}{\mu}$. By the assumption $q+f_r-f^*\leq f_\infty$ in the statement, this choice satisfies
	\begin{equation}\label{finftygeq}
		f_\infty\geq\left( \mu\tilde{r}\right)^{1/\nu}
	\end{equation} 
	and furthermore $\tilde{r}\geq r$. Indeed, by Assumption \ref{ass6} with $r\in(0,R_0]$, it implies
	\begin{equation*}
		\tilde{r}\geq \frac{1}{\mu}\left(f_r-f^* \right)^\nu=\max_{x\in\mathcal{B}_r}\frac{1}{\mu}\left( f(x)-f^*\right) ^\nu\geq\max_{x\in\mathcal{B}_r}\left\| x-x^*\right\|=r. 
	\end{equation*}
	 For any $k\geq0$, using the definition of  $\bar{x}^{\star,k}$ in \eqref{weightedstate}, we obtain
	\begin{equation}\label{barx1}
		\begin{split}
			\left\|\bar{x}^{\star,k}-x^*\right\|\leq&\frac{\sum_{i=1}^N\mathrm{e}^{-\beta f(x^{i,k})}\left\|x^{i,k}-x^*\right\|}{\sum_{i=1}^N\mathrm{e}^{-\beta f(x^{i,k})}}\\
			=&\frac{\sum_{i\in\mathcal{I}_{\tilde{r}}^k}\mathrm{e}^{-\beta f(x^{i,k})}\left\|x^{i,k}-x^*\right\|+\sum_{i\notin\mathcal{I}_{\tilde{r}}^k}\mathrm{e}^{-\beta f(x^{i,k})}\left\|x^{i,k}-x^*\right\|}{\sum_{i=1}^N\mathrm{e}^{-\beta f(x^{i,k})}}\\
			\leq&\tilde{r}+\frac{\exp\left( -\beta\inf_{x^{i,k}\notin\mathcal{B}_{\tilde{r}}}f(x^{i,k})\right)\sum_{i\notin\mathcal{I}_{\tilde{r}}^k}\left\|x^{i,k}-x^*\right\|}{\sum_{i=1}^N\mathrm{e}^{-\beta f(x^{i,k})}},
		\end{split}
	\end{equation}
where the last inequality is due to $\left\|x^{i,k}-x^*\right\|\leq\tilde{r}$ for all $i\in\mathcal{I}_{\tilde{r}}^k$. In addition, note that the indicator set $\mathcal{I}_r^k$ is nonempty, then we have
\begin{equation*}
	\sum_{i=1}^N\mathrm{e}^{-\beta f(x^{i,k})}\geq\sum_{i\in\mathcal{I}_r^k}\mathrm{e}^{-\beta f(x^{i,k})}\geq \left|\mathcal{I}_r^k\right|\mathrm{e}^{-\beta f_r}.
\end{equation*}
Substituting this into \eqref{barx1}, we have
\begin{equation}\label{barx2}
	\left\|\bar{x}^{\star,k}-x^*\right\|\leq\tilde{r}+\frac{\exp\left( -\beta\left( \inf_{x^{i,k}\notin\mathcal{B}_{\tilde{r}}}f(x^{i,k})-f_r\right) \right)\sum_{i\notin\mathcal{I}_{\tilde{r}}^k}\left\|x^{i,k}-x^*\right\|}{\left|\mathcal{I}_r^k\right|}.
\end{equation}
Using Assumption \ref{ass6} again, if $\tilde{r}>R_0$, we thus have
\begin{equation*}
	\inf_{x^{i,k}\notin\mathcal{B}_{\tilde{r}}}f(x^{i,k})\geq \inf_{x^{i,k}\notin\mathcal{B}_{R_0}}f(x^{i,k})> f^*+f_\infty;
\end{equation*}
 if $\tilde{r}\leq R_0$, we have 
 \begin{equation*}
 	\left( \inf_{x^{i,k}\notin\mathcal{B}_{\tilde{r}}}f(x^{i,k})-f^*\right) ^\nu\geq \mu \tilde{r}\quad\mbox{or}\quad\inf_{x^{i,k}\notin\mathcal{B}_{\tilde{r}}}f(x^{i,k})> f^*+f_\infty.
 \end{equation*}
Hence, considering \eqref{finftygeq} and the definition of $\tilde{r}$, it holds

\begin{equation*}
	\inf_{x^{i,k}\notin\mathcal{B}_{\tilde{r}}}f(x^{i,k})-f_r\geq\min\left\lbrace f^*+f_\infty, f^*+\left( \mu\tilde{r}\right)^{1/\nu} \right\rbrace-f_r\geq f^*+\left( \mu\tilde{r}\right)^{1/\nu}-f_r =q.
\end{equation*}
Inserting this and the definition of $\tilde{r}$ into \eqref{barx2}, we obtain the final result.
\end{proof}
The following lemma provides an upper bound for the expected mean square distance $\mathbb{E}\left[ W_k\right] $.
\begin{lemma}\label{le-EW}
	Under Assumption \ref{ass1}, for all $i\in[N]$ and $k\geq 0$, the expected mean square distance satisfies
	\begin{equation*}
		\begin{split}
			\mathbb{E}\left[ W_{k+1}\right]\leq&\left( 1+2\lambda^2+2\delta^2-2\lambda\right) \mathbb{E}\left[ W_k\right] +\left(2
			\lambda^2+2\delta^2 \right)\mathbb{E}\left[ \left\| \bar{x}^{\star,k}-x^*\right\|^2 \right]\\
			&+\left(4\lambda^2+4\delta^2+2\sqrt{\lambda^2+\delta^2} \right)\sqrt{\mathbb{E}\left[W_k \right]\mathbb{E} \left[ \left\| \bar{x}^{\star,k}-x^*\right\|^2 \right]}\\
			&+2\alpha_kM_g\sqrt{\mathbb{E}\left[W_k \right]}+2\alpha_k^2M_g^2.
		\end{split}
	\end{equation*}
\end{lemma}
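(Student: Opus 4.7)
The strategy is to decompose $x^{i,k+1}-x^*=(y^{i,k+1}-x^*)-\alpha_k g^{i,k}$ and control the two contributions separately. Expanding $\|x^{i,k+1}-x^*\|^2$, applying Cauchy-Schwarz to the cross product, and using the bound $\|g^{i,k}\|\leq M_g$ from Lemma~\ref{le-fd} gives
\begin{equation*}
\|x^{i,k+1}-x^*\|^2 \;\leq\; \|y^{i,k+1}-x^*\|^2 \,+\, 2\alpha_k M_g\,\|y^{i,k+1}-x^*\| \,+\, \alpha_k^2 M_g^2,
\end{equation*}
which cleanly isolates the stochastic CBO step from the approximated-gradient perturbation and reduces the lemma to estimating $\mathbb{E}[\|y^{i,k+1}-x^*\|^2]$ and $\mathbb{E}[\|y^{i,k+1}-x^*\|]$.

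For both quantities I would exploit the splitting $y^{i,k+1}-x^*=u_i\odot((1-\lambda)\mathbf{1}-\eta^k)+v\odot(\lambda\mathbf{1}+\eta^k)$, writing $u_i:=x^{i,k}-x^*$ and $v:=\bar{x}^{\star,k}-x^*$. Since the components of $\eta^k$ are i.i.d.\ centered with variance $\delta^2$ and independent of $\mathcal{F}_{k-1}$, a componentwise computation yields the conditional second moments $((1-\lambda)^2+\delta^2)\|u_i\|^2$ and $(\lambda^2+\delta^2)\|v\|^2$ for the two pieces. Combining these with $\|a+b\|^2\leq\|a\|^2+2\|a\|\|b\|+\|b\|^2$ and the conditional Cauchy-Schwarz inequality produces a bound on $\mathbb{E}[\|y^{i,k+1}-x^*\|^2\,|\,\mathcal{F}_{k-1}]$ whose cross term carries the coefficient $2\sqrt{((1-\lambda)^2+\delta^2)(\lambda^2+\delta^2)}\,\|u_i\|\|v\|$. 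In parallel, the triangle inequality combined with Jensen's inequality applied to each summand of the splitting gives $\mathbb{E}[\|y^{i,k+1}-x^*\|\,|\,\mathcal{F}_{k-1}]\leq\sqrt{(1-\lambda)^2+\delta^2}\,\|u_i\|+\sqrt{\lambda^2+\delta^2}\,\|v\|$.

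Taking full expectation, applying Cauchy-Schwarz in the forms $\mathbb{E}[\|u_i\|\|v\|]\leq\sqrt{\mathbb{E}[\|u_i\|^2]\mathbb{E}[\|v\|^2]}$ and $\frac{1}{N}\sum_{i=1}^N\sqrt{\mathbb{E}[\|u_i\|^2]}\leq\sqrt{\mathbb{E}[W_k]}$, and then using $(1-\lambda)^2+\delta^2\leq 1$ (which is implied by the condition $(1-\lambda)^2+\delta^2<1/2$ of Assumption~\ref{ass2}) to tidy constants, one arrives at an inequality of almost the claimed form, except for a leftover term $2\alpha_kM_g\sqrt{\lambda^2+\delta^2}\sqrt{\mathbb{E}[\|\bar{x}^{\star,k}-x^*\|^2]}$. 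This residual is absorbed via AM-GM: $2AB\leq A^2+B^2$ with $A=\alpha_kM_g$ and $B=\sqrt{(\lambda^2+\delta^2)\mathbb{E}[\|\bar{x}^{\star,k}-x^*\|^2]}$, which exactly doubles the coefficients of $\alpha_k^2 M_g^2$ and $\mathbb{E}[\|\bar{x}^{\star,k}-x^*\|^2]$ to their target values.

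The main technical care is required in juggling conditional and unconditional expectations: the conditional step is what averages out the $\eta^k$-dependent increments (producing the clean variance factor $\delta^2$), whereas the outer step combined with Cauchy-Schwarz generates the symmetric quantity $\sqrt{\mathbb{E}[W_k]\,\mathbb{E}[\|\bar{x}^{\star,k}-x^*\|^2]}$. The calculation outlined above in fact produces a slightly tighter bound than the stated one: the extra $4(\lambda^2+\delta^2)$ in the claimed cross-term coefficient, together with the gap $\lambda^2+\delta^2$ between $(1-\lambda)^2+\delta^2$ and the stated $1-2\lambda+2(\lambda^2+\delta^2)$ in the $\mathbb{E}[W_k]$ coefficient, are harmless slack in the stated upper bound and offer a cleaner expression for the iteration-complexity argument that follows.
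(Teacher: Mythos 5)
Your decomposition is a genuinely different route from the paper's: the paper expands $W_{k+1}=W_k+T_1^k+T_2^k$ with $T_1^k=\frac{1}{N}\sum_i\|x^{i,k+1}-x^{i,k}\|^2$ and $T_2^k=\frac{2}{N}\sum_i(x^{i,k}-x^*)^\top(x^{i,k+1}-x^{i,k})$, substitutes the full increment $-(\lambda I_d+H^k)(x^{i,k}-\bar{x}^{\star,k})-\alpha_k g^{i,k}$, and only then splits $x^{i,k}-\bar{x}^{\star,k}$ through $x^*$, whereas you first peel off the gradient step around $y^{i,k+1}$ and then compute conditional moments of the splitting $u_i\odot((1-\lambda)\mathbf{1}-\eta^k)+v\odot(\lambda\mathbf{1}+\eta^k)$. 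Your algebra is correct (the splitting identity, the conditional second moments $((1-\lambda)^2+\delta^2)\|u_i\|^2$ and $(\lambda^2+\delta^2)\|v\|^2$, the Cauchy--Schwarz and AM--GM absorptions), and it indeed yields smaller coefficients on $\mathbb{E}[W_k]$ and on the cross term in the regime you work in.

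The genuine issue is a hypothesis mismatch. The lemma is stated under Assumption \ref{ass1} alone, with no restriction on $\lambda$ and $\delta$, but your final tidying uses $(1-\lambda)^2+\delta^2\le 1$, which you justify by Assumption \ref{ass2} --- an assumption the lemma does not make (and which fails, e.g., for $\delta\ge 1$ or $\lambda\ge 2$). The place where this really bites is the gradient term: pairing $-\alpha_k g^{i,k}$ with $y^{i,k+1}-x^*$ produces $2\alpha_k M_g\sqrt{(1-\lambda)^2+\delta^2}\,\sqrt{\mathbb{E}[W_k]}$, and the factor $\sqrt{(1-\lambda)^2+\delta^2}$ cannot be removed within your decomposition; the paper avoids it by pairing the gradient with $x^{i,k}-x^*$ inside $T_2^k$, which gives $2\alpha_k M_g\sqrt{\mathbb{E}[W_k]}$ with coefficient exactly one for all admissible parameters. (Your cross-term comparison, by contrast, survives without the extra condition, since $\sqrt{(1-\lambda)^2+\delta^2}\le 1+\lambda+\delta\le 1+2\sqrt{\lambda^2+\delta^2}$, so $2\sqrt{((1-\lambda)^2+\delta^2)(\lambda^2+\delta^2)}\le 2\sqrt{\lambda^2+\delta^2}+4(\lambda^2+\delta^2)$.) So as written your argument proves the lemma only under the additional parameter restriction; this is harmless where the lemma is actually invoked, because the complexity theorem assumes $2\lambda-2\lambda^2-2\delta^2>0$, which forces $(1-\lambda)^2+\delta^2<1$, but to prove the lemma as stated you should either switch the gradient pairing as the paper does or restate the hypothesis.
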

\begin{proof}
	By the definition of $W_k$ in \eqref{Wk}, we have
	\begin{equation}\label{Wk+1}
		\begin{split}
			W_{k+1}=&\frac{1}{N}\sum_{i=1}^N\left\|x^{i,k+1}-x^* \right\|^2 =\frac{1}{N}\sum_{i=1}^N\left\|x^{i,k+1}-x^{i,k}+x^{i,k}-x^* \right\|^2\\
			=& W_{k}+\underbrace{\frac{1}{N}\sum_{i=1}^N\left\|x^{i,k+1}-x^{i,k} \right\|^2}_{=:T^k_1}+\underbrace{\frac{2}{N}\sum_{i=1}^N\left( x^{i,k}-x^*\right)^\top\left(x^{i,k+1}-x^{i,k} \right)}_{=:T^k_2}.
		\end{split}
	\end{equation}
Let $H^k\in\mathbb{R}^{d\times d}$ be the diagonal matrix such that $H^k= {\rm{diag}}\left( \eta^k_1,\ldots,\eta^k_d\right)$. Due to \eqref{updatestate1} and \eqref{updatestate2}, it holds
\begin{equation}\label{ET1}
	\begin{split}
		\mathbb{E}\left[ T^k_1\right]=&\frac{1}{N}\sum_{i=1}^N\mathbb{E}\left[\left\| \left( \lambda I_d+H^k\right) \left(x^{i,k}-\bar{x}^{\star,k} \right)+\alpha_kg^{i,k} \right\|^2 \right] \\
		\leq&\frac{2}{N}\sum_{i=1}^N\mathbb{E}\left[\left\| \left( \lambda I_d+H^k\right) \left(x^{i,k}-\bar{x}^{\star,k} \right)\right\| ^2\right]+2\alpha_k^2M_g^2 \\
		\leq&\frac{2}{N}\sum_{i=1}^N\mathbb{E}\underbrace{\left[\left\| \left( \lambda I_d+H^k\right) \left(x^{i,k}-x^* \right)\right\| ^2 \right]}_{=: Z_1^k}+\frac{2}{N}\sum_{i=1}^N\mathbb{E}\underbrace{\left[\left\| \left( \lambda I_d+H^k\right) \left(\bar{x}^{\star,k}-x^* \right)\right\| ^2\right]}_{=: Z_2^k}\\
		&+\frac{4}{N}\sum_{i=1}^N\mathbb{E}\underbrace{\left[\left\|\left( \lambda I_d+H^k\right) \left(x^{i,k}-x^*\right)\right\|\left\|\left( \lambda I_d+H^k\right)\left(\bar{x}^{\star,k}-x^*\right)\right\| \right]}_{=: Z_3^k }+2\alpha_k^2M_g^2,\\
	\end{split}
\end{equation}
where the first inequality follows from $(a+b)^2\leq2(a^2+b^2)$, $\forall a,b\in\mathbb{R}$ and \eqref{bound}, and the last inequality uses adding and subtracting $x^*$ and Cauchy Schwarz inequality. 
We apply \eqref{eta} and H{\"o}lder's inequality to get
\begin{align*}
	\mathbb{E}\left[ Z_1^k\right]=&\sum_{l=1}^d\mathbb{E}\left[\left(\lambda+\eta^k_l \right)^2  \right] \mathbb{E}\left[ \left(x^{i,k}_l-x^*_l \right)^2 \right] =\left(\lambda^2+\delta^2 \right)\mathbb{E}\left[W_k \right] ,\\
	\mathbb{E}\left[ Z_2^k\right]=&\sum_{l=1}^d\mathbb{E}\left[\left(\lambda+\eta^k_l \right)^2  \right] \mathbb{E}\left[ \left(\bar{x}^{\star,k}_l-x^*_l \right)^2 \right] =\left(\lambda^2+\delta^2 \right)\mathbb{E}\left[\left\|\bar{x}^{\star,k}-x^* \right\|^2 \right],\\
	\mathbb{E}\left[ Z_3^k\right]\leq& \sqrt{\sum_{l=1}^d\mathbb{E}\left[ \left(\lambda+\eta^k_l\right)^2\right]\mathbb{E}\left[ \left(x^{i,k}_l-x^*_l\right)^2\right]} \sqrt{ \sum_{l=1}^d\mathbb{E}\left[ \left(\lambda+\eta^k_l\right)^2\right]\mathbb{E}\left[ \left(\bar{x}^{\star,k}_l-x^*_l\right)^2\right]} \\
	=&\left(\lambda^2+\delta^2 \right)\sqrt{\mathbb{E}\left[\left\|x^{i,k}-x^* \right\|^2  \right]} \sqrt{\mathbb{E}\left[\left\|\bar{x}^{\star,k}-x^* \right\|^2  \right]}.
\end{align*}
Thus, we plug these relations back to \eqref{ET1}, and obtain
\begin{equation}\label{ET11}
	\begin{split}
		\mathbb{E}\left[T_1^k \right]\leq& 2\left( \lambda^2+\delta^2\right) \left( \mathbb{E}\left[W_k\right]+2\sqrt{\mathbb{E}\left[W_k \right]}\sqrt{\mathbb{E}\left[\left\|\bar{x}^{\star,k}-x^* \right\|^2  \right]}+\mathbb{E}\left[\left\|\bar{x}^{\star,k}-x^* \right\|^2  \right]\right) \\
		&+2\alpha_k^2M_g^2.
	\end{split}
\end{equation}
Similarly, we have
\begin{equation}\label{T2}
	\begin{split}
		T^k_2=&\frac{2}{N}\sum_{i=1}^N\left(x^{i,k}-x^*\right)^\top\left(-\left(\lambda I_d+H^k\right) \left(x^{i,k}-x^*+x^*-\bar{x}^{\star,k} \right)-\alpha_kg^{i,k}\right)\\
		\leq&-\frac{2}{N}\sum_{i=1}^N\sum_{l=1}^d\left(\lambda+\eta^k_l\right)\left(x^{i,k}_l-x^*_l\right)^2+\frac{2}{N}\sum_{i=1}^N\alpha_k\left\| g^{i,k}\right\| \left\|x^{i,k}-x^* \right\| \\
		&+\frac{2}{N}\sum_{i=1}^N \left\| x^{i,k}-x^*\right\| \left\|    \left(\lambda I_d+H^k\right) \left(\bar{x}^{\star,k}-x^* \right)\right\| . 
	\end{split}
\end{equation}
Taking expectation in \eqref{T2}, by H{\"o}lder's inequality and \eqref{bound}, it holds that
\begin{equation}\label{ET2}
	\begin{split}
		\mathbb{E}\left[T_2^k\right]\leq&-2\lambda\mathbb{E}\left[W_k\right]+\frac{2\alpha_kM_g}{N}\sum_{i=1}^N\sqrt{\mathbb{E}\left[\left\| x^{i,k}-x^*\right\|^2\right]}\\
		&+\frac{2}{N}\sum_{i=1}^N\sqrt{\mathbb{E}\left[\left\|x^{i,k}-x^*\right\|^2\right]}\sqrt{\lambda^2+\delta^2}\sqrt{\mathbb{E}\left[\left\|\bar{x}^{\star,k}-x^*\right\|^2\right]}.
	\end{split}
\end{equation}
Moreover, by Cauchy-Schwarz inequality, one has
\begin{equation*}
	\frac{1}{N}\sum_{i=1}^N\sqrt{\mathbb{E}\left[\left\|x^{i,k}-x^*\right\|^2\right]}\leq\frac{1}{\sqrt{N}}\sqrt{\sum_{i=1}^N\mathbb{E}\left[\left\|x^{i,k}-x^*\right\|^2\right]}=\sqrt{\mathbb{E}\left[W_k\right]},
\end{equation*}
which along with \eqref{ET2} leads to
\begin{equation}\label{ET22}
	\begin{split}
		\mathbb{E}\left[T_2^k\right]\leq&-2\lambda\mathbb{E}\left[W_k\right]+2\alpha_kM_g\sqrt{\mathbb{E}\left[W_k\right]}\\
		&+2\sqrt{\lambda^2+\delta^2}\sqrt{\mathbb{E}\left[W_k\right]}\sqrt{\mathbb{E}\left[\left\|\bar{x}^{\star,k}-x^*\right\|^2\right]}.
	\end{split}
\end{equation}
Finally, combining \eqref{Wk+1}, \eqref{ET11} and \eqref{ET22}, we get the desired result.
\end{proof}
In the next theorem, we provide the iteration complexity in expectation for the ESCBO algorithm. For a given $\xi\in(0,1)$, we first define 
\begin{equation}\label{define}
	\begin{split}
		&\gamma:=1-(1-\xi)\left(2\lambda-2\lambda^2-2\delta^2 \right) ,\\
		&\kappa:=\min\left\{\frac{\xi\left( 2\lambda-2\lambda^2-2\delta^2\right) }{4\left( 2\lambda^2+2\delta^2+\sqrt{ \lambda^2+\delta^2}+1 \right) },\sqrt{ \frac{\xi\left( 2\lambda-2\lambda^2-2\delta^2\right)}{2\left(2\lambda^2+2\delta^2+2 \right) }}\right\},\\
		&C_{4,k}:=\kappa\sqrt{\mathbb{E}\left[ W_k\right]}.
	\end{split}
\end{equation}
And using the constants given in Assumption \ref{ass6} and \eqref{define}, for any $k\geq 0$, we set
\begin{equation}\label{qr}
	\begin{split}
		q_k&:=\frac{1}{2}\min\left\lbrace f_\infty, \left( \frac{\mu C_{4,k}}{\sqrt{2}}\right)^{1/\nu} \right\rbrace, \\
		r_k&:=\max\left\lbrace s\in(0,R_0]: f_s(x)-f^*\leq q_k\right\rbrace,
	\end{split}
\end{equation}
where $f_s$ is defined as in \eqref{fr}. For a given $\varepsilon>0$, set
\begin{equation}\label{K}
	\begin{split}
		&K:=\sup\left\lbrace k> 0: \mathbb{E}\left[ W_{k'}\right]>\varepsilon ~\mbox{and}~ \mathbb{E}\left[\left\| \bar{x}^{\star,k'}-x^*\right\|^2 \right] \leq C_{4,k'}^2, ~\forall k'\in[0,k)\right\rbrace,\\
		&K_\varepsilon:=\left\lceil \log_{1/\gamma}\frac{\mathbb{E}\left[W_0\right]}{\varepsilon} \right\rceil. 
	\end{split}
\end{equation}
\begin{theorem}
	Under the Assumptions \ref{ass1} and \ref{ass6}, for any given $\varepsilon\in\left( 0,\mathbb{E}\left[W_0\right]\right) $, let $K$ and $K_\varepsilon$ be defined in \eqref{K}, and assume that the parameters $\{\lambda, \delta, \{\alpha_k\}_{k\geq0}\}$ in the ESCBO algorithm satisfy 
	\begin{equation}\label{condition5}
		2\lambda-2\lambda^2-2\delta^2>0,\quad 0<\gamma<1,\quad \alpha_kM_g\leq\kappa\sqrt{\varepsilon},\quad \forall k< K.
	\end{equation}
Then, if $\mathcal{I}_{r_K}^K$ is nonempty, there exists a $\beta_0>0$ such that for all $\beta>\beta_0$, the ESCBO algorithm satisfies
\begin{equation*}
	\min_{k\in[0,K_\varepsilon]}\mathbb{E}\left[ W_{k}\right]\leq\varepsilon,
\end{equation*}  
and it holds $\mathbb{E}\left[ W_{k}\right]<\gamma^k\mathbb{E}\left[ W_{0}\right]$ until $\mathbb{E}\left[ W_{k}\right]$ reaches the prescribed accuracy $\varepsilon$.
\end{theorem}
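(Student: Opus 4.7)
My plan is a Lyapunov-style bootstrap on the sequence $\mathbb{E}[W_k]$. Under the two running hypotheses encoded in the definition of $K$ --- $\mathbb{E}[W_{k'}]>\varepsilon$ and $\mathbb{E}[\|\bar{x}^{\star,k'}-x^*\|^2]\le C_{4,k'}^2$ for every $k'<k$ --- I aim to extract from Lemma \ref{le-EW} a strict one-step contraction $\mathbb{E}[W_{k+1}]\le \gamma\mathbb{E}[W_k]$; iterating produces the geometric decay announced in the theorem, so the residual task is to use Lemma \ref{le-barx} together with the largeness of $\beta$ to prevent the hypothesis from failing before $\mathbb{E}[W_k]$ drops below $\varepsilon$.

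For the one-step contraction I would substitute $\mathbb{E}[\|\bar{x}^{\star,k}-x^*\|^2]\le \kappa^2\mathbb{E}[W_k]$ together with the step-size bound $\alpha_kM_g\le \kappa\sqrt{\varepsilon}\le \kappa\sqrt{\mathbb{E}[W_k]}$ (valid because $\mathbb{E}[W_k]>\varepsilon$ for $k<K$) into Lemma \ref{le-EW}. Collecting coefficients of $\mathbb{E}[W_k]$ gives
\begin{equation*}
\mathbb{E}[W_{k+1}]\le\bigl[1-(2\lambda-2\lambda^2-2\delta^2)+(2\lambda^2+2\delta^2+2)\kappa^2+(4\lambda^2+4\delta^2+2\sqrt{\lambda^2+\delta^2}+2)\kappa\bigr]\mathbb{E}[W_k].
\end{equation*}
The piecewise definition of $\kappa$ in \eqref{define} is engineered precisely so that the $\kappa^2$ and $\kappa$ contributions are each bounded by $\xi(2\lambda-2\lambda^2-2\delta^2)/2$, whence the bracket is at most $\gamma$. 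Unrolling gives $\mathbb{E}[W_k]<\gamma^k\mathbb{E}[W_0]$ throughout $k\le K$, which already proves the second assertion of the theorem; once $K\ge K_\varepsilon$, the definition of $K_\varepsilon$ additionally yields $\mathbb{E}[W_{K_\varepsilon}]\le \gamma^{K_\varepsilon}\mathbb{E}[W_0]\le\varepsilon$, as required.

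The main obstacle is showing $K\ge K_\varepsilon$, for which I would argue by contradiction: if $K<K_\varepsilon$ with $\mathbb{E}[W_K]>\varepsilon$, the definition of $K$ forces $\mathbb{E}[\|\bar{x}^{\star,K}-x^*\|^2]>C_{4,K}^2$. Applying Lemma \ref{le-barx} with $q=q_K$, $r=r_K$ (legitimate by the nonemptiness of $\mathcal{I}_{r_K}^K$), the tuned choice of $q_K$ in \eqref{qr} gives $(q_K+f_{r_K}-f^*)^\nu/\mu\le(2q_K)^\nu/\mu\le C_{4,K}/\sqrt{2}$; the stochastic part is controlled by $\sum_{i=1}^N\|x^{i,K}-x^*\|\le N\sqrt{W_K}$ together with $|\mathcal{I}_{r_K}^K|\ge 1$, yielding the pointwise inequality $\|\bar{x}^{\star,K}-x^*\|\le C_{4,K}/\sqrt{2}+N\mathrm{e}^{-\beta q_K}\sqrt{W_K}$. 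Expanding the square, using Jensen's inequality $\mathbb{E}[\sqrt{W_K}]\le\sqrt{\mathbb{E}[W_K]}$, and recalling $C_{4,K}=\kappa\sqrt{\mathbb{E}[W_K]}$ produces
\begin{equation*}
\mathbb{E}[\|\bar{x}^{\star,K}-x^*\|^2]\le\Bigl[\tfrac{\kappa^2}{2}+\sqrt{2}\,\kappa N\mathrm{e}^{-\beta q_K}+N^2\mathrm{e}^{-2\beta q_K}\Bigr]\mathbb{E}[W_K].
\end{equation*}

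The technical crux is then to select $\beta_0$ so that the bracket is at most $\kappa^2$, which would imply $\mathbb{E}[\|\bar{x}^{\star,K}-x^*\|^2]\le C_{4,K}^2$ and contradict the strict violation. Because $\mathbb{E}[W_K]\in(\varepsilon,\mathbb{E}[W_0]]$ on the admissible range, $q_K$ admits a positive uniform lower bound $\underline{q}:=\frac{1}{2}\min\{f_\infty,(\mu\kappa\sqrt{\varepsilon}/\sqrt{2})^{1/\nu}\}$ depending only on $\varepsilon$ and the constants in Assumption \ref{ass6}; any $\beta_0$ with $\sqrt{2}\,\kappa N\mathrm{e}^{-\beta_0\underline{q}}\le\kappa^2/4$ and $N^2\mathrm{e}^{-2\beta_0\underline{q}}\le\kappa^2/4$ therefore forces $\mathbb{E}[\|\bar{x}^{\star,K}-x^*\|^2]\le\kappa^2\mathbb{E}[W_K]=C_{4,K}^2$, contradicting the assumed failure of the bootstrap hypothesis. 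Hence $K\ge K_\varepsilon$, and the contraction from the second paragraph closes the argument.
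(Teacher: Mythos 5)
Your proposal is correct and follows essentially the same route as the paper: the same substitution of $\mathbb{E}\left[\left\|\bar{x}^{\star,k}-x^*\right\|^2\right]\le C_{4,k}^2$ and $\alpha_kM_g\le\kappa\sqrt{\varepsilon}\le\kappa\sqrt{\mathbb{E}[W_k]}$ into Lemma \ref{le-EW} to obtain the $\gamma$-contraction (with the two pieces of $\kappa$ in \eqref{define} each absorbing half of $\xi(2\lambda-2\lambda^2-2\delta^2)$), and the same use of Lemma \ref{le-barx} with $q_K,r_K$ and large $\beta$ to rule out failure of the bootstrap hypothesis; your explicit uniform lower bound $\underline{q}$ and the retained factor $C_{4,K}$ in the cross term are in fact slightly cleaner than the corresponding display in the paper. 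One small wording fix: your contradiction only excludes the event $\{K<K_\varepsilon \mbox{ and } \mathbb{E}[W_K]>\varepsilon\}$, so the correct inference is ``either $K\ge K_\varepsilon$ or $\mathbb{E}[W_K]\le\varepsilon$'' (the latter is the paper's Case 2 and also yields $\min_{k\in[0,K_\varepsilon]}\mathbb{E}[W_k]\le\varepsilon$), rather than $K\ge K_\varepsilon$ outright.
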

\begin{proof}
First, we claim that $K>0$. Indeed, it suffices to prove that $\mathbb{E}\left[W_0 \right]>\varepsilon$ and $\mathbb{E}\left[\left\| \bar{x}^{\star,0}-x^*\right\|^2 \right] \leq C_{4,0}^2$. The former is immediate by the assumption. From \eqref{qr}, we have $r_0\leq R_0$ and $q_0+f_{r_0}-f^*\leq 2q_0\leq f_\infty$. Then by Lemma \ref{le-barx} and \eqref{qr}, we have
	\begin{equation*}
		\begin{split}
			\left\|\bar{x}^{\star,0}-x^*\right\|\leq&\frac{(q_0+f_{r_0}-f^*)^\nu}{\mu}+\frac{\mathrm{e}^{-\beta q_0}}{\left|\mathcal{I}_{r_0}^0\right| }\sum_{i=1}^N\left\|x^{i,0}-x^*\right\|\\
			\leq&\frac{C_{4,0}}{\sqrt{2}}+\frac{\mathrm{e}^{-\beta q_0}}{\left|\mathcal{I}_{r_0}^0\right| }\sum_{i=1}^N\left\|x^{i,0}-x^*\right\|.
		\end{split}		
	\end{equation*}
Therefore, we can find a $\tilde{\beta}>0$ so that for any $\beta>\tilde{\beta}$, $\left\|\bar{x}^{\star,0}-x^*\right\|\leq C_{4,0}$, which gives the latter.

	By Lemma \ref{le-EW}, for all $k\in[0,K)$, we have 
	\begin{equation}\label{EWk+1}
		\begin{split}
			&\mathbb{E}\left[ W_{k+1}\right]\\
			<&\left( 1+2\lambda^2+2\delta^2-2\lambda\right) \mathbb{E}\left[ W_k\right] +\left(2
			\lambda^2+2\delta^2 \right)C_{4,k}^2+2C_{4,k}^2\\
			&+\left(4\lambda^2+4\delta^2+\sqrt{\lambda^2+\delta^2} \right)C_{4,k}\sqrt{\mathbb{E}\left[W_k \right]}
			+2C_{4,k}\sqrt{\mathbb{E}\left[W_k \right]}\\
			=&\left( 1+2\lambda^2+2\delta^2-2\lambda+\left(2
			\lambda^2+2\delta^2+2\right)\kappa^2
			+2\left(2\lambda^2+2\delta^2+\sqrt{\lambda^2+\delta^2}+1 \right)\kappa\right) \mathbb{E}\left[W_k \right]\\
			\leq&\left( 1-(1-\xi)\left(2\lambda-2\lambda^2-2\delta^2 \right) \right)\mathbb{E}\left[W_k \right]\\
			=&\gamma\mathbb{E}\left[W_k \right]\leq\gamma^{k+1}\mathbb{E}\left[W_0 \right],
		\end{split}
	\end{equation}
where the first inequality uses the definition of $K$ in \eqref{K} and $\alpha_kM_g\leq\kappa\sqrt{\varepsilon}<\kappa\sqrt{\mathbb{E}[W_k]}= C_{4,k}$ by \eqref{condition5}, the first equality, the second inequality and the last equality all follow from \eqref{define}.
To derive the conclusion, it remains to check three cases:

\textbf{Case 1}: $K_\varepsilon\leq K$. By \eqref{K} and \eqref{EWk+1}, for all $k\in[0,K_\varepsilon]$, we have $\mathbb{E}\left[ W_{k}\right]<\gamma^{k}\mathbb{E}\left[W_0 \right]$, which together with the definition of $K_\varepsilon$ in \eqref{K} yields that $\mathbb{E}\left[ W_{K_\varepsilon}\right]\leq\varepsilon$. 

\textbf{Case 2}: $K_\varepsilon> K$ and $\mathbb{E}\left[ W_{K}\right]\leq\varepsilon$. In this case, the conclusion is clearly valid.

\textbf{Case 3}: $K_\varepsilon> K$, $\mathbb{E}\left[ W_{K}\right]>\varepsilon$ and $\mathbb{E}\left[\left\| \bar{x}^{\star,K}-x^*\right\|^2 \right] > C_{4,K}^2$. We claim that if $\mathbb{E}\left[ W_{K}\right]>\varepsilon$, there exists a $\beta_0>0$ so that for all $\beta\geq\beta_0$, it holds
\begin{equation}\label{contradict}
	\mathbb{E}\left[\left\| \bar{x}^{\star,K}-x^*\right\|^2 \right] \leq C_{4,K}^2,
\end{equation}
which leads to a contradiction to $\mathbb{E}\left[\left\| \bar{x}^{\star,K}-x^*\right\|^2 \right] > C_{4,K}^2$ and proves that \textbf{Case 3} cannot occur.
To end the proof, we shall show that \eqref{contradict} holds. Indeed, by Cauchy-Schwarz inequality, one has
\begin{equation*}
	\frac{1}{N}\sum_{i=1}^N\left\|x^{i,K}-x^*\right\|\leq\frac{1}{\sqrt{N}}\sqrt{\sum_{i=1}^N\left\|x^{i,K}-x^*\right\|^2}=\sqrt{W_K}.
\end{equation*} 
Moreover, from \eqref{qr}, we have $r_K\leq R_0$ and $q_K+f_{r_K}-f^*\leq 2q_K\leq f_\infty$.
We can apply Lemma \ref{le-barx} with $q_K$ and $r_K$ defined in \eqref{qr} to get
\begin{equation*}
	\left\|\bar{x}^{\star,K}-x^*\right\|\leq\frac{(q_K+f_{r_K}-f^*)^\nu}{\mu}+\frac{\mathrm{e}^{-\beta q_K}}{\left|\mathcal{I}_{r_K}^K\right| }\sum_{i=1}^N\left\|x^{i,K}-x^*\right\|\leq\frac{C_{4,K}}{\sqrt{2}}+N\mathrm{e}^{-\beta q_K}\sqrt{W_K}.
\end{equation*}
Then, we have
\begin{equation}\label{barxK}
	\mathbb{E}\left[\left\|\bar{x}^{\star,K}-x^*\right\|^2\right]\leq\frac{C_{4,K}^2}{2}+\sqrt{2}N\mathrm{e}^{-\beta q_K}\mathbb{E}\left[\sqrt{W_K}\right]+N^2\mathrm{e}^{-2\beta q_K}\mathbb{E}\left[W_K\right].
\end{equation}
Therefore, we can find a $\beta_0>\tilde{\beta}>0$ so that for any $\beta>\beta_0$, 
\begin{equation*}
	\sqrt{2}N\mathrm{e}^{-\beta q_K}\mathbb{E}\left[\sqrt{W_K}\right]+N^2\mathrm{e}^{-2\beta q_K}\mathbb{E}\left[W_K\right]\leq\frac{C_{4,K}^2}{2}, 
\end{equation*}
which together with \eqref{barxK} gives the contradiction \eqref{contradict} and concludes the proof.
\end{proof}
\section{Numerical experiments}\label{sec7}
In this section, to illustrate the numerical performance of the ESCBO algorithm, we apply it to minimize some nonconvex benchmark functions and train a class of deep neural networks.  
\subsection{Validation of global consensus}\label{sec7.1}
In this part, we verify the emergence of global consensus of the ESCBO algorithm by minimizing the following typical benchmark function, namely Rastrigin \cite{Storn1997}:
\begin{equation}\label{f1}
	f(x)=\frac{1}{d}\sum_{l=1}^d\left(x_l^2-10\cos(2\pi x_l)+10\right),
\end{equation}
where $x=\left( x_1,\ldots,x_d\right)^\top\in\mathbb{R}^d$. 
Rastrigin function has the unique global minimizer $x^*=(0,\ldots,0)^\top\in\mathbb{R}^d$ with $f(x^*)=0$. It is known that for $x\in(-5,5)^d$, the number of its local minimizers is $10^d$, which is related to dimensionality. It is highly nonconvex and multimodal.
We perform the ESCBO algorithm for minimizing \eqref{f1} with $d=2$. Initial points $x^{i,0}$, $i\in[N]$ are uniformly sampled from $[-5,5]^2$. The parameters in the ESCBO algorithm are chosen as follows:
\begin{equation*}
	N=20,\quad\lambda=0.01, \quad \delta=0.1, \quad \beta=100,\quad \sigma=0.0001,\quad \alpha_k=\frac{1}{2k}.
\end{equation*}
We terminate the ESCBO algorithm once $x^{i,k+1}$ and $x^{i,k}$ satisfy
\begin{equation}\label{stop1}
	\max_{i\in[N]} \left\| x^{i,k+1}-x^{i,k}\right\| \leq 1\mathrm{e}\mbox{-}06 \quad   \mbox{and} \quad \max_{i\in[N]} \frac{\left| f(x^{i,k+1})-f(x^{i,k})\right| }{\left\| x^{i,k+1}-x^{i,k}\right\|} \leq 1\mathrm{e}\mbox{-}06.
\end{equation}

The output of our algorithm is $x^{1,k}=x^{2,k}=(-5.00\mathrm{e}$-$05, -5.00\mathrm{e}$-$05)^\top$ when $k\geq536$. To visualize the existence of global consensus in practice, we depict in Fig. \ref{fig1} the positions of the particles at different iterations $k\in\{0, 50, 150, 536\}$, respectively. From Fig. \ref{fig1}, we can see that the particles ultimately exhibit global consensus and successfully find the approximate global minimizer of Rastrigin function in \eqref{f1}. We remark in passing that although we assume the condition \eqref{condition1} to guarantee the global consensus of the ESCBO algorithm, the simulation results suggest that this condition is actually not essential for this problem. 

\begin{figure}[h]
	\vspace*{-5mm}
	\centering
	\includegraphics[width=0.95\linewidth]{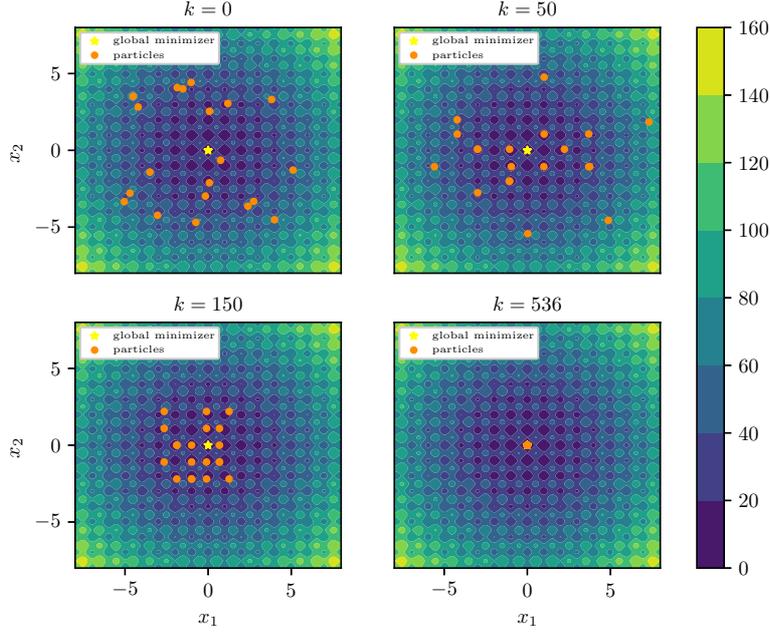}
	\caption{Positions of particles at some different iterations $k$ in Section \ref{sec7.1} }
	\label{fig1}
\end{figure}

\subsection{Numerical comparisons with the vanilla-CBO algorithm}\label{sec7.2}
In what follows, we numerically test the excellent performance of the ESCBO algorithm. We compare the ESCBO algorithm with the vanilla-CBO algorithm in \cite{Ha2021} for several nonconvex benchmark problems in optimization, including Rastrigin function in \eqref{f1} and other six functions, whose formulas and global minimizers are summarized in Table \ref{function}. These objective functions are introduced in \cite{Ali2005,Jamil2013} and the corresponding 2D plots are shown in Fig. \ref{2Dfig}.
\begin{table}[h!]
	\caption{Considered benchmark functions in Section \ref{sec7.2}}
	\centering
	\renewcommand\arraystretch{2}
		\begin{tabular}{lll} 
			\hline
			\multirow{2}{*}{Benchmark}& \multirow{2}{*}{Formula} &  Global minimum\\
			&&and minimizer\\
			\hline 
			\hline
			Salomon&$1-\cos\left (2\pi\sqrt{\sum_{l=1}^dx_l^2}\right)+0.1\sqrt{\sum_{l=1}^dx_l^2}$&$0$ at $(0,\ldots,0)$\\
			\hline
			Griewank&$1+\sum_{l=1}^d\frac{x_l^2}{4000}-\prod_{l=1}^d\cos\left(\frac{x_l}{\sqrt{l}} \right) $&$0$ at $(0,\ldots,0)$\\
			\hline
			\multirow{2}{*}{Ackley} &$-20\exp\left (-0.2\sqrt{\frac{1}{d}\sum_{l=1}^{d}x_l^2}\right )-\exp\left (\frac{1}{d}\sum_{l=1}^{d}\cos(2\pi x_l)\right )$& \multirow{2}{*}{$0$ at $(0,\ldots,0)$}\\ 
			&$+ 20 + \exp(1)$& \\		
			\hline
			Xin-She Yang $4$&$\left(\sum_{l=1}^d \sin^2(x_l)-\mathrm{e}^{-\sum_{l=1}^dx_l^2}\right) \mathrm{e}^{-\sum_{l=1}^d\sin^2\sqrt{|x_l|} }+1 $&$0$ at $(0,\ldots,0)$\\
			\hline
			Bartels Conn& $\left|x_1^2+x_2^2+x_1x_2 \right|+\left|\sin(x_1) \right|+\left| \cos(x_2)\right|  $ &$1$ at $(0,0)$\\
			\hline
			\multirow{3}{*}{Schaffer $4$}&\multirow{3}{*}{$0.5+\frac{\cos^2\left( \sin\left( \left|x_1^2-x_2^2 \right| \right) \right)-0.5 }{\left( 1+0.001\left(x_1^2+x_2^2 \right) \right)^2 }$}&$0.292579$ at \\
			&&$(0,\pm 1.253115)$,\\
			&&$(\pm 1.253115,0)$\\
			\hline
		\end{tabular}
	\label{function}
\end{table}

\begin{figure}[H]
	\centering
	\vspace{-5mm}
	\subfigtopskip=-4pt
	\subfigbottomskip=-1pt
	\subfigcapskip=-7pt
	\setlength{\abovecaptionskip}{17pt}
	\subfigure[Rastrigin]{
		\includegraphics[width=0.4\linewidth]{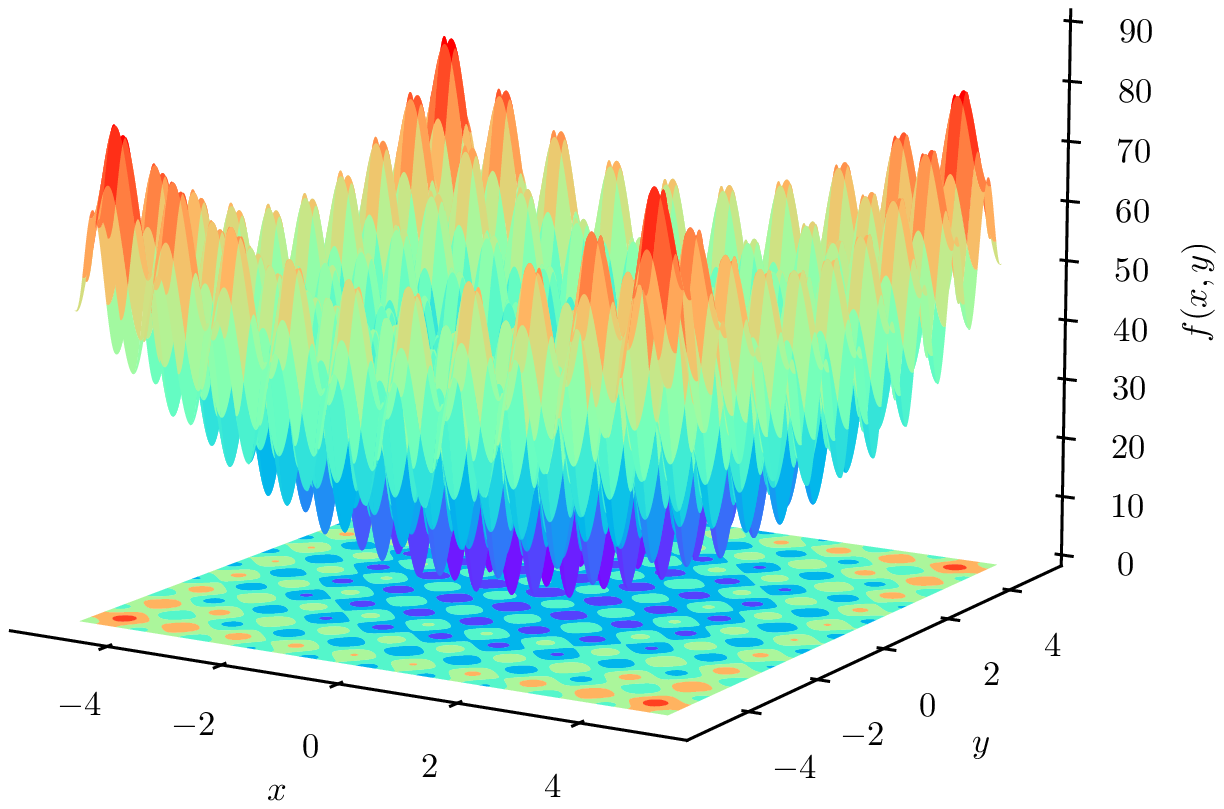}
	}
	
	\subfigure[Salomon]{
		\includegraphics[width=0.4\linewidth]{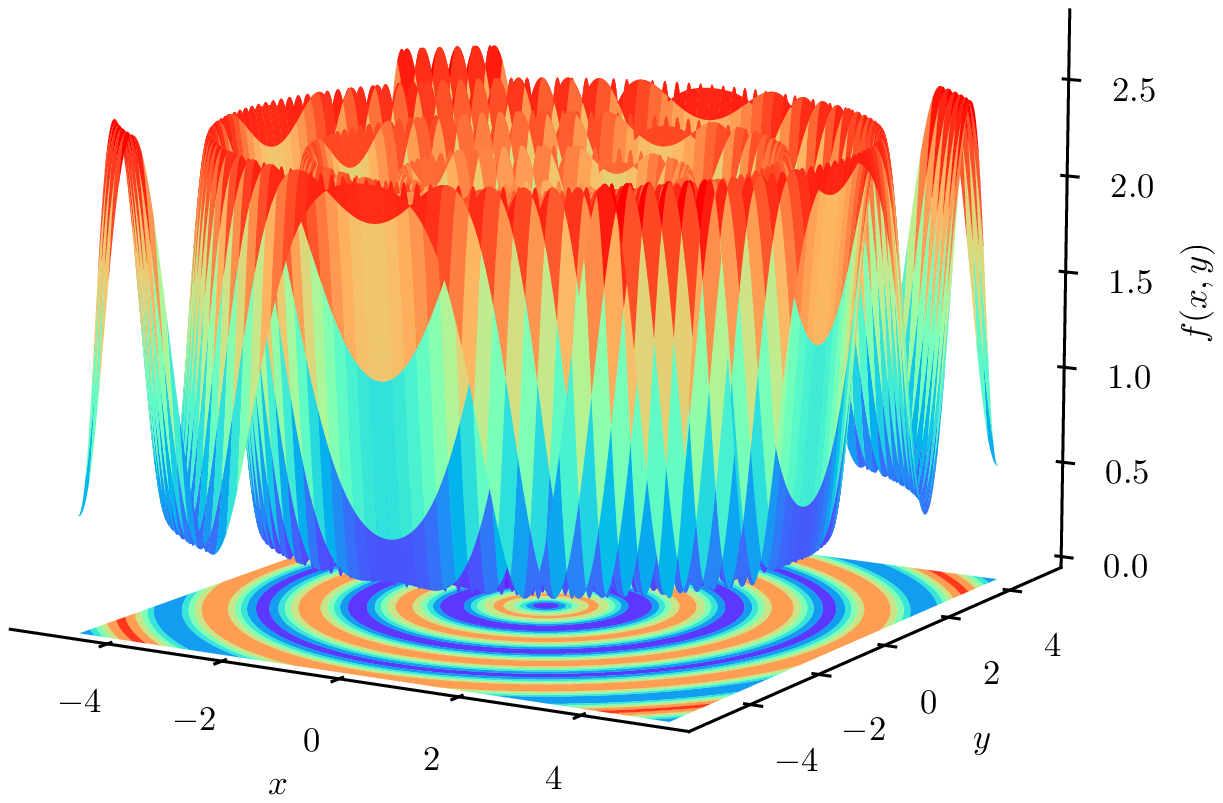}
	}
	\subfigure[Griewank]{
		\includegraphics[width=0.4\linewidth]{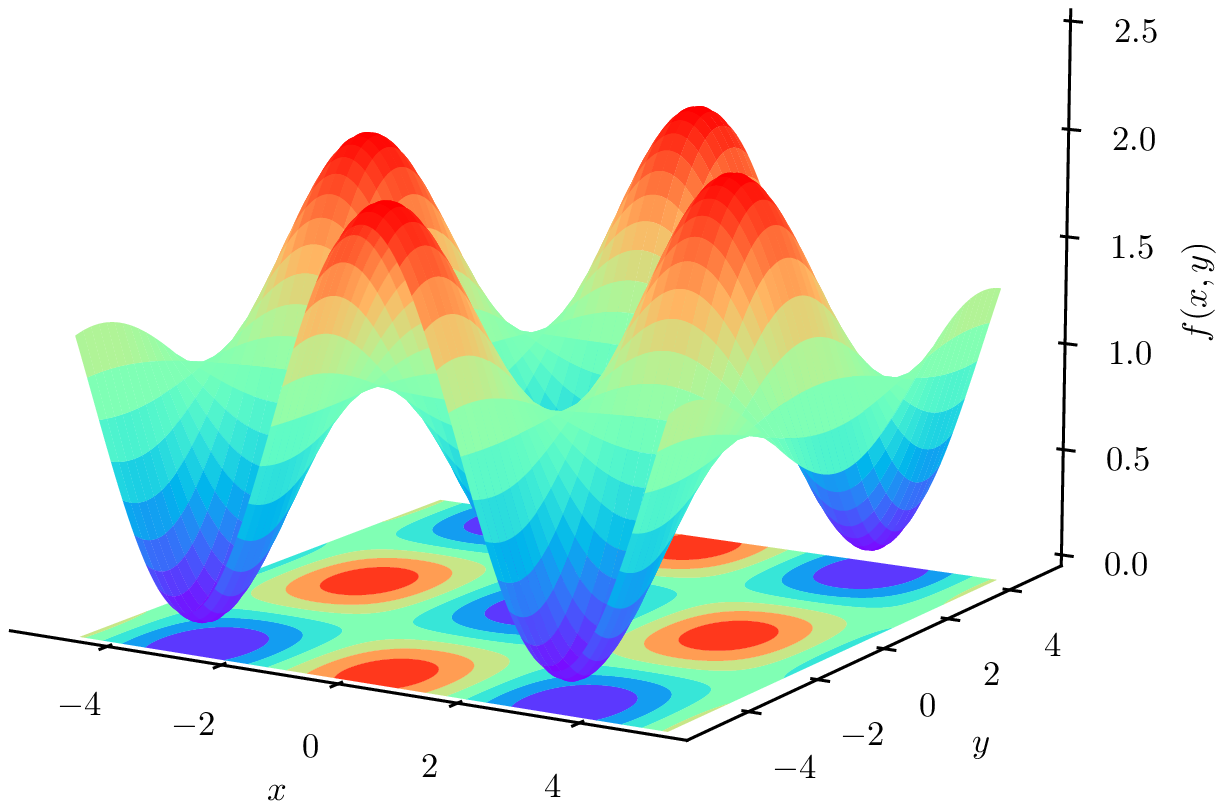}
	}
	\subfigure[Ackley]{
		\includegraphics[width=0.4\linewidth]{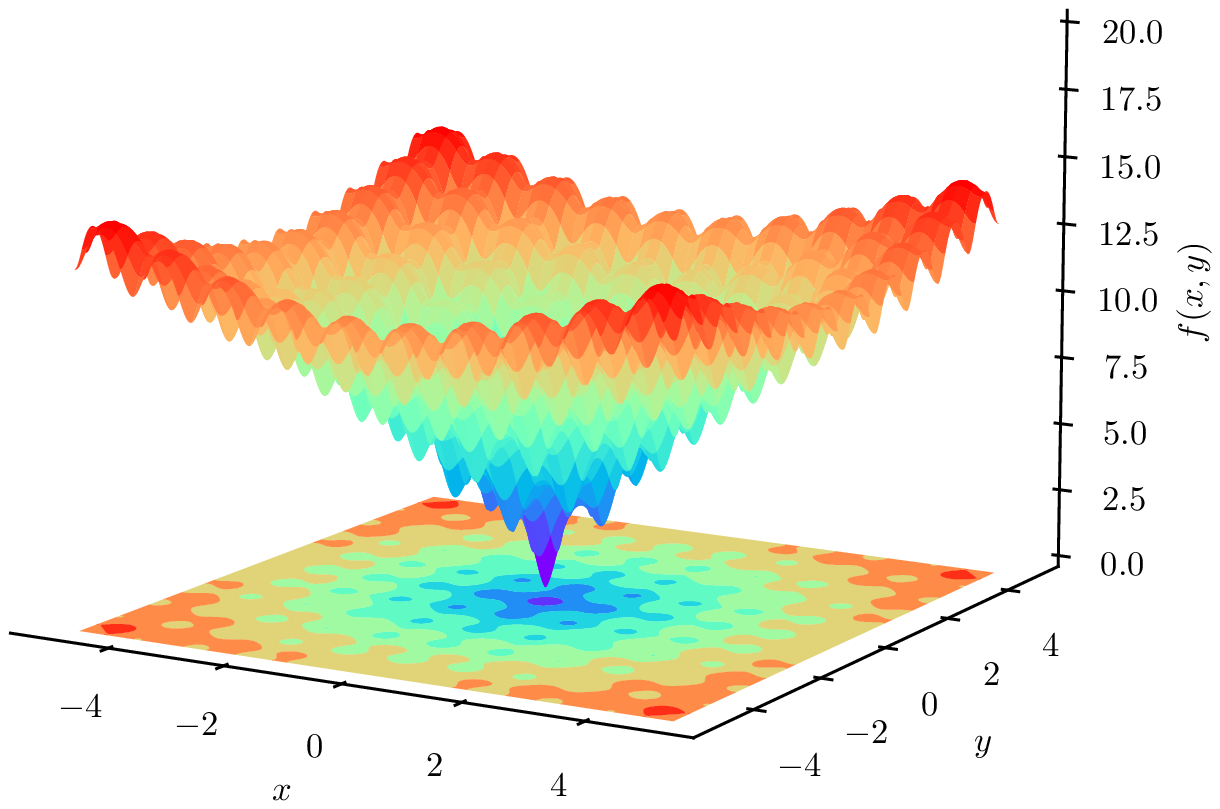}
	}
	\subfigure[Xin-She Yang $4$]{
		\includegraphics[width=0.4\linewidth]{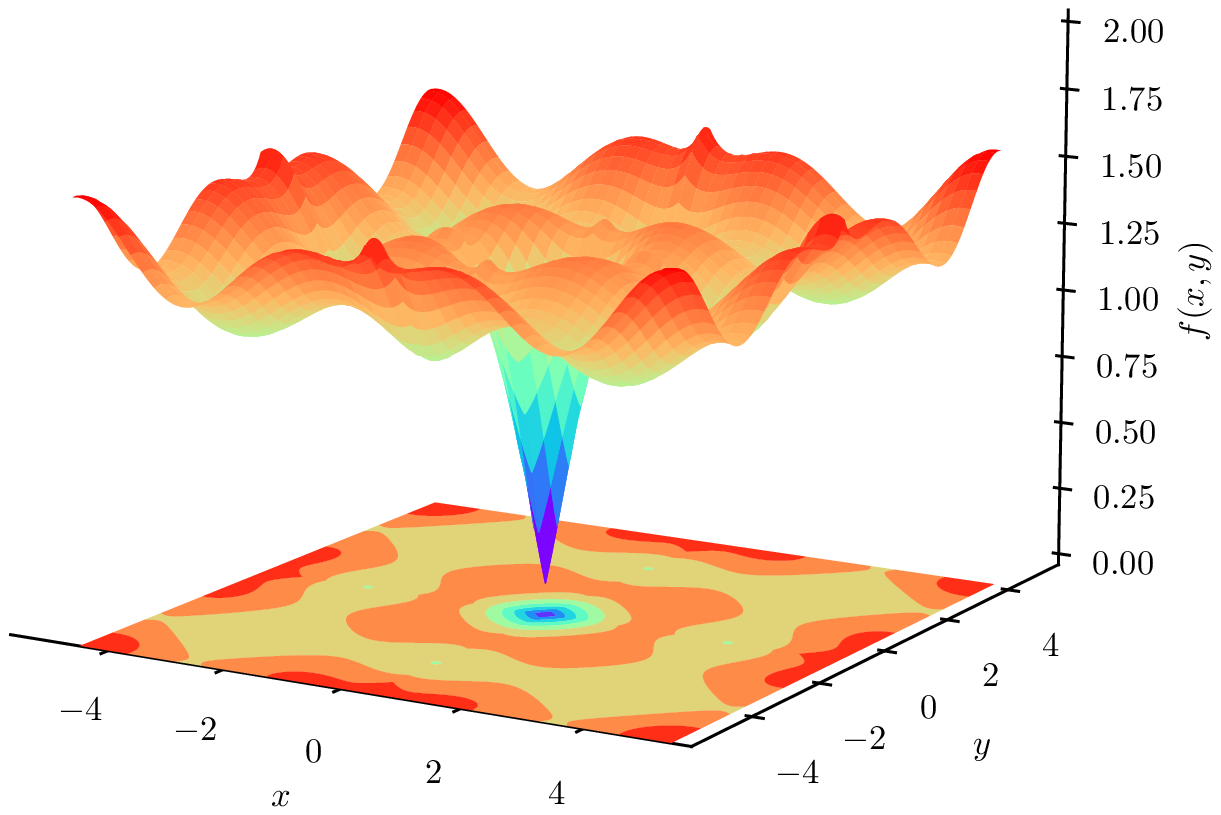}
	}
	\subfigure[Bartels Conn]{
		\includegraphics[width=0.4\linewidth]{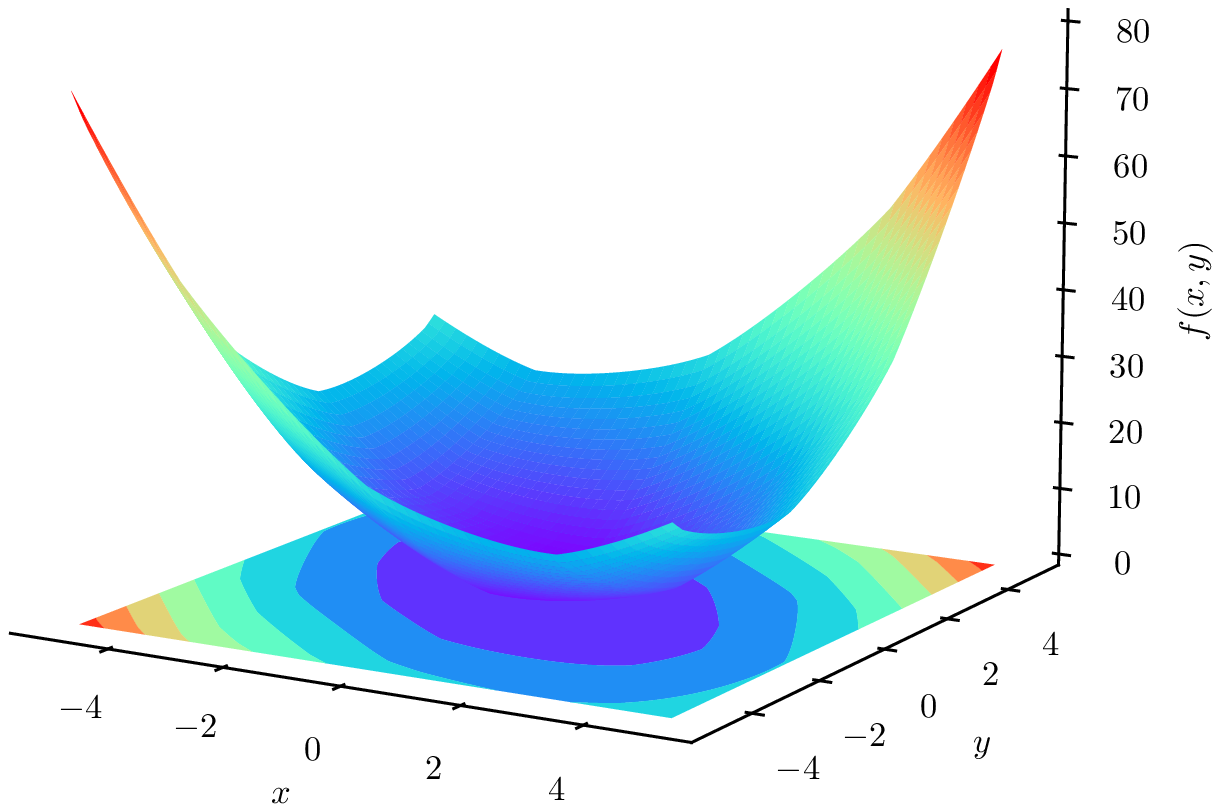}
	}
	\subfigure[Schaffer $4$]{
		\includegraphics[width=0.4\linewidth]{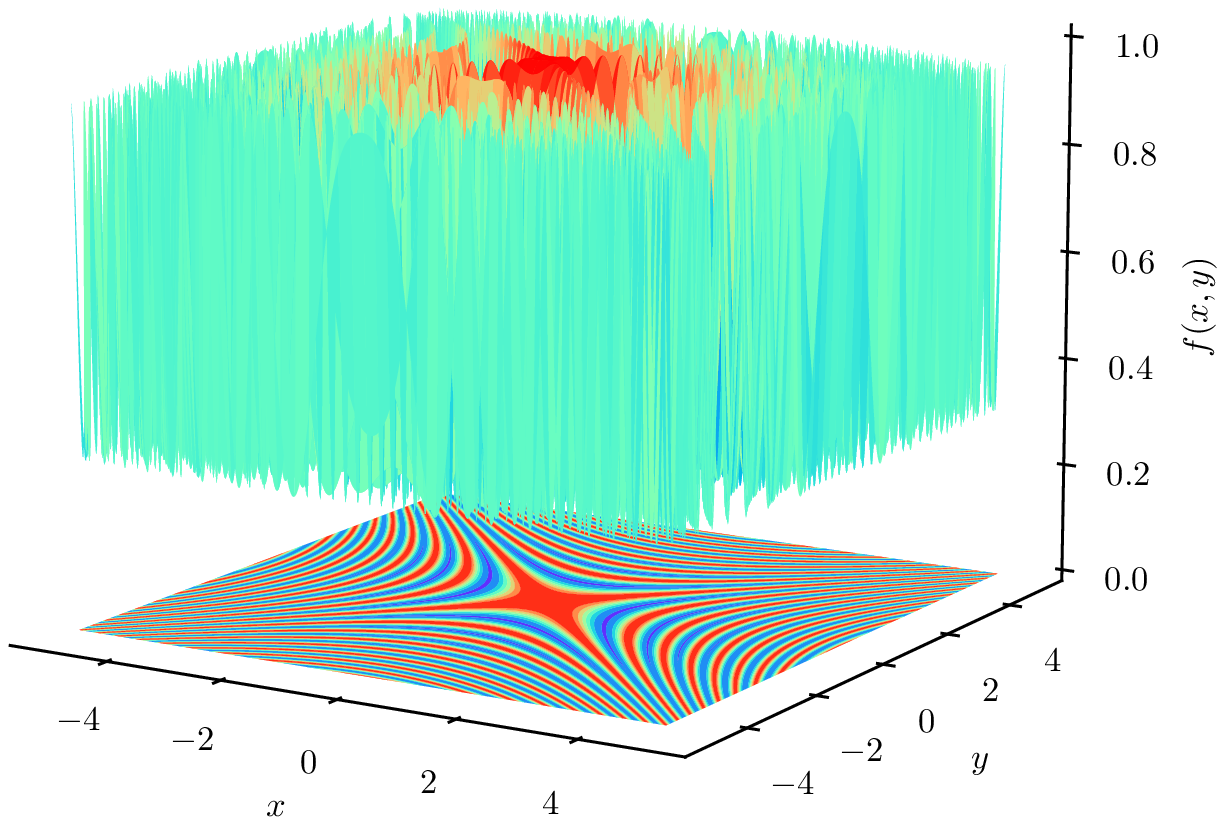}
	}
	\caption{2D plots of benchmark functions in Sections \ref{sec7.1} and \ref{sec7.2} with $d=2$.} 
	\label{2Dfig}
\end{figure}

In both algorithms, we set the parameters $\lambda=0.01$, $\delta=0.1$ and $\beta=1\mathrm{e}$+$20$. We additionally set the step size $\alpha_k=0.99^k$ and the finite difference parameter $\sigma=1\mathrm{e}$-$05$ in the ESCBO algorithm. The number of particles, the initial data and the dimension for each experiment are specified hereafter. The stopping criterion is set when the relation \eqref{stop1} is satisfied or the iteration repeats $10,000$ times. And the success criterion is defined as achieving
\begin{equation}\label{success}
	\max_{i\in[N]}\left\| x^{i,k}-x^*\right\| <1\mathrm{e} \mbox{-}03,
\end{equation}
which ensures that the algorithm obtains a solution quite close to global minimizer $x^*$. Based on \eqref{success}, we record the success rate (rate) to compare the accuracy of both algorithms.
In addition, two more performance measures are adopted, where the first is the averaged solution errors (sol-err) defined by $\frac{1}{N}\sum_{i=1}^N\left\| x^{i,k}-x^*\right\|^2$, and the other measurement is the averaged objective function value errors (fun-err) calculated by $\frac{1}{N}\sum_{i=1}^N \left| f(x^{i,k})-f(x^*)\right|$. These results are averaged over 100 runs of each instance.

Table \ref{com1} records the results of both algorithms for minimizing various benchmark functions mentioned above with different values of $N\in\{20d,40d,60d\}$. The initial data $x_0^i$, $i\in[N]$ are distributed following a uniform distribution in $[-5,5]^d$. As shown, both algorithms have higher success rates as the number of particles increases in general. Notably, the ESCBO algorithm outperforms the vanilla-CBO algorithm for these objective functions, and even for a small number of particles, the global minimizer of the problem can be found by the ESCBO algorithm with high success rates. Furthermore, there are cases for which the vanilla-CBO algorithm fails but the ESCBO algorithm succeeds, especially for the cases that the dimension of the function is large. 

In Table \ref{com2}, we report the results comparing the ESCBO algorithm and the vanilla-CBO algorithm with $N=120$ for different initial datas. Here, $\mathcal{U}(-3,3)$ and $\mathcal{U}(2,6)$ denote that the initial particles are distributed following a uniform distribution in $[-3,3]^d$ and $[2,6]^d$, respectively, while $\mathcal{N}_d(0,3)$ signifies the component of each initial point is drawn from a normal distribution $\mathcal{N}(0,3)$. It is observed that when the initial particles are situated close to the global minimizer, performance of both methods can be effectively enhanced. In the case of $\mathcal{U}(2,6)$, most initial particles are far away from the global minimizer, thereby it results in the failure of the vanilla-CBO algorithm. However, the ESCBO algorithm maintains a success rate of close to $100\%$ for these cases, which indicates that the ESCBO algorithm is less sensitive to the initial data.

In conclusion, these results demonstrate that the performance of the ESCBO algorithm is significantly better compared to the vanilla-CBO algorithm for finding the global minimizer of nonconvex functions.
\begin{table*}[!h]
	\caption{Comparisons between ESCBO and vanilla-CBO with different values of $N$} \label{com1}
	\begin{center}
		\renewcommand\arraystretch{1.2}
		\resizebox{\textwidth}{!}{
			\begin{tabular}{llllllll} \hline
				&&\multicolumn {3}{c}{ESCBO}&\multicolumn {3}{c}{vanilla-CBO}\\
				\cmidrule(lr{0pt}){3-5}\cmidrule(lr{0pt}){6-8}
				&&$N=20d$&$N=40d$&$N=60d$&$N=20d$&$N=40d$&$N=60d$\\
				\hline
				Rastrigin&rate&$\textbf{100\%}$&$\textbf{100\%}$&$\textbf{100\%}$&$12\%$&$29\%$&$37\%$\\
				$(d=3)$&sol-err&$\textbf{2.93}\mathrm{\textbf{e}}$\bf{-}$\textbf{13}$
				&$\textbf{2.23}\mathrm{\textbf{e}}$\bf{-}$\textbf{13}$&$\textbf{2.33}\mathrm{\textbf{e}}$\bf{-}$\textbf{13}$&$5.05\mathrm{e}$-$01$&$4.06\mathrm{e}$-$01$&$1.89\mathrm{e}$-$01$\\
				~&fun-err&$\textbf{1.94}\mathrm{\textbf{e}}$\bf{-}$\textbf{11}$
				&$\textbf{1.48}\mathrm{\textbf{e}}$\bf{-}$\textbf{11}$&$\textbf{1.54}\mathrm{\textbf{e}}$\bf{-}$\textbf{11}$&$5.81\mathrm{e}$-$01$&$1.95\mathrm{e}$-$01$&$1.58\mathrm{e}$-$01$\\
				\hline
				Rastrigin&rate&$\textbf{57\%}$&$\textbf{88\%}$&$\textbf{100\%}$&$0\%$&$0\%$&$0\%$\\
				$(d=10)$&sol-err&$\textbf{4.95}\mathrm{\textbf{e}}$\bf{-}$\textbf{01}$
				&$\textbf{1.98}\mathrm{\textbf{e}}$\bf{-}$\textbf{01}$&$\textbf{2.80}\mathrm{\textbf{e}}$\bf{-}$\textbf{10}$&$1.04\mathrm{e}$+$01$&$9.62\mathrm{e}$+$00$&$7.46\mathrm{e}$+$00$\\
				~&fun-err&$\textbf{4.97}\mathrm{\textbf{e}}$\bf{-}$\textbf{02}$
				&$\textbf{1.99}\mathrm{\textbf{e}}$\bf{-}$\textbf{02}$&$\textbf{5.56}\mathrm{\textbf{e}}$\bf{-}$\textbf{09}$&$2.55\mathrm{e}$+$00$&$1.92\mathrm{e}$+$00$&$1.55\mathrm{e}$+$00$\\
				\hline
				Salomon&rate&$\textbf{100\%}$&$\textbf{100\%}$&$\textbf{100\%}$&$11\%$&$13\%$&$37\%$\\
				$(d=3)$&sol-err&$\textbf{7.17}\mathrm{\textbf{e}}$\bf{-}$\textbf{13}$
				&$\textbf{7.44}\mathrm{\textbf{e}}$\bf{-}$\textbf{13}$&$\textbf{6.56}\mathrm{\textbf{e}}$\bf{-}$\textbf{13}$&$5.97\mathrm{e}$-$01$&$6.96\mathrm{e}$-$01$&$2.99\mathrm{e}$-$01$\\
				~&fun-err&$\textbf{8.44}\mathrm{\textbf{e}}$\bf{-}$\textbf{08}$
				&$\textbf{8.51}\mathrm{\textbf{e}}$\bf{-}$\textbf{08}$&$\textbf{8.03}\mathrm{\textbf{e}}$\bf{-}$\textbf{08}$&$6.96\mathrm{e}$-$02$&$7.04\mathrm{e}$-$02$&$3.90\mathrm{e}$-$02$\\
				\hline
				Salomon&rate&$\textbf{100\%}$&$\textbf{100\%}$&$\textbf{100\%}$&$0\%$&$0\%$&$0\%$\\
				$(d=10)$&sol-err&$\textbf{2.44}\mathrm{\textbf{e}}$\bf{-}$\textbf{12}$
				&$\textbf{2.29}\mathrm{\textbf{e}}$\bf{-}$\textbf{12}$&$\textbf{2.20}\mathrm{\textbf{e}}$\bf{-}$\textbf{12}$&$3.89\mathrm{e}$+$00$&$3.59\mathrm{e}$+$00$&$2.19\mathrm{e}$+$00$\\
				~&fun-err&$\textbf{1.52}\mathrm{\textbf{e}}$\bf{-}$\textbf{07}$
				&$\textbf{1.48}\mathrm{\textbf{e}}$\bf{-}$\textbf{07}$&$\textbf{1.49}\mathrm{\textbf{e}}$\bf{-}$\textbf{07}$&$1.90\mathrm{e}$-$01$&$1.80\mathrm{e}$-$01$&$1.40\mathrm{e}$-$01$\\
				\hline
				Griewank&rate&$\textbf{89\%}$&$\textbf{90\%}$&$\textbf{96\%}$&$5\%$&$17\%$&$31\%$\\
				$(d=3)$&sol-err&$\textbf{1.11}\mathrm{\textbf{e}}$\bf{-}$\textbf{07}$
				&$\textbf{1.50}\mathrm{\textbf{e}}$\bf{-}$\textbf{07}$&$\textbf{6.32}\mathrm{\textbf{e}}$\bf{-}$\textbf{07}$&$1.72\mathrm{e}$+$01$&$1.11\mathrm{e}$+$01$&$1.35\mathrm{e}$+$01$\\
				~&fun-err&$\textbf{2.41}\mathrm{\textbf{e}}$\bf{-}$\textbf{08}$
				&$\textbf{4.11}\mathrm{\textbf{e}}$\bf{-}$\textbf{08}$&$\textbf{1.71}\mathrm{\textbf{e}}$\bf{-}$\textbf{09}$&$6.05\mathrm{e}$-$01$&$3.03\mathrm{e}$-$01$&$2.03\mathrm{e}$-$01$\\
				\hline
				Griewank&rate&$\textbf{85\%}$&$\textbf{91\%}$&$\textbf{98\%}$&$0\%$&$0\%$&$0\%$\\
				$(d=10)$&sol-err&$\textbf{1.94}\mathrm{\textbf{e}}$\bf{+}$\textbf{00}$
				&$\textbf{9.59}\mathrm{\textbf{e}}$\bf{-}$\textbf{06}$&$\textbf{7.04}\mathrm{\textbf{e}}$\bf{-}$\textbf{08}$&$2.24\mathrm{e}$+$01$&$5.18\mathrm{e}$+$01$&$2.20\mathrm{e}$+$01$\\
				~&fun-err&$\textbf{2.01}\mathrm{\textbf{e}}$\bf{-}$\textbf{01}$
				&$\textbf{4.92}\mathrm{\textbf{e}}$\bf{-}$\textbf{08}$&$\textbf{2.22}\mathrm{\textbf{e}}$\bf{-}$\textbf{11}$&$1.21\mathrm{e}$+$00$&$6.11\mathrm{e}$-$01$&$6.08\mathrm{e}$-$01$\\
				\hline
				Ackley&rate&$\textbf{100\%}$&$\textbf{100\%}$&$\textbf{100\%}$&$4\%$&$37\%$&$64\%$\\
				$(d=3)$&sol-err&$\textbf{7.50}\mathrm{\textbf{e}}$\bf{-}$\textbf{13}$
				&$\textbf{7.50}\mathrm{\textbf{e}}$\bf{-}$\textbf{13}$&$\textbf{7.25}\mathrm{\textbf{e}}$\bf{-}$\textbf{13}$&$8.72\mathrm{e}$-$02$&$1.23\mathrm{e}$-$03$&$3.77\mathrm{e}$-$04$\\
				~&fun-err&$\textbf{2.00}\mathrm{\textbf{e}}$\bf{-}$\textbf{06}$
				&$\textbf{2.00}\mathrm{\textbf{e}}$\bf{-}$\textbf{06}$&$\textbf{1.96}\mathrm{\textbf{e}}$\bf{-}$\textbf{06}$&$4.49\mathrm{e}$-$01$&$7.50\mathrm{e}$-$02$&$3.20\mathrm{e}$-$02$\\
				\hline
				Xin-She Yang $4$ &rate&$\textbf{94\%}$&$\textbf{100\%}$&$\textbf{100\%}$&$20\%$&$44\%$&$55\%$\\
				$(d=3)$&sol-err&$\textbf{9.87}\mathrm{\textbf{e}}$\bf{-}$\textbf{01}$
				&$\textbf{7.50}\mathrm{\textbf{e}}$\bf{-}$\textbf{13}$&$\textbf{7.50}\mathrm{\textbf{e}}$\bf{-}$\textbf{13}$&$3.95\mathrm{e}$+$00$&$1.17\mathrm{e}$-$03$&$3.67\mathrm{e}$-$03$\\
				~&fun-err&$\textbf{6.00}\mathrm{\textbf{e}}$\bf{-}$\textbf{02}$
				&$\textbf{1.50}\mathrm{\textbf{e}}$\bf{-}$\textbf{06}$&$\textbf{1.50}\mathrm{\textbf{e}}$\bf{-}$\textbf{06}$&$2.42\mathrm{e}$-$01$&$2.70\mathrm{e}$-$02$&$2.49\mathrm{e}$-$02$\\
				\hline
				Bartels Conn &rate&$\textbf{85\%}$&$\textbf{91\%}$&$\textbf{100\%}$&$7\%$&$26\%$&$33\%$\\
				~&sol-err&$\textbf{3.57}\mathrm{\textbf{e}}$\bf{-}$\textbf{06}$
				&$\textbf{7.05}\mathrm{\textbf{e}}$\bf{-}$\textbf{06}$&$\textbf{4.41}\mathrm{\textbf{e}}$\bf{-}$\textbf{07}$&$6.00\mathrm{e}$-$02$&$4.20\mathrm{e}$-$03$&$8.39\mathrm{e}$-$03$\\
				~&fun-err&$\textbf{6.78}\mathrm{\textbf{e}}$\bf{-}$\textbf{07}$
				&$\textbf{4.52}\mathrm{\textbf{e}}$\bf{-}$\textbf{07}$&$\textbf{7.21}\mathrm{\textbf{e}}$\bf{-}$\textbf{07}$&$4.68\mathrm{e}$-$02$&$2.12\mathrm{e}$-$03$&$4.21\mathrm{e}$-$03$\\
				\hline
				Schaffer $4$ &rate&$\textbf{93\%}$&$\textbf{100\%}$&$\textbf{100\%}$&$0\%$&$18\%$&$47\%$\\
				~&sol-err&$\textbf{3.29}\mathrm{\textbf{e}}$\bf{-}$\textbf{05}$
				&$\textbf{3.61}\mathrm{\textbf{e}}$\bf{-}$\textbf{06}$&$\textbf{3.10}\mathrm{\textbf{e}}$\bf{-}$\textbf{06}$&$1.94\mathrm{e}$-$02$&$2.90\mathrm{e}$-$03$&$1.05\mathrm{e}$-$04$\\
				~&fun-err&$\textbf{3.38}\mathrm{\textbf{e}}$\bf{-}$\textbf{07}$
				&$\textbf{3.65}\mathrm{\textbf{e}}$\bf{-}$\textbf{07}$&$\textbf{3.57}\mathrm{\textbf{e}}$\bf{-}$\textbf{07}$&$1.57\mathrm{e}$-$05$&$2.92\mathrm{e}$-$06$&$3.89\mathrm{e}$-$07$\\
				\hline
		\end{tabular}}
	\end{center}
\end{table*}
\begin{table*}[!thb]
	\caption{Comparisons between ESCBO and vanilla-CBO with initial particles of different distributions} \label{com2}
	\begin{center}
		\renewcommand\arraystretch{1.2}
		\resizebox{\textwidth}{!}{
			\begin{tabular}{llllllll} \hline
				&&\multicolumn {3}{c}{ESCBO}&\multicolumn {3}{c}{vanilla-CBO}\\
				\cmidrule(lr{0pt}){3-5}\cmidrule(lr{0pt}){6-8}
				&&$\mathcal{U}(-3,3)$&$\mathcal{U}(2,6)$&$\mathcal{N}_d(0,3)$&$\mathcal{U}(-3,3)$&$\mathcal{U}(2,6)$&$\mathcal{N}_d(0,3)$\\
				\hline
				Rastrigin&rate&$\textbf{100\%}$&$\textbf{100\%}$&$\textbf{100\%}$&$91\%$&$0\%$&$74\%$\\
				$(d=2)$&sol-err&$\textbf{6.03}\mathrm{\textbf{e}}$\bf{-}$\textbf{12}$
				&$\textbf{4.87}\mathrm{\textbf{e}}$\bf{-}$\textbf{12}$&$\textbf{4.79}\mathrm{\textbf{e}}$\bf{-}$\textbf{12}$&$6.15\mathrm{e}$-$05$&$4.15\mathrm{e}$+$00$&$2.30\mathrm{e}$-$05$\\
				~&fun-err&$\textbf{5.98}\mathrm{\textbf{e}}$\bf{-}$\textbf{10}$
				&$\textbf{4.83}\mathrm{\textbf{e}}$\bf{-}$\textbf{10}$&$\textbf{4.75}\mathrm{\textbf{e}}$\bf{-}$\textbf{10}$&$6.08\mathrm{e}$-$03$&$2.48\mathrm{e}$+$00$&$2.28\mathrm{e}$-$03$\\
				\hline
				Salomon&rate&$\textbf{100\%}$&$\textbf{100\%}$&$\textbf{100\%}$&$92\%$&$0\%$&$90\%$\\
				$(d=2)$&sol-err&$\textbf{4.65}\mathrm{\textbf{e}}$\bf{-}$\textbf{13}$
				&$\textbf{4.73}\mathrm{\textbf{e}}$\bf{-}$\textbf{13}$&$\textbf{4.90}\mathrm{\textbf{e}}$\bf{-}$\textbf{13}$&$6.22\mathrm{e}$-$06$&$3.69\mathrm{e}$+$00$&$5.03\mathrm{e}$-$06$\\
				~&fun-err&$\textbf{6.76}\mathrm{\textbf{e}}$\bf{-}$\textbf{08}$
				&$\textbf{6.86}\mathrm{\textbf{e}}$\bf{-}$\textbf{08}$&$\textbf{6.99}\mathrm{\textbf{e}}$\bf{-}$\textbf{08}$&$2.22\mathrm{e}$-$04$&$1.90\mathrm{e}$-$01$&$1.71\mathrm{e}$-$04$\\
				\hline
				Griewank&rate&$\textbf{100\%}$&$\textbf{82\%}$&$\textbf{100\%}$&$87\%$&$0\%$&$74\%$\\
				$(d=2)$&sol-err&$\textbf{4.93}\mathrm{\textbf{e}}$\bf{-}$\textbf{13}$
				&$\textbf{2.81}\mathrm{\textbf{e}}$\bf{-}$\textbf{07}$&$\textbf{4.65}\mathrm{\textbf{e}}$\bf{-}$\textbf{13}$&$1.32\mathrm{e}$-$05$&$3.00\mathrm{e}$+$01$&$8.33\mathrm{e}$-$05$\\
				~&fun-err&$\textbf{1.21}\mathrm{\textbf{e}}$\bf{-}$\textbf{13}$
				&$\textbf{1.41}\mathrm{\textbf{e}}$\bf{-}$\textbf{07}$&$\textbf{1.07}\mathrm{\textbf{e}}$\bf{-}$\textbf{13}$&$1.10\mathrm{e}$-$06$&$1.91\mathrm{e}$+$00$&$5.28\mathrm{e}$-$07$\\
				\hline
				Ackley&rate&$\textbf{100\%}$&$\textbf{100\%}$&$\textbf{100\%}$&$95\%$&$0\%$&$88\%$\\
				$(d=2)$&sol-err&$\textbf{5.00}\mathrm{\textbf{e}}$\bf{-}$\textbf{13}$
				&$\textbf{5.00}\mathrm{\textbf{e}}$\bf{-}$\textbf{13}$&$\textbf{5.00}\mathrm{\textbf{e}}$\bf{-}$\textbf{13}$&$2.55\mathrm{e}$-$05$&$2.53\mathrm{e}$+$00$&$4.48\mathrm{e}$-$04$\\
				~&fun-err&$\textbf{2.00}\mathrm{\textbf{e}}$\bf{-}$\textbf{06}$
				&$\textbf{2.00}\mathrm{\textbf{e}}$\bf{-}$\textbf{06}$&$\textbf{2.00}\mathrm{\textbf{e}}$\bf{-}$\textbf{06}$&$5.20\mathrm{e}$-$03$&$4.78\mathrm{e}$+$00$&$3.13\mathrm{e}$-$02$\\
				\hline
				Xin-She Yang $4$ &rate&$\textbf{100\%}$&$\textbf{62\%}$&$\textbf{100\%}$&$84\%$&$0\%$&$92\%$\\
				$(d=2)$&sol-err&$\textbf{5.00}\mathrm{\textbf{e}}$\bf{-}$\textbf{13}$
				&$\textbf{2.27}\mathrm{\textbf{e}}$\bf{+}$\textbf{01}$&$\textbf{5.00}\mathrm{\textbf{e}}$\bf{-}$\textbf{13}$&$1.23\mathrm{e}$-$06$&$2.76\mathrm{e}$+$01$&$4.02\mathrm{e}$-$05$\\
				~&fun-err&$\textbf{1.00}\mathrm{\textbf{e}}$\bf{-}$\textbf{06}$
				&$\textbf{5.00}\mathrm{\textbf{e}}$\bf{-}$\textbf{01}$&$\textbf{1.00}\mathrm{\textbf{e}}$\bf{-}$\textbf{06}$&$5.48\mathrm{e}$-$04$&$1.00\mathrm{e}$+$00$&$2.06\mathrm{e}$-$03$\\
				\hline
				Bartels Conn &rate&$\textbf{100\%}$&$\textbf{94\%}$&$\textbf{85\%}$&$37\%$&$0\%$&$25\%$\\
				~&sol-err&$\textbf{3.83}\mathrm{\textbf{e}}$\bf{-}$\textbf{07}$
				&$\textbf{5.40}\mathrm{\textbf{e}}$\bf{-}$\textbf{07}$&$\textbf{4.07}\mathrm{\textbf{e}}$\bf{-}$\textbf{07}$&$3.79\mathrm{e}$-$03$&$3.42\mathrm{e}$+$00$&$4.45\mathrm{e}$-$03$\\
				~&fun-err&$\textbf{6.91}\mathrm{\textbf{e}}$\bf{-}$\textbf{07}$
				&$\textbf{7.70}\mathrm{\textbf{e}}$\bf{-}$\textbf{07}$&$\textbf{6.12}\mathrm{\textbf{e}}$\bf{-}$\textbf{06}$&$1.90\mathrm{e}$-$03$&$5.16\mathrm{e}$+$00$&$2.23\mathrm{e}$-$03$\\
				\hline
				Schaffer $4$ &rate&$\textbf{100\%}$&$\textbf{100\%}$&$\textbf{100\%}$&$45\%$&$0\%$&$33\%$\\
				~&sol-err&$\textbf{2.64}\mathrm{\textbf{e}}$\bf{-}$\textbf{06}$
				&$\textbf{7.95}\mathrm{\textbf{e}}$\bf{-}$\textbf{06}$&$\textbf{1.30}\mathrm{\textbf{e}}$\bf{-}$\textbf{04}$&$3.53\mathrm{e}$-$04$&$1.64\mathrm{e}$+$00$&$6.84\mathrm{e}$-$03$\\
				~&fun-err&$\textbf{3.57}\mathrm{\textbf{e}}$\bf{-}$\textbf{07}$
				&$\textbf{3.61}\mathrm{\textbf{e}}$\bf{-}$\textbf{07}$&$\textbf{4.05}\mathrm{\textbf{e}}$\bf{-}$\textbf{07}$&$4.43\mathrm{e}$-$07$&$1.13\mathrm{e}$-$03$&$5.66\mathrm{e}$-$06$\\
				\hline
		\end{tabular}}
	\end{center}
\end{table*}

\subsection{Application on training Deep Neural Networks (DNNs)}\label{sec7.3}
In this section, we consider an optimization problem from machine learning \cite{Cui2020,Liu2023}. The architecture of DNNs and the selection of activation functions often lead to two prevalent issues in deep learning: gradient explosion and gradient vanishing. These issues impact both the effectiveness and the convergence rates of gradient-based methods. Nevertheless, the derivative-free algorithm is an effective choice for solving the problem where the gradient cannot be calculated explicitly. Training a DNN can be formulated as the following nonconvex finite-sum minimization problem:
\begin{equation}\label{dnn}
	\min_{W,b} \frac{1}{M}\sum_{m=1}^M\left\| \sigma(W_L\sigma(\cdots\sigma(W_1u_m+b_1)+\cdots)+b_L)-v_m\right\|^2, 
\end{equation}
where $\left\lbrace u_m\in\mathbb{R}^{N_0}\right\rbrace_{m=1}^M$ and $\left\lbrace v_m\in\mathbb{R}^{N_L}\right\rbrace_{m=1}^M$ are the given datasets, respectively, and $\sigma$ is a sigmoid activation function, i.e. $\sigma(s):=\frac{1}{1+\mathrm{e}^{-s}}$. The variables $\{W_\ell\in\mathbb{R}^{N_\ell\times N_{\ell-1}}\}_{\ell=1}^L$ and $\{b_\ell\in\mathbb{R}^{N_\ell}\}_{\ell=1}^L$ stand for the weight matrices and bias vectors, respectively, to be determined from the computation. The depth of this network is specified by the number of layers $L$, and the width is determined by the number of neurons $N_\ell$ at the $\ell$-th layer. 

Let $M=80$ and $M_{test}=20$ be the number of total training data and test data, respectively. The procedure for constructing a class of synthetic datasets is outlined below. We randomly generate the elements of weight matrices and bias vectors under distribution $\mathcal{N}(0,0.8)$ and the input data $\left\lbrace u_m\right\rbrace_{m=1}^{M+M_{test}}$ with $u_m\sim\mathcal{N}(a,\Sigma\Sigma^\top)$, where $a\in\mathbb{R}^{N_0}$ and $\Sigma\in\mathbb{R}^{N_0}$ are random vectors with standard Gaussian components. Then we obtain the output data $\left\lbrace v_m\right\rbrace_{m=1}^{M+M_{test}}$ by
\begin{equation*}
	v_m=\sigma(W_L\sigma(\cdots\sigma(W_1u_m+b_1)+\cdots)+b_L)+\tilde{v}_m,
\end{equation*}
where $\tilde{v}_m$ is the noise generated with the distribution $\mathcal{N}(0,0.0025)$. To assess the quality of the solution, it is common to evaluate it based on the following training error (\textbf{TrainErr}) and test error (\textbf{TestErr}):
\begin{equation*}
	\mbox{\textbf{TrainErr}}=\frac{1}{M}\sum_{m=1}^M \left\| \sigma(W_L\sigma(\cdots\sigma(W_1u_m+b_1)+\cdots)+b_L)-v_m\right\|^2, 
\end{equation*}
\begin{equation*}
	\mbox{\textbf{TestErr}}=\frac{1}{M_{test}}\sum_{m=M+1}^{M+M_{test}} \left\| \sigma(W_L\sigma(\cdots\sigma(W_1u_m+b_1)+\cdots)+b_L)-v_m\right\|^2.
\end{equation*}

Note that for high dimensional problems, such as those in deep learning, the computational cost of approximating gradient in the ESCBO algorithm is too high, if we perform \eqref{fd} for every particle. Hence, we introduce a new variant of the ESCBO algorithm with random mini-batch ideas to save the computational time and memory in practice. Specifically, we choose a sub-sample $S_k\subset[N]$ uniformly at random in each iteration, and then compute the gradient estimator by
\begin{equation}\label{fd1}
	g^{i,k}:=\begin{cases}
		\displaystyle 
		g(x^{i,k}),&\mbox{if} ~i\in S_k,\\
		\displaystyle 
		0,&\mbox{otherwise},
	\end{cases}
\end{equation}
where function $g$ is defined as in \eqref{fd}. We summarize the fast ESCBO (FESCBO) algorithm in Appendix \ref{secA1}.
We apply the FESCBO algorithm to solve \eqref{dnn} with the following parameters:
\begin{equation*}
	N=100, \quad\lambda=\delta=1,\quad \beta=1\mathrm{e}\mbox{\rm{+}}20,\quad \sigma=0.001, \quad b_k=10,\quad \alpha_k=0.99^k.
\end{equation*}
Initial points are sampled from a uniform distribution on $[-3,3]^d$ and the stopping criteria is set as in \eqref{stop1}. 

 \textbf{TrainErr} and \textbf{TestErr} averaged over $100$ simulations for training the DNNs with different architectures are listed in Table \ref{table4}. The dimensionality $d$ of variables in \eqref{dnn} can be calculated by $d=\sum_{l=1}^L N_{l-1}N_{l}+\sum_{l=1}^L N_l$. As the DNN expands in depth or width, $d$ significantly increases, leading to a multitude of potential local minimizers. As demonstrated in Table \ref{table4}, even if $d$ is large, the FESCBO algorithm still exhibits good performance on solving \eqref{dnn} in terms of both training and test errors.

\begin{table}[h!]
	\caption{Results of the FESCBO algorithm for training DNNs with different architectures}\label{table4}
	\centering
	\renewcommand\arraystretch{1.5}
	\begin{tabular}{ccccccccc} 
		\hline
		\multicolumn {6}{c}{DNN architecture}&\multirow{2}{*}{\makecell{Total dimensionality $d$	\\of the variables}} &\multirow{2}{*}{\textbf{TrainErr}}&\multirow{2}{*}{\textbf{TestErr}}\\
		\cmidrule(lr{0pt}){1-6}
		$L$&$N_0$&$N_1$&$N_2$&$N_3$&$N_4$\\

		\hline
		$2$&$5$&$10$&$1$&$0$&$0$&$71$&$3.57\mathrm{e}$-$05$&$5.28\mathrm{e}$-$05$\\
		$4$&$5$&$5$&$5$&$5$&$1$&$96$&$6.80\mathrm{e}$-$07$&$8.30\mathrm{e}$-$07$\\
		$4$&$5$&$10$&$10$&$10$&$1$&$291$&$3.25\mathrm{e}$-$07$&$6.10\mathrm{e}$-$07$\\
		\hline
		$2$&$10$&$10$&$1$&$0$&$0$&$121$&$2.18\mathrm{e}$-$04$&$5.76\mathrm{e}$-$04$\\
		$4$&$10$&$5$&$5$&$5$&$1$&$121$&$4.91\mathrm{e}$-$06$&$9.36\mathrm{e}$-$06$\\
		$4$&$10$&$10$&$10$&$10$&$1$&$341$&$1.22\mathrm{e}$-$05$&$1.73\mathrm{e}$-$05$\\
		\hline
		\end{tabular}
	\end{table}

\section{Conclusion}\label{sec8}
In this paper, we aim to find the global minimizer of a nonsmooth nonconvex optimization problem, where we focus on the cases that the gradient of the objective function can not be explicitly available. Inspired by this, we propose a novel algorithm with two schemes, where one is to use the CBO method to guarantee its efficiency for finding the global minimizer, and the other is to introduce an approximated gradient descent scheme to improve its robustness and accelerate its convergence rate. 
Unlike most existing work on the CBO algorithms, we rigorously analyze the convergence of the proposed algorithm to the global minimizer  without using mean-field limit model. To be more specific, we prove the convergence of the particles to a global consensus point and give an error estimate toward the global minimum as an optimality result of this consensus point. Moreover, we provide a sufficient condition on the objective function to guarantee the iteration complexity of the proposed algorithm, where we give that the expected mean square distance between the particles and the global minimizer can reach the accuracy threshold of $\varepsilon$ after at most $\mathcal{O}\left( \log(\varepsilon^{-1})\right)$ iterations. Finally, several experimental results demonstrate that, in comparison to the vanilla-CBO method, the proposed algorithm exhibits an enhanced ability to find a more accurate minimizer with a higher success rate, and has better practicability in training DNNs. 

\backmatter

\bmhead{Acknowledgements}

 The research of Wei Bian is partially supported by the National Natural Science Foundation of China grants (12425115, 12271127, 62176073). The research of Fan Wu is partially supported by the Postdoctoral Fellowship Program of CPSF under Grant Number GZC20233475.

\section*{Declarations}
\textbf{Data availability}  ~The data are available from the corresponding author on reasonable request.\\
\textbf{Conflict of interest} ~The authors declare that there is no Conflict of interest.

%


\begin{appendices}
	
	\section{Fast ESCBO algorithm}\label{secA1}
	\begin{algorithm}
		\caption{Fast Extra-Step Consensus-Based Optimization (FESCBO) algorithm}
		\begin{algorithmic}[1]
			\Require
			the number of particles $N$, initial points $x^{i,0}\in\mathbb{R}^d$, $i\in[N]$,
			parameters $\beta>0$, $\lambda> 0$, $\delta\geq0$, $\sigma>0$, mini-batch sizes $\{b_k\}$ and nonnegative step sizes $\left\lbrace \alpha_k\right\rbrace $.\\
			Set $k=0$.
			\While{a termination criterion is not met,}
			\State Compute the weighted average point by
			\begin{equation*}
				\bar{x}^{\star,k}=\left( \bar{x}^{\star,k}_1,\ldots,\bar{x}^{\star,k}_d\right)^\top =\frac{\sum_{i=1}^N x^{i,k}\mathrm{e}^{-\beta f(x^{i,k})}}{\sum_{i=1}^N \mathrm{e}^{-\beta f(x^{i,k})}}.
			\end{equation*}
			\State Generate a random vector $\eta^k=\left(\eta^k_1,\ldots,\eta^k_d \right)^\top $ whose components are i.i.d. and satisfies
			\begin{equation*}
				\eta_l^k\sim\mathcal{N}(0,\delta^2),\quad l\in[d].
			\end{equation*}
			\State Choose a random sample set $S_k\subset[N]$ with $|S_k|=b_k$ and compute the gradient estimator $g^{i,k}$ for $i\in[N]$ by \eqref{fd1}.
			\State Update $x^{i,k+1}$ for $i\in[N]$ by
			\begin{align*}
				&y^{i,k+1}=x^{i,k}-\lambda\left( x^{i,k}-\bar{x}^{\star,k}\right) -\left( x^{i,k}-\bar{x}^{\star,k}\right) \odot\eta^k, \\ 
				&x^{i,k+1}=y^{i,k+1}-\alpha_{k}g^{i,k}.
			\end{align*}
			\State Set $k=k+1$.
			\EndWhile
			\Ensure
			$x^{i,k+1}$, $i\in[N]$.
		\end{algorithmic}
	\end{algorithm}
	
		
\end{appendices}


\bibliography{sn-bibliography}

\end{document}